\theoremstyle{definition}
\newtheorem{defn}{Definition}[section]
\newtheorem{rem}[defn]{Remark}
\theoremstyle{plain}
\newtheorem{thm}[defn]{Theorem}
\newtheorem{prop}[defn]{Proposition}
\newtheorem{lem}[defn]{Lemma}
\numberwithin{equation}{section}
\title[Presentation for mapping class group]{An infinite presentation for the mapping class group of a non-orientable surface with boundary}
\author[R.~Kobayashi]{Ryoma Kobayashi}
\address{
(Ryoma Kobayashi)
Department of General Education, Ishikawa National College of Technology, Tsubata, Ishikawa, 929-0392, Japan 
}
\email{kobayashi\_ryoma@ishikawa-nct.ac.jp}
\author[G.~Omori]{Genki Omori}
\address{
(Genki Omori)
Department of Mathematics,
Tokyo Institute of Technology,
Oh-okayama, Meguro, Tokyo 152-8551, Japan
}
\email{omori.g.aa@m.titech.ac.jp}
\date{\today}
\begin{document}
\maketitle
\begin{abstract}
We give an infinite presentation for the mapping class group of a non-orientable surface with boundary components. The presentation is a generalization of the presentation given by the second author~\cite{Omori}. 
\end{abstract}

\section{Introduction}

Let $\Sigma _{g,n}$ be a compact connected oriented surface of genus $g\geq 0$ with $n\geq 0$ boundary components. The {\it mapping class group} $\mathcal{M}(\Sigma _{g,n})$ of $\Sigma _{g,n}$ is the group of isotopy classes of orientation preserving self-diffeomorphisms on $\Sigma _{g,n}$ fixing the boundary pointwise. A finite presentation for $\mathcal{M}(\Sigma _{g,n})$ was given by Hatcher-Thurston~\cite{Hatcher-Thurston}, Harer~\cite{Harer}, Wajnryb~\cite{Wajnryb}, Gervais~\cite{Gervais2} and Labru\`ere-Paris~\cite{Labruere-Paris}. Gervais~\cite{Gervais} obtained an infinite presentation for $\mathcal{M}(\Sigma _{g,n})$ by using Wajnryb's finite presentation for $\mathcal{M}(\Sigma _{g,n})$, and Luo~\cite{Luo} rewrote Gervais' presentation into a simpler infinite presentation (see Theorem~\ref{pres_Gervais}).

Let $N_{g,n}$ be a compact connected non-orientable surface of genus $g\geq 1$ with $n\geq 0$ boundary components. The surface $N_g=N_{g,0}$ is a connected sum of $g$ real projective planes. The mapping class group $\mathcal{M}(N_{g,n})$ of $N_{g,n}$ is the group of isotopy classes of self-diffeomorphisms on $N_{g,n}$ fixing the boundary pointwise. For $g\geq 2$ and $n\in \{ 0,1\}$, a finite presentation for $\mathcal{M}(N_{g,n})$ was given by Lickorish~\cite{Lickorish1}, Birman-Chillingworth~\cite{Birman-Chillingworth}, Stukow~\cite{Stukow1} and Paris-Szepietowski~\cite{Paris-Szepietowski}. Note that $\mathcal{M}(N_1)$ and $\mathcal{M}(N_{1,1})$ are trivial (see \cite[Theorem~3.4]{Epstein}) and $\mathcal{M}(N_2)$ is finite (see \cite[Lemma~5]{Lickorish1}). Stukow~\cite{Stukow3} rewrote Paris-Szepietowski's presentation into a finite presentation with Dehn twists and a ``Y-homeomorphism'' as generators (see Theorem~\ref{thm_Stukow}). The second author~\cite{Omori} gave a simple infinite presentation for $\mathcal{M}(N_{g,n})$ for $g\geq 1$ and $n\in \{ 0,1\}$. The generating set consists of all Dehn twits and all ``crosscap pushing maps'' along simple loops. We review the crosscap pushing map in Section~\ref{Preliminaries}.

In this paper, we give a simple infinite presentation for $\mathcal{M}(N_{g,n})$ for arbitrary $g\geq 0$ and $n\geq 0$ (Theorem~\ref{main-thm}). The presentation is a generalization of the presentation given by the second author~\cite{Omori}. We will prove Theorem~\ref{main-thm} by applying Gervais' argument to a finite presentation for $\mathcal{M}(N_{g,n})$ in Proposition~\ref{thm_finite_presentation}. 

Contents of this paper are as follows. In Section~\ref{Preliminaries}, we prepare some elements of $\mathcal{M}(N_{g,n})$ and some relations among their elements in $\mathcal{M}(N_{g,n})$, and review the infinite presentation for $\mathcal{M}(\Sigma _{g,n})$ (Theorem~\ref{pres_Gervais}) which is an improvement by Luo~\cite{Luo} of Gervais' presentation in \cite{Gervais}. In Section~\ref{section_finitepres}, we review Stukow's finite presentation for $\mathcal{M}(N_{g,n})$ when $n\in \{ 0,1\}$ (Theorem~\ref{thm_Stukow}) and give a finite presentation for $\mathcal{M}(N_{g,n})$ when $n\geq 2$ (Proposition~\ref{thm_finite_presentation}). In the proof of the main theorem in Section~\ref{section-mainthm}, we use their finite presentations for $\mathcal{M}(N_{g,n})$. In Section~\ref{section-mainthm}, we give the main theorem (Theorem~\ref{main-thm}) in this paper and a proof of the main theorem. Finally, in Section~\ref{section_finite_presentation}, we give a proof of Proposition~\ref{thm_finite_presentation}.

\section{Preliminaries}\label{Preliminaries}

\subsection{Relations among Dehn twists and Gervais' presentation}
Let $S$ be either $N_{g,n}$ or $\Sigma _{g,n}$. We denote by $\mathcal{N}_{S}(A)$ a regular neighborhood of a subset $A$ in $S$. 
We assume that every simple closed curve on $S$ is oriented throughout this paper, and for simple closed curves $c_1$, $c_2$ on $S$, $c_1=c_2$ means $c_1$ is isotopic to $c_2$ in consideration of their orientations. Denote by $c^{-1}$ the inverse curve of a simple closed curve $c$ on $S$. Note that $(c^{-1})^{-1}=c$. For a two-sided simple closed curve $c$ on $S$, we can take two orientations $+_c$ and $-_c$ of $\mathcal{N}_{S}(c)$. When $S$ is orientable, we take $+_c$ as the orientation of $\mathcal{N}_{S}(c)$ which is induced by the orientation of $S$. Then for a two-sided simple closed curve $c$ on $S$ and an orientation $\theta \in \{ +_c, -_c\}$ of $\mathcal{N}_{S}(c)$, denote by $t_{c;\theta }$ the right-handed Dehn twist along $c$ on $S$ with respect to $\theta $. Note that $t_{c;+_c}=t_{c^{-1};+_c}=t_{c;-_c}^{-1}$.
For some convenience, we write $t_c=t_{c;+_c}$ for a two-sided simple closed curve $c$, where orientation of $\mathcal{N}_{S}(c)$ is given explicitly (for instance, $S$ is an orientable surface). In particular, for a given explicit two-sided simple closed curve, an arrow on a side of the simple closed curve indicates the direction of the Dehn twist (see Figure~\ref{dehntwist}). For elements $f=[\varphi ]$, $h=[\psi ]\in \mathcal{M}(S)$, we define $fh:=[\varphi \circ \psi ]\in \mathcal{M}(S)$.

\begin{figure}[h]
\includegraphics[scale=0.65]{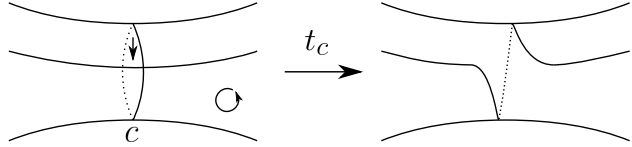}
\caption{The right-handed Dehn twist $t_c=t_{c;\theta }$ along a two-sided simple closed curve $c$ on $S$ with respect to the orientation $\theta \in \{ +_c, -_c\}$ of $\mathcal{N}_{S}(c)$ as in the figure.}\label{dehntwist}
\end{figure}

Recall the following relations in $\mathcal{M}(S)$ among Dehn twists along two-sided simple closed curves on $S$.

\begin{lem}\label{trivial}
Let $c$ be a two-sided simple closed curve $c$ on $S$ and $\theta \in \{ +_c, -_c\}$ an orientation of $\mathcal{N}_{S}(c)$. If $c$
bounds a disk or a M\"{o}bius band in $S$, then we have $t_{c;\theta }=1$ in $\mathcal{M}(S)$.
\end{lem} 

For a two-sided simple closed curve $c$ on $S$ and $f\in \mathcal{M}(S)$, we have a bijection $f_\ast =(f|_{\mathcal{N}_{S}(c)})_\ast :\{ +_c, -_c\}\rightarrow \{ +_{f(c)}, -_{f(c)}\}$.

\begin{lem}[The braid relation (i)]\label{braid1}
For a two-sided simple closed curve $c$ on $S$ and $f\in \mathcal{M}(S)$, we have 
\begin{eqnarray*}
t_{f(c);\theta ^\prime } &=& ft_{c;\theta }f^{-1} \hspace{0.5cm}\text{when }f_\ast (\theta )=\theta ^\prime ,\\
t_{f(c);\theta ^\prime }^{-1} &=& ft_{c;\theta }f^{-1} \hspace{0.5cm}\text{when }f_\ast (\theta )\not =\theta ^\prime .
\end{eqnarray*}
\end{lem}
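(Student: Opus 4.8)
The plan is to deduce both displayed formulas from the single conjugation identity
\[
f\, t_{c;\theta}\, f^{-1} = t_{f(c);\, f_\ast (\theta )},
\]
and then to use the relation $t_{c;+_c}=t_{c;-_c}^{-1}$ recalled just before the statement to pass between the two orientations of $\mathcal{N}_{S}(f(c))$. So the whole content reduces to establishing this one identity, which is the non-orientable (local-orientation) version of the classical change-of-coordinates principle for Dehn twists.

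First I would fix a representative $\varphi$ of $f$ and a representative $T_{c;\theta}$ of $t_{c;\theta}$ that is the identity outside the annular neighborhood $\mathcal{N}_{S}(c)$ and equals the right-handed twist on $\mathcal{N}_{S}(c)$ defined with respect to the orientation $\theta$. The conjugate $\varphi \circ T_{c;\theta}\circ \varphi ^{-1}$ is then the identity outside $\varphi (\mathcal{N}_{S}(c))$, and $\varphi (\mathcal{N}_{S}(c))$ is a regular neighborhood of $f(c)$, hence isotopic to $\mathcal{N}_{S}(f(c))$; thus the conjugate is supported near $f(c)$ and is a Dehn twist along $f(c)$. The essential point is the handedness: by the very definition of the bijection $f_\ast$, the restriction of $\varphi$ carries $(\mathcal{N}_{S}(c),\theta )$ to $(\mathcal{N}_{S}(f(c)),f_\ast (\theta ))$ respecting these local orientations, and a right-handed twist is conjugated by a local-orientation-preserving diffeomorphism to a right-handed twist. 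Passing to isotopy classes yields the identity above.

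Finally I would split into the two cases. When $f_\ast (\theta )=\theta ^\prime$, the identity is exactly the first line of the statement. When $f_\ast (\theta )\not =\theta ^\prime$, the orientation $\theta ^\prime$ is the one opposite to $f_\ast (\theta )$, so $t_{f(c);\theta ^\prime }=t_{f(c);\, f_\ast (\theta )}^{-1}$ by the recalled relation; inverting gives $t_{f(c);\theta ^\prime }^{-1}=t_{f(c);\, f_\ast (\theta )}=f\, t_{c;\theta }\, f^{-1}$, which is the second line. The step I expect to be the main obstacle is making the handedness claim precise: since $S$ may be non-orientable and $f$ need not preserve any global orientation, the argument cannot invoke an orientation of $S$ and must be phrased purely in terms of the chosen local orientation $\theta$ of the annulus $\mathcal{N}_{S}(c)$ and its image $f_\ast (\theta )$. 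This local bookkeeping is exactly what forces the sign to be governed by the comparison of $f_\ast (\theta )$ with $\theta ^\prime$.
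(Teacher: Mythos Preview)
Your argument is correct and is the standard proof of this fact. Note that the paper itself does not supply a proof of this lemma: it is listed among the ``recall the following relations'' statements and is stated without proof, as a well-known property of Dehn twists. So there is nothing to compare against; you have simply written out the expected justification, correctly handling the non-orientable case by working with the local orientation $\theta$ of $\mathcal{N}_{S}(c)$ and its pushforward $f_\ast(\theta)$ rather than any global orientation of $S$.
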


When $f$ in Lemma~\ref{braid1} is a Dehn twist $t_d$ along a two-sided simple closed curve $d$ and the geometric intersection number $|c\cap d|$ of $c$ and $d$ is $m$, we denote by $T_m$ the braid relation.

Let $c_1$, $c_2$, $\dots $, $c_k$ be two-sided simple closed curves on $S$. The sequence $c_1$, $c_2$, $\dots $, $c_k$ is a {\it k-chain on $S$} if $c_1$, $c_2$, $\dots $, $c_k$ satisfy $|c_i\cap c_{i+1}|=1$ for each $i=1$, $2$, $\dots $, $k-1$ and $|c_i\cap c_j|=0$ for $|j-i|>1$.

\begin{lem}[The $k$-chain relation]\label{chain}
Let $c_1$, $c_2$, $\dots $, $c_k$ be a $k$-chain on $S$ and let $\delta $, $\delta ^\prime $ (resp. $\delta $) be distinct boundary components (resp. the boundary component) of $\mathcal{N}_S(c_1\cup c_2\cup \cdots \cup c_k)$ when $k$ is odd (resp. even). We give an orientation $\theta \in \{ +_\delta ,-_\delta \}$ of $\mathcal{N}_S(\delta )$ and the orientation of $\mathcal{N}_S(c_1\cup c_2\cup \cdots \cup c_k)$ which is induced by $\theta $. Then we have
\begin{eqnarray*}
(t_{c_1;\theta _{c_1}}^{\varepsilon _{c_1}}t_{c_2;\theta _{c_2}}^{\varepsilon _{c_2}}\cdots t_{c_k;\theta _{c_k}}^{\varepsilon _{c_k}})^{k+1} &=& t_{\delta ;\theta }t_{\delta ^\prime ;\theta _{\delta ^\prime }}^{\varepsilon _{\delta ^\prime }} \hspace{0.5cm}\text{when}\ k\text{ is odd},\\
(t_{c_1;\theta _{c_1}}^{\varepsilon _{c_1}}t_{c_2;\theta _{c_2}}^{\varepsilon _{c_2}}\cdots t_{c_k;\theta _{c_k}}^{\varepsilon _{c_k}})^{2k+2} &=& t_{\delta ;\theta } \hspace{0.5cm}\text{when}\ k\text{ is even},
\end{eqnarray*}
where for $c\in \{ c_1, c_2, \dots , c_k, \delta ^\prime \}$, $\varepsilon _c=1$ if $\theta _c$ coincides with the orientation of $\mathcal{N}_S(c_1\cup c_2\cup \cdots \cup c_k)$, and $\varepsilon _c=-1$ otherwise.
\end{lem}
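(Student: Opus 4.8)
The plan is to reduce the statement to the classical chain relation on an \emph{orientable} surface. The key geometric observation is that the regular neighborhood $N := \mathcal{N}_S(c_1\cup c_2\cup \cdots \cup c_k)$ of a chain is always an orientable subsurface of $S$, regardless of whether $S$ itself is orientable: it deformation retracts onto the union $c_1\cup \cdots \cup c_k$, and a direct check of how the annular neighborhoods $\mathcal{N}_S(c_i)$ are plumbed together shows that $N$ is an orientable surface of genus $(k-1)/2$ with two boundary components $\delta ,\delta ^\prime $ when $k$ is odd, and of genus $k/2$ with a single boundary component $\delta $ when $k$ is even. This is precisely why the boundary components named in the statement exist.

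First I would fix the orientation of $N$ to be the one induced by the chosen orientation $\theta $ of $\mathcal{N}_S(\delta )$. With respect to this global orientation of $N$, each curve $c_i$ (as well as each boundary curve) inherits a preferred orientation $\sigma _{c_i}$ of its annular neighborhood, and I write $\hat{t}_{c_i}$ for the right-handed Dehn twist along $c_i$ taken with respect to $\sigma _{c_i}$. In the oriented surface $N$ the classical chain relation reads $(\hat{t}_{c_1}\hat{t}_{c_2}\cdots \hat{t}_{c_k})^{k+1}=\hat{t}_\delta \hat{t}_{\delta ^\prime }$ when $k$ is odd and $(\hat{t}_{c_1}\cdots \hat{t}_{c_k})^{2k+2}=\hat{t}_\delta $ when $k$ is even; I would either cite this as the standard oriented chain relation or indicate the usual induction on $k$, building the chain up one curve at a time. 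I would then push this relation forward along the inclusion-induced homomorphism $\mathcal{M}(N)\rightarrow \mathcal{M}(S)$, which carries each $\hat{t}_c$ to the corresponding Dehn twist in $\mathcal{M}(S)$; injectivity is irrelevant here, since only the forward image of the relation is needed.

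It remains to rewrite each $\hat{t}_c$ in terms of the orientations $\theta _c$ appearing in the statement. By the very definition of $\varepsilon _c$ --- namely $\varepsilon _c=1$ exactly when $\theta _c$ agrees with the orientation of $N$ induced by $\theta $ --- together with the identity $t_{c;+_c}=t_{c;-_c}^{-1}$ recorded in Section~\ref{Preliminaries}, we have $\hat{t}_c=t_{c;\theta _c}^{\varepsilon _c}$ for each $c\in \{ c_1,\dots ,c_k,\delta ^\prime \}$, while $\hat{t}_\delta =t_{\delta ;\theta }$ because $\theta $ is by construction the inducing orientation. Substituting these identities into the pushed-forward relation yields exactly the two displayed equations.

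The main obstacle I anticipate is not the algebra but the orientation bookkeeping. One must verify carefully that $N$ is genuinely orientable and that the single global orientation of $N$ induced by $\theta $ restricts on each annular neighborhood $\mathcal{N}_S(c_i)$, and on $\mathcal{N}_S(\delta ^\prime )$, to precisely the orientation encoded by the sign $\varepsilon _{c_i}$ (resp. $\varepsilon _{\delta ^\prime }$). This forces a comparison between the boundary-orientation convention used to single out $\delta $ and $\delta ^\prime $ and the local twist-direction convention of Figure~\ref{dehntwist}, and a check that all of these comparisons are simultaneously consistent with one fixed orientation of $N$.
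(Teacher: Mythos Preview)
Your proposal is correct and is precisely the standard way to establish this lemma. Note, however, that the paper does not actually prove Lemma~\ref{chain}: it is listed among the relations that the authors simply \emph{recall} (``Recall the following relations in $\mathcal{M}(S)$ among Dehn twists\ldots''), alongside Lemmas~\ref{trivial}, \ref{braid1} and \ref{lantern}, all stated without proof as background facts. So there is no proof in the paper to compare against, and your argument---reduce to the classical oriented chain relation on the orientable subsurface $N=\mathcal{N}_S(c_1\cup\cdots\cup c_k)$, push forward along $\mathcal{M}(N)\to\mathcal{M}(S)$, and then use $t_{c;+_c}=t_{c;-_c}^{-1}$ to convert each $\hat{t}_c$ into $t_{c;\theta_c}^{\varepsilon_c}$---is exactly what underlies the statement. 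Your justification that $N$ is orientable (each $c_i$ is two-sided, so its normal bundle in $S$ is trivial, hence $w_1$ vanishes on the $1$-skeleton $c_1\cup\cdots\cup c_k$ onto which $N$ retracts) is the right one, and the ``orientation bookkeeping'' you flag as the main obstacle is indeed the only real content here.
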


\begin{lem}[The lantern relation]\label{lantern}
Let $\Sigma $ be a subsurface of $S$ which is diffeomorphic to $\Sigma _{0,4}$ and let $\delta _{12}$, $\delta _{23}$, $\delta _{13}$, $\delta _1$, $\delta _2$, $\delta _3$ and $\delta _4$ be simple closed curves on $\Sigma $ as in Figure~\ref{lantern1}. We give an orientation $\theta \in \{ +_{\delta _4},-_{\delta _4}\}$ of $\mathcal{N}_S(\delta _4)$ and the orientation of $\Sigma $ which is induced by $\theta $. Then we have
\[
t_{\delta _{12};\theta _{\delta _{12}}}^{\varepsilon_{\delta _{12}}}t_{\delta _{23};\theta _{\delta _{23}}}^{\varepsilon_{\delta _{23}}}t_{\delta _{13};\theta _{\delta _{13}}}^{\varepsilon_{\delta _{13}}}=t_{\delta _1;\theta _{\delta _1}}^{\varepsilon_{\delta _1}}t_{\delta _2;\theta _{\delta _2}}^{\varepsilon_{\delta _2}}t_{\delta _3;\theta _{\delta _3}}^{\varepsilon_{\delta _3}}t_{\delta _4;\theta },
\]
where for $c\in \{ \delta _{12}, \delta _{23}, \delta _{13}, \delta _1, \delta _2, \delta _3\}$, $\varepsilon _c=1$ if $\theta _c$ coincides with the orientation of $\Sigma $, and $\varepsilon _c=-1$ otherwise.
\end{lem}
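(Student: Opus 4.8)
The plan is to reduce the statement to the classical lantern relation on an oriented four--holed sphere and then to translate the orientation data into the notation $t_{c;\theta_c}^{\varepsilon_c}$. Since $\Sigma\cong\Sigma_{0,4}$ is itself orientable, the entire relation lives inside an oriented subsurface, so no global orientation of $S$ is needed; this is what makes the lantern relation available even when $S$ is non-orientable.

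First, I fix the orientation of $\Sigma$ induced by $\theta$, and for each of the seven curves $c$ I let $\theta_c^{\Sigma}\in\{+_c,-_c\}$ denote the orientation of $\mathcal{N}_S(c)$ induced by this orientation of $\Sigma$. The classical lantern relation asserts that in the oriented $\Sigma$ the right--handed Dehn twists satisfy
\[
t_{\delta_{12};\theta_{\delta_{12}}^{\Sigma}}t_{\delta_{23};\theta_{\delta_{23}}^{\Sigma}}t_{\delta_{13};\theta_{\delta_{13}}^{\Sigma}}=t_{\delta_1;\theta_{\delta_1}^{\Sigma}}t_{\delta_2;\theta_{\delta_2}^{\Sigma}}t_{\delta_3;\theta_{\delta_3}^{\Sigma}}t_{\delta_4;\theta_{\delta_4}^{\Sigma}}.
\]
This holds first in $\mathcal{M}(\Sigma_{0,4})$ and is transported into $\mathcal{M}(S)$ via the homomorphism $\mathcal{M}(\Sigma)\to\mathcal{M}(S)$ induced by the inclusion $\Sigma\hookrightarrow S$, obtained by extending each self--diffeomorphism of $\Sigma$ by the identity on $S\setminus\Sigma$. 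This homomorphism sends the right--handed twist along $c$ in the oriented $\Sigma$ to $t_{c;\theta_c^{\Sigma}}$ and is available whether or not $S$ is orientable, since orientability of $S$ is never used to extend by the identity.

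Second, I translate the twist directions. By the definition of $\varepsilon_c$ in the statement, for each $c\in\{\delta_{12},\delta_{23},\delta_{13},\delta_1,\delta_2,\delta_3\}$ we have $t_{c;\theta_c^{\Sigma}}=t_{c;\theta_c}^{\varepsilon_c}$: if $\theta_c$ coincides with the orientation of $\Sigma$ then $\theta_c=\theta_c^{\Sigma}$ and $\varepsilon_c=1$, while if $\theta_c$ is opposite then $\varepsilon_c=-1$ and the identity $t_{c;+_c}=t_{c;-_c}^{-1}$ recalled in Section~\ref{Preliminaries} gives $t_{c;\theta_c^{\Sigma}}=t_{c;\theta_c}^{-1}$. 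For $\delta_4$ the chosen orientation $\theta$ is by construction exactly $\theta_{\delta_4}^{\Sigma}$, so $t_{\delta_4;\theta_{\delta_4}^{\Sigma}}=t_{\delta_4;\theta}$. Substituting these equalities into the displayed relation yields precisely the asserted identity in $\mathcal{M}(S)$.

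The step I expect to be the main obstacle is the sign bookkeeping in the second part: one must verify that the orientation--free symbol $t_{c;\theta_c}^{\varepsilon_c}$ encodes, uniformly over all seven curves, the single family of right--handed twists with respect to the fixed orientation of $\Sigma$, and that the induced--orientation conventions (as in the bijection $f_\ast$ of Lemma~\ref{braid1}) are applied coherently. Once this identification is made, the relation is immediate from the classical orientable case.
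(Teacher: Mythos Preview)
Your argument is correct. The paper does not give a proof of this lemma: it is stated in the Preliminaries section as a recalled fact, introduced by the phrase ``Recall the following relations in $\mathcal{M}(S)$ among Dehn twists,'' and no proof is supplied. Your reduction to the classical lantern relation on the oriented subsurface $\Sigma\cong\Sigma_{0,4}$, followed by pushing forward along the inclusion-induced homomorphism $\mathcal{M}(\Sigma)\to\mathcal{M}(S)$ and translating the orientation data via $t_{c;+_c}=t_{c;-_c}^{-1}$, is exactly the standard justification and is sound.
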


\begin{figure}[h]
\includegraphics[scale=0.75]{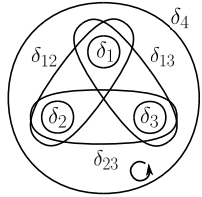}
\caption{The simple closed curves $\delta _{12}$, $\delta _{23}$, $\delta _{13}$, $\delta _1$, $\delta _2$, $\delta _3$ and $\delta _4$ on $\Sigma $.}\label{lantern1}
\end{figure}

Luo's presentation for $\mathcal{M}(\Sigma _{g,n})$, which is an improvement of Gervais' one, is as follows.

\begin{thm}[\cite{Gervais}, \cite{Luo}]\label{pres_Gervais}
For $g\geq 0$ and $n\geq 0$, $\mathcal{M}(\Sigma _{g,n})$ has the following presentation:

generators: $\{ t_c \mid c:\text{ s.c.c. on }\Sigma _{g,n} \}$, where s.c.c. means simple closed curve.

relations:
\begin{enumerate}
 \item[(0$^\prime $)] $t_c=1$ when $c$ bounds a disk in $\Sigma _{g,n}$,
 \item[(I$^\prime $)] All the braid relations $T_0$ and $T_1$,
 \item[(I\hspace{-0.04cm}I)] All the 2-chain relations,
 \item[(I\hspace{-0.04cm}I\hspace{-0.04cm}I)] All the lantern relations.
\end{enumerate} 
\end{thm}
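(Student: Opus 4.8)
The plan is to realize the abstractly presented group as $\mathcal{M}(\Sigma_{g,n})$ by comparing it with a known \emph{finite} presentation whose generators are Dehn twists. Write $\widetilde{\mathcal{M}}$ for the group defined by the generators and relations in the statement. Each of the relations (0$'$), (I$'$), (II), (III) is a special case of Lemmas~\ref{trivial}, \ref{braid1}, \ref{chain} and \ref{lantern} respectively, so sending each symbol $t_c$ to the actual Dehn twist along $c$ defines a homomorphism $\Phi\colon \widetilde{\mathcal{M}}\to \mathcal{M}(\Sigma_{g,n})$; it is surjective because Dehn twists classically generate $\mathcal{M}(\Sigma_{g,n})$. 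Everything therefore reduces to the injectivity of $\Phi$.

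For injectivity I would fix a finite presentation $\mathcal{M}(\Sigma_{g,n})=\langle t_{c_1},\dots,t_{c_N}\mid R_1,\dots,R_M\rangle$ with Dehn-twist generators, as provided by Wajnryb~\cite{Wajnryb} (or Gervais~\cite{Gervais2}). It then suffices to prove two things: (a) the finitely many elements $t_{c_1},\dots,t_{c_N}$ already generate $\widetilde{\mathcal{M}}$, and (b) each defining relation $R_j$ holds in $\widetilde{\mathcal{M}}$. Granting (a) and (b), there is a homomorphism $\Psi\colon \mathcal{M}(\Sigma_{g,n})\to\widetilde{\mathcal{M}}$ sending each $t_{c_i}$ to the corresponding generator, surjective by (a). Since $\Phi\circ\Psi$ fixes every $t_{c_i}$ it is the identity of $\mathcal{M}(\Sigma_{g,n})$, so $\Psi$ is also injective and hence an isomorphism with inverse $\Phi$.

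The heart of (a) is the conjugation formula $t_{f(c)}=f\,t_c\,f^{-1}$ of Lemma~\ref{braid1}, which must be \emph{derived inside} $\widetilde{\mathcal{M}}$, since only $T_0$ and $T_1$ are available there as relations. Writing an arbitrary $f$ as a word in the generators, induction on word length reduces this to the case of a single twist, i.e.\ to $t_{t_d(c)}=t_d t_c t_d^{-1}$; one then inducts on the geometric intersection number $|c\cap d|$, the cases $|c\cap d|=0$ and $1$ being exactly $T_0$ and $T_1$, and the higher cases being reduced by inserting auxiliary curves meeting $c$ and $d$ in at most one point. With the conjugation formula in hand, the change-of-coordinates principle (connectivity of the relevant curve complex) lets me join any simple closed curve $c$ to one of the standard curves $c_i$ by a sequence of twists, so that $t_c$ becomes conjugate in $\widetilde{\mathcal{M}}$ to some $t_{c_i}$ and hence lies in $\langle t_{c_1},\dots,t_{c_N}\rangle$; this yields (a).

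For (b) each $R_j$ is, after rewriting, one of Gervais' relation types, and following Luo~\cite{Luo} I would deduce the remaining ones — in particular all $k$-chain relations and the star relation — from the $2$-chain relation (II) and the lantern relation (III) together with the braid relations. I expect the main obstacle to be precisely this bookkeeping in (b) (equivalently, Luo's reduction): verifying that the comparatively meager list consisting of $T_0$, $T_1$, a single chain length, and the lantern relation is strong enough to generate the full relation set of a finite presentation, all while keeping careful track of the orientations of regular neighborhoods and of the signs $\varepsilon_c$ appearing in Lemmas~\ref{chain} and~\ref{lantern}. The derivation of the single-twist conjugation formula for large $|c\cap d|$ in step (a) is the other genuinely delicate point.
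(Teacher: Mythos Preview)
The paper does not prove Theorem~\ref{pres_Gervais}; it is quoted from \cite{Gervais} and \cite{Luo} as a known input and used only as a black box (notably in Remark~\ref{orientable} and in the surjectivity argument in Section~\ref{section-mainthm}). So there is no ``paper's own proof'' to compare against.

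That said, your outline is essentially the strategy of Gervais and Luo themselves: build the tautological surjection $\Phi$, then construct a section $\Psi$ from a known finite Dehn-twist presentation by checking that its relators already vanish in $\widetilde{\mathcal{M}}$. Your identification of the two delicate points is accurate. The derivation of $t_{t_d(c)}=t_dt_ct_d^{-1}$ for all intersection numbers from $T_0$ and $T_1$ alone is precisely Luo's Lemma (proved by an induction that routes through auxiliary curves of smaller intersection), and the reduction of the full Wajnryb/Gervais relation list to $T_0$, $T_1$, the $2$-chain relation, and the lantern relation is Luo's main contribution in \cite{Luo}. One small organizational remark: step~(a) as you phrase it (that the $t_{c_i}$ generate $\widetilde{\mathcal{M}}$) is not needed to \emph{define} $\Psi$---only (b) is---but it is exactly what gives surjectivity of $\Psi$, and your conjugation/change-of-coordinates argument for it is the standard one.
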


\subsection{Relations among the crosscap pushing maps and Dehn twists}\label{rel_twist_crosscappushing}
Let $\mu $ be a one-sided simple closed curve on $N_{g,n}$ and let $\alpha $ be a simple closed curve on $N_{g,n}$ such that $\mu $ and $\alpha $ intersect transversely at one point. Recall that $\alpha $ is oriented. For these simple closed curves $\mu$ and $\alpha $, we denote by $Y_{\mu , \alpha }$ a self-diffeomorphism on $N_{g,n}$ which is described as the result of pushing the M\"{o}bius band $\mathcal{N}_{N_{g,n}}(\mu )$ once along $\alpha $.  We call $Y_{\mu , \alpha }$ a {\it crosscap pushing map}. In particular, if $\alpha $ is two-sided, we call $Y_{\mu , \alpha }$ a {\it Y-homeomorphism} (or a {\it crosscap slide}), where a {\it crosscap} means a M\"{o}bius band in the interior of a surface. Note that $Y_{\mu ,\alpha }=Y_{\mu ,\alpha ^{-1}}^{-1}=Y_{\mu ^{-1},\alpha }$. The Y-homeomorphism was originally defined by Lickorish~\cite{Lickorish1}. We have the following fundamental relation in $\mathcal{M}(N_{g,n})$ and we also call the relation the {\it braid relation}.

\begin{lem}[The braid relation (ii)]\label{braid2}
Let $\mu $ be a one-sided simple closed curve on $N_{g,n}$ and let $\alpha $ and $\beta $ be simple closed curves on $N_{g,n}$ such that $\mu $ and $\alpha $ intersect transversely at one point. For $f\in \mathcal{M}(N_{g,n})$, suppose that $f(\alpha )=\beta $ or $f(\alpha )=\beta ^{-1}$. Then we have
\begin{eqnarray*}
Y_{f(\mu ),\beta }&=&fY_{\mu ,\alpha }f^{-1} \hspace{0.5cm}\text{when }f(\alpha )=\beta ,\\
Y_{f(\mu ),\beta }^{-1}&=&fY_{\mu ,\alpha }f^{-1} \hspace{0.5cm}\text{when }f(\alpha )=\beta ^{-1}.
\end{eqnarray*}
\end{lem}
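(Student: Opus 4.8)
The plan is to reduce both equations to a single naturality (equivariance) property of the crosscap pushing map, after which the two cases are separated by orientation bookkeeping. The core claim I would isolate is that for any diffeomorphism $\varphi$ representing $f$,
\[
\varphi \circ Y_{\mu ,\alpha }\circ \varphi ^{-1}=Y_{\varphi (\mu ),\varphi (\alpha )}
\]
holds in $\mathcal{M}(N_{g,n})$, where $\varphi (\alpha )$ carries the orientation pushed forward from $\alpha$ by $\varphi$.

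First I would recall that $Y_{\mu ,\alpha }$ is supported in a regular neighborhood $N:=\mathcal{N}_{N_{g,n}}(\mu \cup \alpha )$, being defined by pushing the M\"{o}bius band $\mathcal{N}_{N_{g,n}}(\mu )$ once along $\alpha$, with the orientation of $\alpha$ prescribing the direction of the push. Since $\varphi$ restricts to a diffeomorphism $N\to \varphi (N)=\mathcal{N}_{N_{g,n}}(\varphi (\mu )\cup \varphi (\alpha ))$ taking $\mathcal{N}_{N_{g,n}}(\mu )$ onto $\mathcal{N}_{N_{g,n}}(\varphi (\mu ))$ and $\alpha$ onto $\varphi (\alpha )$, the conjugate $\varphi \circ Y_{\mu ,\alpha }\circ \varphi ^{-1}$ is supported in $\varphi (N)$ and there realizes exactly the pushing of the crosscap $\mathcal{N}_{N_{g,n}}(\varphi (\mu ))$ once along $\varphi (\alpha )$. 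This is the defining operation of $Y_{\varphi (\mu ),\varphi (\alpha )}$, giving the displayed equality. The one point demanding care is the direction of the push: because $\varphi$ transports the chosen orientation of $\alpha$ to the push-forward orientation on $\varphi (\alpha )$, the pushing direction is carried along correctly and no sign appears at this stage.

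With the equivariance established, I would finish by tracking orientations. When $f(\alpha )=\beta$ as oriented isotopy classes, the curve $\varphi (\alpha )$ with its push-forward orientation is isotopic to $\beta$, so $Y_{\varphi (\mu ),\varphi (\alpha )}=Y_{f(\mu ),\beta }$, which is the first equation. When $f(\alpha )=\beta ^{-1}$, the curve $\varphi (\alpha )$ is isotopic to $\beta ^{-1}$, so $Y_{\varphi (\mu ),\varphi (\alpha )}=Y_{f(\mu ),\beta ^{-1}}$; applying the relation $Y_{\mu ,\alpha }=Y_{\mu ,\alpha ^{-1}}^{-1}$ recorded above turns this into $Y_{f(\mu ),\beta }^{-1}$, which is the second equation.

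The hard part will be making the naturality claim rigorous, that is, fixing the definition of \emph{pushing the M\"{o}bius band once along $\alpha$} precisely enough that its commutation with $\varphi$---including the correct behavior of the pushing direction under $\varphi _\ast$---is transparent. Once the construction is phrased so as to depend only on the isotopy class of the oriented pair $(\mu ,\alpha )$, this equivariance becomes the exact analogue for crosscap pushing maps of the braid relation for Dehn twists in Lemma~\ref{braid1}, and the surrounding orientation bookkeeping is routine.
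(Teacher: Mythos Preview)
Your argument is correct and is the standard naturality proof: conjugation transports the local model of the crosscap push along $\varphi$, and the sign is then fixed by whether $\varphi(\alpha)$ matches $\beta$ or $\beta^{-1}$ as oriented curves, using $Y_{\mu,\alpha}=Y_{\mu,\alpha^{-1}}^{-1}$.

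Note, however, that the paper does not actually supply a proof of this lemma. It is introduced with the sentence ``We have the following fundamental relation in $\mathcal{M}(N_{g,n})$'' and is left unproved, exactly as the braid relation~(i) for Dehn twists (Lemma~\ref{braid1}) is stated without proof. So there is no paper proof to compare against; your write-up simply fills in the routine verification that the authors take for granted.
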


We describe crosscap pushing maps from a different point of view. Let $e:D^\prime \hookrightarrow {\rm int}S$ be a smooth embedding of the unit disk $D^\prime \subset \mathbb C$. Put $D:=e(D^\prime )$. Let $S^\prime $ be the surface obtained from $S-{\rm int}D$ by the identification of antipodal points of $\partial D$. We call the manipulation that gives $S^\prime $ from $S$ the {\it blowup of} $S$ {\it on} $D$. Note that the image $M\subset S^\prime$ of $\mathcal{N}_{S-{\rm int}D}(\partial D)\subset S-{\rm int}D$ with respect to the blowup of $S$ on $D$ is a crosscap. Conversely, the {\it blowdown of} $S^\prime$ {\it on }$M$ is the following manipulation that gives $S$ from $S^\prime $. We paste a disk on the boundary obtained by cutting $S$ along the center line $\mu $ of $M$. The blowdown of $S^\prime $ on $M$ is the inverse manipulation of the blowup of $S$ on $D$.

Let $\mu $ be a one-sided simple closed curve on $N_{g,n}$ and let $S$ be the surface which is obtained from $N_{g,n}$ by the blowdown of $N_{g,n}$ on $\mathcal{N}_{N_{g,n}}(\mu )$. Note that $S$ is diffeomorphic to $N_{g-1,n}$ or $\Sigma _{h,n}$ for $g=2h+1$. 
Denote by $x_\mu $ the center point of a disk $D_\mu $ that is pasted on the boundary obtained by cutting $S$ along $\mu $. Let $e:D^\prime \hookrightarrow D_\mu \subset S$ be a smooth embedding of the unit disk $D^\prime \subset \mathbb C$ to $S$ such that $D_\mu =e(D^\prime )$ and $e(0)=x_\mu $. Let $\mathcal{M}(S,x_\mu )$ be the group of isotopy classes of self-diffeomorphisms on $S$ fixing the boundary $\partial S$ and the point $x_\mu $, where isotopies also fix the boundary $\partial S$ and $x_\mu $. Then we have the {\it blowup homomorphism} 
\[
\varphi _\mu :\mathcal{M}(S,x_\mu )\rightarrow \mathcal{M}(N_{g,n})
\]
that is defined as follows. For $h \in \mathcal{M}(S,x_\mu )$, we take a representative $h^\prime $ of $h$ which satisfies either of the following conditions: (a) $h^\prime |_{D_\mu }$ is the identity map on $D_\mu $, (b) $h^\prime (x)=e(\overline{e^{-1}(x)})$ for $x\in D_\mu $, where $\overline{e^{-1}(x)}$ is the complex conjugation of $e^{-1}(x)\in \mathbb C$. Such $h^\prime $ is compatible with the blowup of $S$ on $D_\mu $, thus $\varphi _\mu (h)\in \mathcal{M}(S)$ is induced and well defined (c.f. \cite[Subsection~2.3]{Szepietowski1}). 

The {\it point pushing map} 
\[
j_{x_\mu }:\pi _1(S,x_\mu )\rightarrow \mathcal{M}(S,x_\mu )
\]
is a homomorphism that is defined as follows. For $\gamma \in \pi _1(S,x_\mu )$, $j_{x_\mu }(\gamma )\in \mathcal{M}(S,x_\mu )$ is described as the result of pushing the point $x_\mu $ once along $\gamma $. The point pushing map comes from the Birman exact sequence. Note that for $\gamma _1$, $\gamma _2\in \pi _1(S,x_\mu )$, $\gamma _1\gamma _2$ means $\gamma _1\gamma _2(t)=\gamma _2(2t)$ for $0\leq t\leq \frac{1}{2}$ and $\gamma _1\gamma _2(t)=\gamma _1(2t-1)$ for $\frac{1}{2}\leq t\leq 1$.

Following Szepietowski~\cite{Szepietowski1} we define the composition of the homomorphisms:
\[
\psi _{x_\mu }:=\varphi _\mu \circ j_{x_\mu }:\pi _1(S,x_\mu )\rightarrow \mathcal{M}(N_{g,n}).
\]
For each closed curve $\alpha $ on $N_{g,n}$ which transversely intersects with $\mu $ at one point, we take a loop $\overline{\alpha }$ on $S$ based at $x_\mu $ such that $\overline{\alpha }$ has no self-intersection points on $D_\mu $ and $\alpha $ is the image of $\overline{\alpha }$ with respect to the blowup of $S$ on $D_\mu $. If $\alpha $ is simple, we take $\overline{\alpha }$ as a simple loop. The next two lemmas follow from the description of the point pushing map (see \cite[Lemma~2.2, Lemma~2.3]{Korkmaz2}). 

\begin{lem}\label{pushing1}
For a simple closed curve $\alpha $ on $N_{g,n}$ which transversely intersects with a one-sided simple closed curve $\mu $ on $N_{g,n}$ at one point, we have
\[
\psi _{x_\mu }(\overline{\alpha })=Y_{\mu ,\alpha }.
\]
\end{lem}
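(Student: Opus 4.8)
The plan is to realize both sides of the identity by explicit ambient isotopies and to compare them through the blowup of $S$ on $D_\mu$. First I would use the isotopy description of the point pushing map. Since $\overline{\alpha}$ is a simple loop on $S$ based at $x_\mu=e(0)$ that meets $D_\mu$ in a single arc through $x_\mu$ and has no self-intersection on $D_\mu$, I can choose an ambient isotopy $\{ h_t\}_{t\in[0,1]}$ of $S$ with $h_0=\mathrm{id}_S$, supported in a regular neighborhood $\mathcal{N}_S(\overline{\alpha})$, such that $t\mapsto h_t(x_\mu)$ traces $\overline{\alpha}$ and $h_1$ represents $j_{x_\mu}(\overline{\alpha})\in\mathcal{M}(S,x_\mu)$. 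After shrinking $D_\mu$ I can arrange that the isotopy simply drags the whole disk $h_t(D_\mu)$ along $\overline{\alpha}$, and that the terminal map $h_1$ satisfies condition (a) (or (b)) in the definition of $\varphi_\mu$, so that $\psi_{x_\mu}(\overline{\alpha})=\varphi_\mu(j_{x_\mu}(\overline{\alpha}))$ is represented by a map compatible with the blowup.

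Next I would carry this isotopy through the blowup. Using the blowdown of $N_{g,n}$ on the crosscap $M=\mathcal{N}_{N_{g,n}}(\mu)$, which is the inverse manipulation of the blowup of $S$ on $D_\mu$ and recovers $S$, I can transport $\{h_t\}$ to an ambient isotopy $\{\tilde h_t\}_{t\in[0,1]}$ of $N_{g,n}$ with $\tilde h_0=\mathrm{id}_{N_{g,n}}$; concretely one blows up $S$ on the moving disk $h_t(D_\mu)$ for each $t$. Under $\{\tilde h_t\}$ the crosscap $M$ that replaces $D_\mu$ is dragged once along the image curve $\alpha$ of $\overline{\alpha}$, and by construction $\tilde h_1$ represents $\psi_{x_\mu}(\overline{\alpha})$.

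Finally I would identify $\tilde h_1$ with $Y_{\mu,\alpha}$. By definition $Y_{\mu,\alpha}$ is precisely the self-diffeomorphism of $N_{g,n}$ obtained by pushing the M\"{o}bius band $\mathcal{N}_{N_{g,n}}(\mu)$ once along $\alpha$, which is exactly what the isotopy $\{\tilde h_t\}$ realizes; hence $\psi_{x_\mu}(\overline{\alpha})=[\tilde h_1]=Y_{\mu,\alpha}$ in $\mathcal{M}(N_{g,n})$. The main obstacle, and the only real content, is checking that the conventions match under the blowup: one must verify that the direction in which the point pushing drags $D_\mu$ along $\overline{\alpha}$ blows up to the direction in which $Y_{\mu,\alpha}$ pushes the crosscap along $\alpha$, and that the antipodal identification on $\partial D_\mu$ together with the choice of compatibility condition (a)/(b) introduces no extraneous twist of the crosscap. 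This reduces to examining the local model inside $D_\mu$, where the single arc $\overline{\alpha}\cap D_\mu$ through $x_\mu$ blows up to an arc of $\alpha$ crossing $\mu$ once; this is the same local computation as in \cite[Lemma~2.2, Lemma~2.3]{Korkmaz2}.
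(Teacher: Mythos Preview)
Your proposal is correct and is essentially the argument the paper has in mind: the paper does not write out a proof at all but simply remarks that the lemma ``follows from the description of the point pushing map (see \cite[Lemma~2.2, Lemma~2.3]{Korkmaz2}),'' and what you have sketched is precisely that description carried through the blowup construction. In other words, you have unpacked the citation the paper leaves implicit, and your closing reference to \cite[Lemma~2.2, Lemma~2.3]{Korkmaz2} matches the paper's own justification.
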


\begin{lem}\label{pushing2}
For a one-sided simple closed curve $\alpha $ on $N_{g,n}$ which transversely intersects with a one-sided simple closed curve $\mu $ on $N_{g,n}$ at one point, we take $\mathcal{N}_{S}(\overline{\alpha })$ such that the interior of $\mathcal{N}_{S}(\overline{\alpha })$ contains $D_\mu $ and an orientation $\theta _{\overline{\alpha }}\in \{ +_{\overline{\alpha }},-_{\overline{\alpha }}\}$ of $\mathcal{N}_{S}(\overline{\alpha })$. Denote by $\overline{\delta _1}$ (resp. $\overline{\delta _2}$) the boundary component of $\mathcal{N}_{S}(\overline{\alpha })$ on the right (resp. left) side of $\overline{\alpha }$, and by $\delta _i$ $(i=1,2)$ the two-sided simple closed curve on $N_{g,n}$ which is the image of $\overline{\delta _i}$ with respect to the blowup of $S$ on $D_\mu $. Let $\overline{\theta _i}\in \{ +_{\overline{\delta _i}},-_{\overline{\delta _i}}\}$ $(i=1,2)$ be the orientation of $\mathcal{N}_{S}(\overline{\delta _i})$ which is induced by $\theta _{\overline{\alpha }}$ and $\theta _i\in \{ +_{\delta _i},-_{\delta _i}\}$ $(i=1,2)$ the orientation of $\mathcal{N}_{N_{g,n}}(\delta _i)$ which is induced by $\overline{\theta _i}$ (see Figure~\ref{crosscap_def_twist}). Then we have 
\[
Y_{\mu ,\alpha }=t_{\delta _1;\theta _{\delta _1}}^{\varepsilon _{\delta _1}}t_{\delta _2;\theta _{\delta _2}}^{-\varepsilon _{\delta _2}},
\] 
where $\varepsilon _{\delta _i}=1$ if $\theta _{\delta _i}=\theta _i$, and $\varepsilon _{\delta _i}=-1$ otherwise.
\end{lem}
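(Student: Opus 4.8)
The plan is to read the formula off the factorization $Y_{\mu ,\alpha }=\psi _{x_\mu }(\overline{\alpha })=\varphi _\mu (j_{x_\mu }(\overline{\alpha }))$ supplied by Lemma~\ref{pushing1} together with the definition of $\psi _{x_\mu }$, and then to combine it with the standard description of the point pushing map along a simple loop (Korkmaz~\cite{Korkmaz2}, Lemma~2.2 and Lemma~2.3). First I would check that the setup of the statement is consistent: because $\alpha $ is one-sided on $N_{g,n}$ and meets the one-sided curve $\mu $ transversely in a single point, the blowdown removes exactly the crosscap accounting for the non-orientability of $\alpha $ along the arc crossing $\mu $. Hence the pulled-back loop $\overline{\alpha }$ is two-sided on $S$, its regular neighborhood $\mathcal{N}_S(\overline{\alpha })$ is an annulus, and it has the two boundary circles $\overline{\delta _1}$, $\overline{\delta _2}$ named in the statement; this is what legitimizes choosing $\mathcal{N}_S(\overline{\alpha })$ with $D_\mu $ in its interior and two boundary components.

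The heart of the argument is the point pushing map. For a two-sided simple loop $\overline{\alpha }$ based at $x_\mu $, pushing $x_\mu $ once around $\overline{\alpha }$ is isotopic to the product of the right-handed Dehn twist along one boundary circle of the annular neighborhood and the inverse Dehn twist along the other. With the orientation $\theta _{\overline{\alpha }}$ of $\mathcal{N}_S(\overline{\alpha })$ fixed and the induced orientations $\overline{\theta _1}$, $\overline{\theta _2}$ of $\mathcal{N}_S(\overline{\delta _1})$, $\mathcal{N}_S(\overline{\delta _2})$, this reads
\[
j_{x_\mu }(\overline{\alpha })=t_{\overline{\delta _1};\overline{\theta _1}}\,t_{\overline{\delta _2};\overline{\theta _2}}^{-1},
\]
the right/left labelling of $\overline{\delta _1}$, $\overline{\delta _2}$ recording which twist carries the positive exponent. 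This is precisely the content of the cited lemmas of Korkmaz.

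Next I would push this identity forward under the blowup homomorphism $\varphi _\mu $. Since the curves $\overline{\delta _i}$ are disjoint from the disk $D_\mu $ (they bound the annulus from outside $D_\mu $), a representative of $t_{\overline{\delta _i};\overline{\theta _i}}$ can be chosen to equal the identity on $D_\mu $, so it is compatible with the blowup in the sense of condition~(a) in the definition of $\varphi _\mu $, and $\varphi _\mu (t_{\overline{\delta _i};\overline{\theta _i}})=t_{\delta _i;\theta _i}$, where $\theta _i$ is exactly the orientation of $\mathcal{N}_{N_{g,n}}(\delta _i)$ induced from $\overline{\theta _i}$ by the blowup (see Figure~\ref{crosscap_def_twist}). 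Since $\varphi _\mu $ is a homomorphism, applying it to the displayed factorization and using $Y_{\mu ,\alpha }=\varphi _\mu (j_{x_\mu }(\overline{\alpha }))$ gives $Y_{\mu ,\alpha }=t_{\delta _1;\theta _1}\,t_{\delta _2;\theta _2}^{-1}$. Finally I would rewrite each twist with respect to the reference orientation $\theta _{\delta _i}$ appearing in the statement: using $t_{c;+_c}=t_{c;-_c}^{-1}$ we have $t_{\delta _i;\theta _i}=t_{\delta _i;\theta _{\delta _i}}^{\varepsilon _{\delta _i}}$ with $\varepsilon _{\delta _i}=1$ when $\theta _{\delta _i}=\theta _i$ and $\varepsilon _{\delta _i}=-1$ otherwise, converting the previous equality into the asserted formula $Y_{\mu ,\alpha }=t_{\delta _1;\theta _{\delta _1}}^{\varepsilon _{\delta _1}}t_{\delta _2;\theta _{\delta _2}}^{-\varepsilon _{\delta _2}}$.

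I expect the main obstacle to be the orientation bookkeeping rather than any topological difficulty. One must verify that the right/left convention for $\overline{\delta _1}$, $\overline{\delta _2}$ matches the sign convention in the point pushing formula, and that the chain of induced orientations $\theta _{\overline{\alpha }}\rightsquigarrow \overline{\theta _i}\rightsquigarrow \theta _i$ is tracked consistently through the blowup, since a single sign slip here flips an exponent. The topological input, namely that point pushing along a two-sided simple loop is the product of oppositely oriented boundary twists, is classical; the genuine work is confirming that this description applies verbatim after passing from $(S,x_\mu )$ to $N_{g,n}$ via $\varphi _\mu $ and that Figure~\ref{crosscap_def_twist} encodes the correct relative orientations for the $\delta _i$.
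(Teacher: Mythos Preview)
Your proposal is correct and matches the paper's approach exactly: the paper does not give a detailed proof but simply states that the lemma follows from the description of the point pushing map, citing \cite[Lemma~2.2, Lemma~2.3]{Korkmaz2}, and your argument is precisely the unpacking of that citation---factoring $Y_{\mu,\alpha}$ through $\varphi_\mu\circ j_{x_\mu}$, invoking the standard twist-pair formula for $j_{x_\mu}(\overline{\alpha})$ along a two-sided simple loop, and pushing forward via the blowup homomorphism. Your attention to the orientation bookkeeping is appropriate, as that is indeed the only place where care is required.
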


\begin{figure}[h]
\includegraphics[scale=0.6]{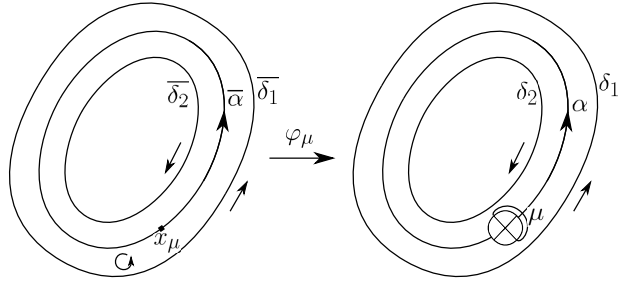}
\caption{Simple closed curves $\overline{\delta _1}$, $\overline{\delta _2}$, $\delta _1$ and $\delta _2$, and orientations $\overline{\theta _1}$, $\overline{\theta _2}$, $\theta _1$ and $\theta _2$ of their regular neighborhoods. The x-mark means that antipodal points of $\partial D_\mu $ are identified.}\label{crosscap_def_twist}
\end{figure}

By the definition of the homomorphism $\psi _{x_\mu }$ and Lemma~\ref{pushing1}, we have the following lemma.

\begin{lem}\label{pushing3}
Let $\alpha $ and $\beta $ be simple closed curves on $N_{g,n}$ which transversely intersect with a one-sided simple closed curve $\mu $ on $N_{g,n}$ at one point each. Suppose that the product $\overline{\alpha }\overline{\beta }$ of $\overline{\alpha }$ and $\overline{\beta }$ in $\pi _1(S,x_\mu )$ is represented by a simple loop on $S$, and $\alpha \beta $ is a simple closed curve on $N_{g,n}$ which is the image of the representative of $\overline{\alpha }\overline{\beta }$ with respect to the blowup of $S$ on $D_\mu $. Then we have
\[
Y_{\mu ,\alpha \beta }=Y_{\mu ,\alpha }Y_{\mu ,\beta }.
\]
\end{lem}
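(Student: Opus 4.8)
The plan is to deduce the relation directly from the fact that $\psi_{x_\mu}$ is a group homomorphism, together with Lemma~\ref{pushing1}. Recall that $\psi_{x_\mu}=\varphi_\mu\circ j_{x_\mu}$ is by definition the composition of the point pushing map $j_{x_\mu}\colon \pi_1(S,x_\mu)\to \mathcal{M}(S,x_\mu)$ and the blowup homomorphism $\varphi_\mu\colon \mathcal{M}(S,x_\mu)\to \mathcal{M}(N_{g,n})$, both of which are homomorphisms. Hence $\psi_{x_\mu}$ is itself a homomorphism, and in particular
\[
\psi_{x_\mu}(\overline{\alpha}\,\overline{\beta})=\psi_{x_\mu}(\overline{\alpha})\,\psi_{x_\mu}(\overline{\beta}).
\]

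First I would apply Lemma~\ref{pushing1} to each of $\overline{\alpha}$ and $\overline{\beta}$ separately. Since $\alpha$ and $\beta$ are simple closed curves each meeting $\mu$ transversely at one point, with chosen simple lifts $\overline{\alpha}$, $\overline{\beta}$ on $S$ based at $x_\mu$, Lemma~\ref{pushing1} gives $\psi_{x_\mu}(\overline{\alpha})=Y_{\mu,\alpha}$ and $\psi_{x_\mu}(\overline{\beta})=Y_{\mu,\beta}$. Substituting these into the displayed identity reduces the problem to identifying the left-hand side $\psi_{x_\mu}(\overline{\alpha}\,\overline{\beta})$ with $Y_{\mu,\alpha\beta}$.

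Next I would verify that Lemma~\ref{pushing1} also applies to the product loop $\overline{\alpha}\,\overline{\beta}$. By hypothesis $\overline{\alpha}\,\overline{\beta}$ is represented by a simple loop on $S$ based at $x_\mu$, and its image under the blowup of $S$ on $D_\mu$ is the simple closed curve $\alpha\beta$. Because the representing loop is simple and based at the center $x_\mu$ of $D_\mu$, the portion of the loop inside $D_\mu$ may be taken to be a diameter whose two antipodal endpoints on $\partial D_\mu$ are identified by the blowup; consequently its image $\alpha\beta$ meets $\mu$ transversely at exactly one point. Thus $\alpha\beta$ satisfies the hypotheses of Lemma~\ref{pushing1}, and applying that lemma to $\overline{\alpha}\,\overline{\beta}$ yields $\psi_{x_\mu}(\overline{\alpha}\,\overline{\beta})=Y_{\mu,\alpha\beta}$. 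Combining this with the homomorphism identity gives $Y_{\mu,\alpha\beta}=Y_{\mu,\alpha}Y_{\mu,\beta}$.

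The only point demanding care is bookkeeping of conventions rather than any genuine difficulty: one must ensure that the concatenation order defining $\overline{\alpha}\,\overline{\beta}$ in $\pi_1(S,x_\mu)$ matches the composition order $fh=[\varphi\circ\psi]$ used in $\mathcal{M}(N_{g,n})$, so that the homomorphism property produces the factors in the stated order $Y_{\mu,\alpha}Y_{\mu,\beta}$ and not the reverse. Since $j_{x_\mu}$ is already asserted to be a homomorphism for the stated concatenation convention, this is automatic, and no further computation with the explicit models of $\varphi_\mu$ or $j_{x_\mu}$ is required.
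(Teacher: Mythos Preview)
Your proof is correct and follows the same approach as the paper, which simply observes that the lemma follows from the definition of the homomorphism $\psi_{x_\mu}$ together with Lemma~\ref{pushing1}. Your write-up is in fact more detailed than the paper's, spelling out why Lemma~\ref{pushing1} applies to the product loop and noting the compatibility of conventions.
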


Finally, we recall the following relation between a Dehn twist and a Y-homeomorphism.

\begin{lem}\label{pushing4}
Let $\alpha $ be a two-sided simple closed curve on $N_{g,n}$ which transversely intersect with a one-sided simple closed curve $\mu $ on $N_{g,n}$ at one point and let $\delta $ be the boundary of $\mathcal{N}_{N_{g,n}}(\alpha \cup \mu )$. Since $\overline{\alpha }^2\in \pi _1(S,x_\mu )$ is represented by a two-sided simple loop, we take $\delta _i$ and $\theta _i$ $(i=1,2)$ as in Lemma~\ref{pushing2} when $\overline{\alpha}$ in Lemma~\ref{pushing2} is replaced by $\overline{\alpha}^2$ (see Figure~\ref{yhomeo_square}). Then we have
\begin{eqnarray*}
Y_{\mu ,\alpha }^2&=&t_{\delta ;\theta _{\delta _1}}^{\varepsilon _{\delta _1}} \hspace{0.5cm}\text{when }\delta =\delta _1,\\
Y_{\mu ,\alpha }^2&=&t_{\delta ;\theta _{\delta _2}}^{-\varepsilon _{\delta _2}} \hspace{0.5cm}\text{when }\delta =\delta _2,
\end{eqnarray*}
where $\varepsilon _{\delta _i}=1$ for $i=1,2$ if $\theta _{\delta _i}= \theta _i$, and $\varepsilon _{\delta _i}=-1$ otherwise.
\end{lem}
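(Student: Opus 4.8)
The plan is to deduce the square formula from the point-pushing description of the crosscap pushing map together with the fact that $\psi_{x_\mu}=\varphi_\mu\circ j_{x_\mu}$ is a group homomorphism. First I would observe that, since $j_{x_\mu}$ and $\varphi_\mu$ are homomorphisms, so is $\psi_{x_\mu}$, and hence by Lemma~\ref{pushing1}
\[
Y_{\mu,\alpha}^2=\psi_{x_\mu}(\overline{\alpha})^2=\psi_{x_\mu}(\overline{\alpha}^2).
\]
Because $\alpha$ is two-sided and meets $\mu$ once, the loop $\overline{\alpha}$ is one-sided on $S$, so its square $\overline{\alpha}^2$ is represented by a two-sided simple loop; this is exactly the situation in which the $\delta_i$ and $\theta_i$ are defined in the statement.

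Next I would rerun the computation that proves Lemma~\ref{pushing2}, but applied to the two-sided simple loop $\overline{\alpha}^2$ in place of $\overline{\alpha}$. The point-pushing map $j_{x_\mu}$ along a two-sided simple loop is the product of a Dehn twist and an inverse Dehn twist along the two boundary curves $\overline{\delta_1},\overline{\delta_2}$ of its regular neighborhood in $S$, and $\varphi_\mu$ carries these to the twists along $\delta_1,\delta_2$ on $N_{g,n}$. This yields
\[
Y_{\mu,\alpha}^2=t_{\delta_1;\theta_{\delta_1}}^{\varepsilon_{\delta_1}}t_{\delta_2;\theta_{\delta_2}}^{-\varepsilon_{\delta_2}},
\]
with the orientation conventions $\theta_{\delta_i}$ and signs $\varepsilon_{\delta_i}$ inherited verbatim from Lemma~\ref{pushing2}.

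The crux is then to identify the two curves $\delta_1,\delta_2$ with the single boundary curve $\delta$ of $\mathcal{N}_{N_{g,n}}(\alpha\cup\mu)$. Reading off Figure~\ref{yhomeo_square}, the annular neighborhood $\mathcal{N}_S(\overline{\alpha}^2)$ contains $D_\mu$, so on one side its boundary curve bounds, in $S$, a M\"obius band whose core is the one-sided curve $\overline{\alpha}$ and which is disjoint from $D_\mu$; the blowup leaves this curve bounding a M\"obius band in $N_{g,n}$, so the corresponding Dehn twist is trivial by Lemma~\ref{trivial}. The other boundary curve, on the side containing $D_\mu$, bounds the blowup of $\mathcal{N}_S(\overline{\alpha}^2)$, which is exactly $\mathcal{N}_{N_{g,n}}(\alpha\cup\mu)$, and is therefore isotopic to $\delta$. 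Depending on whether $\delta$ is the right-hand curve $\delta_1$ or the left-hand curve $\delta_2$, one of the two factors above survives and the other drops out, giving precisely the two displayed formulas $t_{\delta;\theta_{\delta_1}}^{\varepsilon_{\delta_1}}$ and $t_{\delta;\theta_{\delta_2}}^{-\varepsilon_{\delta_2}}$.

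I expect the main obstacle to be this last identification: verifying, with the correct orientations, that exactly one of $\delta_1,\delta_2$ bounds a M\"obius band (so that its twist vanishes) while the other is isotopic to $\delta$, and matching the two geometric configurations to the two cases $\delta=\delta_1$ and $\delta=\delta_2$. The bookkeeping of the induced orientations $\theta_{\delta_i}$ and the signs $\varepsilon_{\delta_i}$ under the blowup --- already the delicate point in Lemma~\ref{pushing2} --- is what will need the most care here.
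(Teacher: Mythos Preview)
Your proposal is correct and matches the paper's own argument: the paper simply remarks that Lemma~\ref{pushing4} follows from Lemma~\ref{trivial}, Lemma~\ref{pushing2}, and Lemma~\ref{pushing3}, which is precisely your three-step strategy (multiplicativity to get $Y_{\mu,\alpha}^2=\psi_{x_\mu}(\overline\alpha^2)$, the point-pushing formula for the two-sided loop $\overline\alpha^2$, and the triviality of the twist along the side bounding a M\"obius band). Your use of the homomorphism $\psi_{x_\mu}$ directly in place of citing Lemma~\ref{pushing3} is only a cosmetic difference, since that lemma is itself an immediate consequence of $\psi_{x_\mu}$ being a homomorphism together with Lemma~\ref{pushing1}.
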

Lemma~\ref{pushing4} follows from relations in Lemma~\ref{trivial}, Lemma~\ref{pushing2} and Lemma~\ref{pushing3}.

\begin{figure}[h]
\includegraphics[scale=0.6]{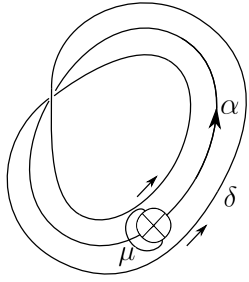}
\caption{The orientation $\theta _1$ of $\mathcal{N}_{N_{g,n}}(\delta )$ when $\delta =\delta _1$ or the orientation $\theta _2$ of $\mathcal{N}_{N_{g,n}}(\delta )$ when $\delta =\delta _2$.}\label{yhomeo_square}
\end{figure}

\section{Finite presentation for $\mathcal{M}(N_{g,n})$}\label{section_finitepres}

In this section, we review Stukow's finite presentation for $\mathcal{M}(N_{g,n})$ when $n\in \{ 0,1\}$ and give a finite presentation for $\mathcal{M}(N_{g,n})$ when $n\geq 2$. We use their finite presentations for $\mathcal{M}(N_{g,n})$ in the proof of the main theorem in Section~\ref{section-mainthm}.

Let $e_i:D^\prime \hookrightarrow {\rm int}\Sigma _{0,1}$ for $i=1$, $2, \dots $, $g+n-1$ be smooth embeddings of the unit disk $D^\prime \subset \mathbb C$ to a disk $\Sigma _{0,1}$ such that $D_i:=e_i(D^\prime )$ and $D_j$ are disjoint for distinct $1\leq i,j\leq g+n-1$. For $n\geq 1$, we take a model of $N_{g,n}$ as the surface obtained from $\Sigma _{0,1}-({\rm int}D_{g+1}\sqcup \cdots \sqcup {\rm int}D_{n-1})$ by the blowups on $D_1,\dots ,D_g$ and we describe the identification of $\partial D_i$ by the x-mark as in Figures~\ref{scc_nonorisurf1}. We denote by $\delta _1, \dots ,\delta _{n-1}$ and $\delta $ boundary components of $N_{g,n}$ as in Figure~\ref{scc_nonorisurf1} which are obtained from $\partial D_{g+1}, \dots ,\partial D_{g+n-1}$ and $\partial \Sigma _{0,1}$, respectively.
Let $\alpha _1, \dots ,\alpha _{g-1}, \beta $ and $\mu _1$ be simple closed curves on $N_{g,n}$ as in Figure~\ref{scc_nonorisurf1} and let $\alpha _{i;j}$ for $1\leq i\leq g-1$ and $1\leq j\leq n-1$, $\rho _{i;j}$ for $1\leq i\leq g$ and $1\leq j\leq n-1$ and $\sigma _{i,j}$, $\bar{\sigma }_{i,j}$ for $1\leq i<j\leq n-1$ be simple closed curves on $N_{g,n}$ as in Figure~\ref{scc_nonorisurf2}. We give orientations of regular neighborhoods of their simple closed curves as in Figure~\ref{scc_nonorisurf1}, \ref{scc_nonorisurf2}. Then we define the mapping classes 
\begin{eqnarray*}
a_i&:=&t_{\alpha _i}   \hspace{1cm} \text{for } 1\leq i\leq g-1, \\
b&:=&t_\beta ,\\
y&:=&Y_{\mu _1,\alpha _1},\\
d_i&:=&t_{\delta _i} \hspace{1cm} \text{for } 1\leq i\leq n-1,\\
a_{i;j}&:=&t_{\alpha _{i;j}}  \hspace{1cm} \text{for } 1\leq i\leq g-1 \text{ and } 1\leq j\leq n-1,\\
r_{i;j}&:=&t_{\rho _{i;j}}  \hspace{1cm} \text{for } 1\leq i\leq g \text{ and } 1\leq j\leq n-1,\\
s_{i,j}&:=&t_{\sigma _{i,j}} \hspace{1cm} \text{for } 1\leq i<j\leq n-1,\\
\bar{s}_{i,j}&:=&t_{\bar{\sigma }_{i,j}} \hspace{1cm} \text{for } 1\leq i<j\leq n-1,\\
\bar{s}_{j,k;i}&:=&\{(a_{1;k}a_1^{-1})^{-1}r_{2;k}\cdots(a_{i-1;k}a_{i-1}^{-1})^{-1}r_{i;k}\}^{-1}\bar{s}_{j,k}\\
&&\{(a_{1;k}a_1^{-1})^{-1}r_{2;k}\cdots(a_{i-1;k}a_{i-1}^{-1})^{-1}r_{i;k}\} \\
&&\text{ for }2\leq i\leq g \text{ and }1\leq j<k\leq n-1.
\end{eqnarray*}
Remark that, for $2\leq i\leq g$ and $1\leq j<k\leq n-1$, $\bar{s}_{j,k;i}$ is the Dehn twist along the simple closed curve $\bar{\sigma }_{j,k;i}$ on $N_{g,n}$ as in Figure~\ref{sigmabar_jki}.

\begin{figure}[h]
\includegraphics[scale=0.7]{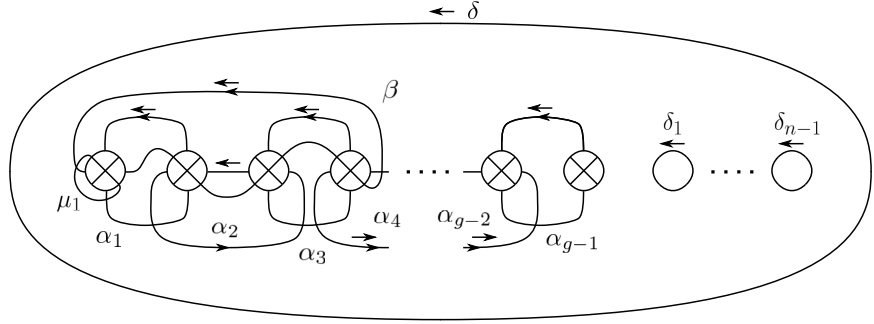}
\caption{A model of $N_{g,n}$ and simple closed curves $\alpha _1$, $\dots $, $\alpha _{g-1}$, $\beta $ and $\mu _1$ on $N_{g,n}$.}\label{scc_nonorisurf1}
\end{figure}

\begin{figure}[h]
\includegraphics[scale=0.65]{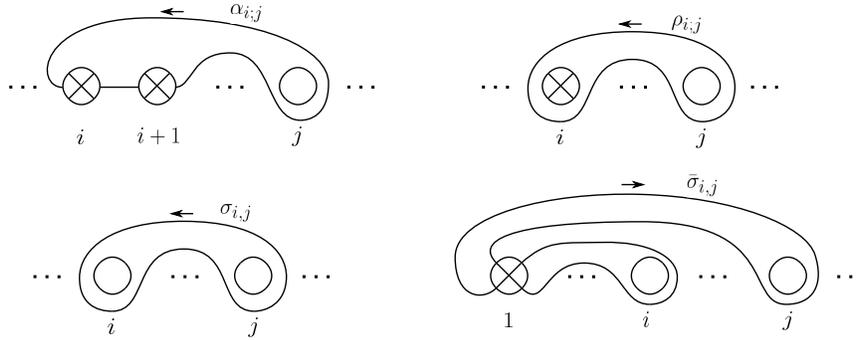}
\caption{The simple closed curves $\alpha _{i;j}$, $\rho _{i;j}$, $\sigma _{i,j}$ and $\bar{\sigma }_{i,j}$ on $N_{g,n}$.}\label{scc_nonorisurf2}
\end{figure}

\begin{figure}[h]
\includegraphics[scale=0.7]{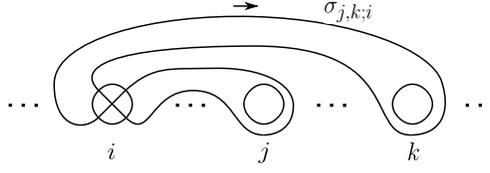}
\caption{The simple closed curve $\bar{\sigma }_{j,k;i}$ on $N_{g,n}$.}\label{sigmabar_jki}
\end{figure}

Epstein~\cite{Epstein} show that $\mathcal{M}(N_{1,1})$ is trivial. We define $[x_1,x_2]=x_1x_2x_1^{-1}x_2^{-1}$. Stukow gave the following finite presentation for $\mathcal{M}(N_{g,1})$ when $g=2$ in \cite{Stukow1}, and when $g\geq 3$ in  \cite{Stukow3} by rewriting the finite presentation in \cite{Paris-Szepietowski}.

\begin{thm}[\cite{Epstein}, \cite{Stukow1}, \cite{Stukow3}]\label{thm_Stukow}
$\mathcal{M}(N_{1,1})$ is the trivial group. $\mathcal{M}(N_{2,1})$ has the presentation
\begin{eqnarray*}
\mathcal{M}(N_{2,1})&=&\bigl< a_1, y \mid ya_1y^{-1}=a_1^{-1}\bigr>.
\end{eqnarray*}

If $g\geq 3$, then $\mathcal{M}(N_{g,1})$ admits a presentation with generators $a_1,\dots , a_{g-1}, y$, and $b$ for $g\geq 4$. The defining relations are
\begin{enumerate}
 \item[(A1)] $[a_i,a_j]=1$\hspace{0.5cm} for $g\geq 4$, $|i-j|>1$,
 \item[(A2)] $a_ia_{i+1}a_i=a_{i+1}a_ia_{i+1}$\hspace{1cm} for $i=1,\dots ,g-2$,
 \item[(A3)] $[a_i,b]=1$\hspace{0.5cm} for $g\geq 4$, $i\not=4$,
 \item[(A4)] $a_4ba_4=ba_4b$\hspace{0.5cm} for $g\geq 5$,
 \item[(A5)] $(a_2a_3a_4b)^{10}=(a_1a_2a_3a_4b)^6$\hspace{0.5cm} for $g\geq 5$,
 \item[(A6)] $(a_2a_3a_4a_5a_6b)^{12}=(a_1a_2a_3a_4a_5a_6b)^9$\hspace{0.5cm} for $g\geq 7$,
 \item[(A9a)] $[b_2,b]=1$\hspace{0.5cm} for $g=6$,
 \item[(A9b)] $[a_{g-5},b_{\frac{g-2}{2}}]=1$\hspace{0.5cm} for $g\geq 8$ even,\\
 where $b_0=a_1$, $b_1=b$ and\\ $b_{i+1}=(b_{i-1}a_{2i}a_{2i+1}a_{2i+2}a_{2i+3}b_i)^5(b_{i-1}a_{2i}a_{2i+1}a_{2i+2}a_{2i+3})^{-6}$ \\
for $1\leq i\leq \frac{g-4}{2}$,
 \item[(B1)] $y(a_2a_3a_1a_2ya_2^{-1} a_1^{-1}a_3^{-1}a_2^{-1}) = (a_2a_3a_1a_2ya_2^{-1}a_1^{-1}a_3^{-1}a_2^{-1})y$ \hspace{0.5cm}for $g\geq 4$,
 \item[(B2)] $y(a_2a_1y^{-1}a_2^{-1}ya_1a_2)y=a_1(a_2a_1y^{-1}a_2^{-1}ya_1a_2)a_1$,
 \item[(B3)] $[a_i,y]=1$\hspace{0.5cm} for $g\geq 4$, $i=3,\dots ,g-1$,
 \item[(B4)] $a_2(ya_2y^{-1}) = (ya_2y^{-1})a_2$,
 \item[(B5)] $ya_1=a_1^{-1}y$,
 \item[(B6)] $byby^{-1} = \{a_1a_2a_3(y^{-1}a_2y)a_3^{-1}a_2^{-1}a_1^{-1} \}\{a_2^{-1}a_3^{-1}(ya_2y^{-1})a_3a_2\}$\hspace{0.5cm} for $g\geq 4$,
 \item[(B7)] $[(a_4a_5a_3a_4a_2a_3a_1a_2ya_2^{-1}a_1^{-1}a_3^{-1}a_2^{-1}a_4^{-1}a_3^{-1}a_5^{-1}a_4^{-1}),b] =1$\hspace{0.5cm} for $g\geq 6$,
 \item[(B8)] $\{(ya_1^{-1}a_2^{-1}a_3^{-1}a_4^{-1})b(a_4a_3a_2a_1y^{-1})\}\{(a_1^{-1}a_2^{-1}a_3^{-1}a_4^{-1})b^{-1}(a_4a_3a_2a_1)\}$
	     $=\{(a_4^{-1}a_3^{-1}a_2^{-1})y(a_2a_3a_4)\}\{a_3^{-1}a_2^{-1}y^{-1}a_2a_3\}\{a_2^{-1}ya_2\}y^{-1}$\hspace{0.5cm} for $g\geq 5$.
\end{enumerate}
\end{thm}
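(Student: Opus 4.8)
The plan is to handle the three genus regimes separately, since $g=1$ and $g=2$ are degenerate small cases while the real content lies in the generic range $g\ge 3$.

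For $g=1$ the surface $N_{1,1}$ is a M\"obius band, and I would invoke Epstein's classification \cite{Epstein}: every diffeomorphism fixing $\partial N_{1,1}$ pointwise is isotopic rel boundary to the identity, so $\mathcal{M}(N_{1,1})=1$. For $g=2$ the group is generated by $a_1=t_{\alpha_1}$ and the crosscap slide $y=Y_{\mu_1,\alpha_1}$. The single relation $ya_1y^{-1}=a_1^{-1}$ is forced geometrically: since $y(\alpha_1)=\alpha_1^{-1}$ and $y$ reverses the orientation of $\mathcal{N}_{N_{2,1}}(\alpha_1)$, Lemma~\ref{braid1} gives $yt_{\alpha_1}y^{-1}=t_{\alpha_1}^{-1}$. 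The substantive point is \emph{completeness}, i.e.\ that $\mathcal{M}(N_{2,1})$ is exactly the Klein-bottle group $\langle a_1,y\mid ya_1y^{-1}=a_1^{-1}\rangle$; I would establish this by identifying the group directly, following Stukow \cite{Stukow1}, for instance through a Birman exact sequence relating $N_{2,1}$ to the closed Klein bottle $N_2$ together with the known finite structure of $\mathcal{M}(N_2)$.

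The main case $g\ge 3$ I would not attempt to present from scratch. Instead I would begin from the finite presentation of Paris--Szepietowski \cite{Paris-Szepietowski}, which is itself produced by the standard machinery that turns the action of $\mathcal{M}(N_{g,1})$ on a suitable simply connected complex of curves into a group presentation, and then perform Tietze transformations, following Stukow \cite{Stukow3}. The key step is to replace their generators by the Dehn twists $a_1,\dots,a_{g-1}$, the twist $b=t_\beta$ (present for $g\ge4$), and the single crosscap slide $y$. Each Paris--Szepietowski generator is expressible as a word in $a_i,b,y$ by repeated use of the conjugation/braid relations of Lemma~\ref{braid1} and Lemma~\ref{braid2}; conversely $a_i,b,y$ lie in their generating set up to such words. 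The braid relations then yield (A1)--(A4) and (B1)--(B5), the chain relations (Lemma~\ref{chain}) yield (A5) and (A6), and lantern- and pushing-type relations account for (B6)--(B8), while the recursively defined elements $b_i$ encode Dehn twists along separating curves of higher genus and feed into (A9a) and (A9b).

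The hard part will be exactly this translation. Establishing that the finite list (A1)--(B8) is \emph{equivalent} to the Paris--Szepietowski relations---neither losing a relation nor introducing a spurious one under the Tietze moves---is a long and geometrically delicate computation. The chief source of error is orientation bookkeeping: one must track the orientations of the regular neighborhoods when applying Lemma~\ref{braid1} and Lemma~\ref{braid2}, and the sign conventions $\varepsilon_{\delta_i}$ of Lemma~\ref{pushing2} and Lemma~\ref{pushing4} have to be maintained consistently throughout. Matching the recursively defined $b_i$ in (A9a)/(A9b) against the appropriate separating-curve twists is the fiddliest piece. By contrast, confirming that each listed relation \emph{holds} in $\mathcal{M}(N_{g,1})$ is in every case a direct instance of one of the relations assembled in Section~\ref{Preliminaries}, so the genuine labor is completeness rather than existence.
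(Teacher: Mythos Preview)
Your proposal is correct and matches the paper's treatment: the paper does not prove Theorem~\ref{thm_Stukow} but merely cites it from \cite{Epstein}, \cite{Stukow1}, and \cite{Stukow3}, noting explicitly that Stukow obtained the $g\ge 3$ presentation by rewriting the Paris--Szepietowski presentation---exactly the Tietze-transformation approach you outline. Your sketch of the underlying arguments (Epstein for $g=1$, direct analysis via \cite{Stukow1} for $g=2$, and Paris--Szepietowski plus Tietze moves for $g\ge 3$) accurately reflects how the cited sources establish the result.
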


For $n\geq 2$, we have the following finite presentation for $\mathcal{M}(N_{g,n})$ and give the proof in Section~\ref{proof_finite_presentation}. 

\begin{prop}\label{thm_finite_presentation}
For $g\geq 1$ and $n\geq 2$, $\mathcal{M}(N_{g,n})$ has the presentation which obtained from the finite presentation for $\mathcal{M}(N_{g,1})$ in Theorem~\ref{thm_Stukow} by adding generators $d_i$ $(i=1,\dots ,n-1)$, $a_{i;j}$ $(1\leq i\leq g-1, 1\leq j\leq n-1)$, $r_{i,j}$ $(1\leq i\leq g, 1\leq j\leq n-1)$, $s_{i,j}$ $(1\leq i<j\leq n-1)$ and $\bar{s}_{i,j}$ $(1\leq i<j\leq n-1)$ and the following relations for $1\leq{i,m}\leq{g}$, $1\leq{j<k}\leq{n-1}$ and $1\leq{l<t<k}$:

\begin{enumerate}
\item[(D0)] $[d_j,a_i]=[d_j,y]=[d_j,b]=[d_j,d_l]=[d_j,a_{i;l}]=[d_j,r_{i;l}]=[d_j,s_{l,t}]=[d_j,\bar{s}_{l,t}]=1,$
\item[(D1a)] $a_m(a_{i;k}a_i^{-1})a_m^{-1}=
		\left\{
		\begin{array}{ll}
		(a_{i;k}a_i^{-1})(a_{i-1;k}a_{i-1}^{-1}) \hspace{0.5cm}\text{for }m=i-1,\\
		(a_{i+1;k}a_{i+1}^{-1})^{-1}(a_{i;k}a_i^{-1})\hspace{0.5cm}\text{for }m=i+1,\\
		a_{i;k}a_i^{-1}\hspace{0.5cm}\text{for }m\neq{i-1,i+1},
		\end{array}
		\right.$
\item[(D1b)] $y(a_{i;k}a_i^{-1})y^{-1}=
		\left\{
		\begin{array}{ll}
		(a_{1;k}a_1^{-1})^{-1}r_{2;k}r_{1;k}d_{n-1}^{-2}\hspace{0.5cm}\text{for }i=1,\\
		(a_{2;k}a_2^{-1})r_{1;k}d_k^{-1}\hspace{0.5cm}\text{for }i=2,\\
		a_{i;k}a_i^{-1}\hspace{0.5cm}\text{for }i\geq 3,
		\end{array}
		\right.$
\item[(D1c)] $b(a_{i;k}a_i^{-1})b^{-1}=\\
		\left\{
		\begin{array}{ll}
		\{(a_{3;k}a_3^{-1})(a_{1;k}a_1^{-1})\}^{-1}(a_{1;k}a_1^{-1})\{(a_{3;k}a_3^{-1})(a_{1;k}a_1^{-1})\}\hspace{0.5cm}\text{for }i=1,\\
		\{(a_{3;k}a_3^{-1})(a_{1;k}a_1^{-1})\}^{-1}(a_{2;k}a_2^{-1})\{(a_{3;k}a_3^{-1})(a_{1;k}a_1^{-1})\}\hspace{0.5cm}\text{for }i=2,\\
		(a_{1;k}a_1^{-1})^{-1}(a_{3;k}a_3^{-1})(a_{1;k}a_1^{-1})\hspace{0.5cm}\text{for }i=3,\\
		(a_{4;k}a_4^{-1})(a_{3;k}a_3^{-1})(a_{1;k}a_1^{-1})d_k^{-1}\hspace{0.5cm}\text{for }i=4,\\
		a_{i;k}a_i^{-1}\hspace{0.5cm}\text{for }i\geq5,
		\end{array}
		\right.$
\item[(D1d)] $a_{m;l}(a_{i;k}a_i^{-1})a_{m;l}^{-1}=\\
		\left\{
		\begin{array}{ll}
		{[(s_{l,k}d_l^{-1})^{-1},(a_{m;k}a_m^{-1})^{-1}]}^{-1}(a_{i;k}a_i^{-1}){[(s_{l,k}d_l^{-1})^{-1},(a_{m;k}a_m^{-1})^{-1}]} \\ \text{for }m\leq{i-2},\\
		{[(a_{i-1;k}a_{i-1}^{-1})^{-1},(s_{l,k}d_l^{-1})^{-1}]}(a_{i;k}a_i^{-1})(s_{l,k}d_l^{-1})(a_{i-1;k}a_{i-1}^{-1})d_k^{-1} \\ \text{for }m=i-1,\\
		\{(s_{l,k}d_l^{-1})(a_{i;k}a_i^{-1})\}^{-1}(a_{i;k}a_i^{-1})\{(s_{l,k}d_l^{-1})(a_{i;k}a_i^{-1})\} \hspace{0.5cm}\text{for }m=i,\\
		(a_{i+1;k}a_{i+1}^{-1})^{-1}(s_{l,k}d_l^{-1})^{-1}(a_{i;k}a_i^{-1})d_k \hspace{0.5cm}\text{for }m=i+1,\\
		a_{i;k}a_i^{-1}\hspace{0.5cm}\text{for }m\geq{i+2},
		\end{array}
		\right.$
\item[(D1e)] $r_{m;l}(a_{i;k}a_i^{-1})r_{m;l}^{-1}=\\
		\left\{
		\begin{array}{ll}
		{[(s_{l,k}d_l^{-1})^{-1},r_{m;k}^{-1}]}^{-1}(a_{i;k}a_i^{-1}){[(s_{l,k}d_l^{-1})^{-1},r_{m;k}^{-1}]}\hspace{0.5cm}\text{for }m\leq{i-1},\\
		\{ r_{i;k}^{-1}(s_{l,k}d_l^{-1})^{-1}r_{i;k}\} (s_{l,k}d_l^{-1})(a_{i;k}a_i^{-1})\\
		(\bar{s}_{l,k;i}d_l^{-1})^{-1}\{ r_{i;k}^{-1}(s_{l,k}d_l^{-1})^{-1}r_{i;k}\} ^{-1}d_k^{-2}\hspace{0.5cm}\text{for }m=i,\\
		r_{i+1;k}^{-1}(s_{l,k}d_l^{-1})^{-1}r_{i+1;k}(\bar{s}_{l,k;i+1}d_l^{-1})(a_{i;k}a_i^{-1})d_k^2\hspace{0.5cm}\text{for }m=i+1,\\
		a_{i;k}a_i^{-1}\hspace{0.5cm}\text{for }m\geq{i+2},
		\end{array}
		\right.$
\item[(D1f)] $s_{l,t}(a_{i;k}a_i^{-1})s_{l,t}^{-1}=a_{i;k}a_i^{-1}$,
\item[(D1g)] $\bar{s}_{l,t}(a_{i;k}a_i^{-1})\bar{s}_{l,t}^{-1}=\\
		\left\{
		\begin{array}{ll}
		[(\bar{s}_{l,k}d_l^{-1})^{-1},s_{t,k}d_t^{-1}]^{-1}(s_{l,k}d_l^{-1})(a_{1;k}a_1^{-1})r_{1;k}(\bar{s}_{t,k}d_t^{-1})r_{1;k}^{-1}\\
(s_{l,k}d_l^{-1})^{-1}r_{1;k}(\bar{s}_{t,k}d_t^{-1})^{-1}r_{1;k}^{-1}[(\bar{s}_{l,k}d_l^{-1})^{-1},(s_{t,k}d_t^{-1})] \hspace{0.5cm}\text{for }i=1,\\
		\{[\bar{s}_{l,k}d_l^{-1},(s_{t,k}d_t^{-1})^{-1}][s_{l,k}d_l^{-1},r_{1;k}(\bar{s}_{t,k}d_t^{-1})^{-1}r_{1;k}^{-1}]\} (a_{i;k}a_i^{-1})\\
		\{[\bar{s}_{l,k}d_l^{-1},(s_{t,k}d_t^{-1})^{-1}][s_{l,k}d_l^{-1},r_{1;k}(\bar{s}_{t,k}d_t^{-1})^{-1}r_{1;k}^{-1}]\} ^{-1} \hspace{0.5cm}\text{for }i\geq2,
		\end{array}
		\right.$
\item[(D2a)] $a_mr_{i;k}a_m^{-1}=
		\left\{
		\begin{array}{ll}
		r_{i;k}r_{i-1;k}(a_{i-1;k}a_{i-1}^{-1})^{-1}r_{i;k}(a_{i-1;k}a_{i-1}^{-1})d_k\\
\text{for }m=i-1,\\
		(a_{i;k}a_i^{-1})^{-1}r_{i+1;k}^{-1}(a_{i;k}a_i^{-1})\hspace{0.5cm}\text{for }m=i,\\
		r_{i;k}\hspace{0.5cm}\text{for }m\neq{i-1,i},
		\end{array}
		\right.$
\item[(D2b)] $yr_{i;k}y^{-1}=
		\left\{
		\begin{array}{ll}
		\{ (a_{1;k}a_1^{-1})^{-1}r_{2;k}r_{1;k}\} ^{-1}r_{1;k}^{-1}\{ (a_{1;k}a_1^{-1})^{-1}r_{2;k}r_{1;k}\} \\
\text{for }i=1,\\
		(a_{1;k}a_1^{-1})r_{1;k}(a_{1;k}a_1^{-1})^{-1}r_{2;k}r_{1;k}d_{n-1}\hspace{0.5cm}\text{for }i=2,\\
		r_{i;k}\hspace{0.5cm}\text{for }i\geq 3,
		\end{array}
		\right.$
\item[(D2c)] $br_{i;k}b^{-1}=
		\left\{
		\begin{array}{ll}
		(a_{1;k}a_1^{-1})^{-1}(a_{3;k}a_3^{-1})^{-1}(a_{2;k}a_2^{-1})^{-1}\\
		r_{4;k}^{-1}(a_{3;k}a_3^{-1})r_{3;k}^{-1}(a_{2;k}a_2^{-1})r_{2;k}^{-1}(a_{1;k}a_1^{-1})\hspace{0.5cm}\text{for }i=1,\\
		\{ (a_{3;k}a_3^{-1})(a_{1;k}a_1^{-1})\} ^{-1}(a_{1;k}a_1^{-1})(a_{3;k}a_3^{-1})r_{2;k}\\
		r_{1;k}(a_{1;k}a_1^{-1})^{-1}r_{2;k}(a_{2;k}a_2^{-1})^{-1}r_{3;k}(a_{3;k}a_3^{-1})^{-1}\\
		r_{4;k}(a_{2;k}a_2^{-1})\{ (a_{3;k}a_3^{-1})(a_{1;k}a_1^{-1})\} \hspace{0.5cm}\text{for }i=2,\\
		\{ (a_{3;k}a_3^{-1})(a_{1;k}a_1^{-1})\} ^{-1}r_{4;k}^{-1}\\
		(a_{3;k}a_3^{-1})r_{3;k}^{-1}(a_{2;k}a_2^{-1})r_{2;k}^{-1}(a_{1;k}a_1^{-1})r_{1;k}^{-1}\\
		(a_{2;k}a_2^{-1})^{-1}r_{3;k}\\
		(a_{3;k}a_3^{-1})^{-1}(a_{1;k}a_1^{-1})^{-1}\{ (a_{3;k}a_3^{-1})(a_{1;k}a_1^{-1})\} \hspace{0.5cm}\text{for }i=3,\\
		r_{4;k}(a_{2;k}a_2^{-1})\\
		r_{1;k}(a_{1;k}a_1^{-1})^{-1}r_{2;k}(a_{2;k}a_2^{-1})^{-1}r_{3;k}(a_{3;k}a_3^{-1})^{-1}\\
		r_{4;k}(a_{3;k}a_3^{-1})(a_{1;k}a_1^{-1})\hspace{0.5cm}\text{for }i=4,\\
		r_{i;k}\hspace{0.5cm}\text{for }i\geq5,
		\end{array}
		\right.$
\item[(D2d)] $a_{m;l}r_{i;k}a_{m;l}^{-1}=\\
		\left\{
		\begin{array}{ll}
		{[(s_{l,k}d_l^{-1})^{-1},(a_{m;k}a_m^{-1})^{-1}]}^{-1}r_{i;k}{[(s_{l,k}d_l^{-1})^{-1},(a_{m;k}a_m^{-1})^{-1}]}\\
\text{for }m\leq{}i-2,\\
		\{ (s_{l,k}d_l^{-1})(a_{i-1;k}a_{i-1}^{-1})\} ^{-1}(a_{i-1;k}a_{i-1}^{-1})(s_{l,k}d_l^{-1})r_{i;k}\\
		r_{i-1;k}(a_{i-1;k}a_{i-1}^{-1})^{-1}r_{i;k}(\bar{s}_{l,k;i}d_l^{-1})\{ (s_{l,k}d_l^{-1})(a_{i-1;k}a_{i-1}^{-1})\} \\
\text{for }m=i-1,\\
		(a_{i;k}a_i^{-1})^{-1}(s_{l,k}d_l^{-1})^{-1}r_{i+1;k}^{-1}(a_{i;k}a_i^{-1})(\bar{s}_{l,k;i}d_l^{-1})^{-1}\hspace{0.5cm}\text{for }m=i,\\
		r_{i;k}\hspace{0.5cm}\text{for }m\geq{i+1},
		\end{array}
		\right.$
\item[(D2e)] $r_{m;l}r_{i;k}r_{m;l}^{-1}=
		\left\{
		\begin{array}{ll}
		{[(s_{l,k}d_l^{-1})^{-1},r_{m;k}^{-1}]}^{-1}r_{i;k}{[(s_{l,k}d_l^{-1})^{-1},r_{m;k}^{-1}]}\\
\text{for }m\leq{i-1},\\
		\{(s_{l,k}d_l^{-1})r_{i;k}\}^{-1}r_{i;k}\{(s_{l,k}d_l^{-1})r_{i;k}\}\hspace{0.5cm}\text{for }m=i,\\
		r_{i;k}\hspace{0.5cm}\text{for }m\geq{i+1},
		\end{array}
		\right.$
\item[(D2f)]	$s_{l,t}r_{i;k}s_{l,t}^{-1}=r_{i;k}$,
\item[(D2g)] $\bar{s}_{l,t}r_{i;k}\bar{s}_{l,t}^{-1}=\\
		\left\{
		\begin{array}{ll}
		[s_{t,k}d_t^{-1},(\bar{s}_{l,k}d_l^{-1})^{-1}][r_{1;k}(\bar{s}_{t,k}d_t^{-1})r_{1;k}^{-1},(s_{l,k}d_l^{-1})^{-1}]r_{1;k} \hspace{0.5cm}\text{for }i=1,\\
		\{ [\bar{s}_{l,k}d_l^{-1},(s_{t,k}d_t^{-1})^{-1}][s_{l,k}d_l^{-1},r_{1;k}(\bar{s}_{t,k}d_t^{-1})^{-1}r_{1;k}^{-1}]\} ^{-1}r_{1;k}\\
		\{ [\bar{s}_{l,k}d_l^{-1},(s_{t,k}d_t^{-1})^{-1}][s_{l,k}d_l^{-1},r_{1;k}(\bar{s}_{t,k}d_t^{-1})^{-1}r_{1;k}^{-1}]\}\hspace{0.5cm}\text{for }i\geq2,
		\end{array}
		\right.$
\item[(D3a)] $a_m(s_{j,k}d_j^{-1})a_m^{-1}=s_{j,k}d_j^{-1}$,
\item[(D3b)] $y(s_{j,k}d_j^{-1})y^{-1}=s_{j,k}d_j^{-1}$,
\item[(D3c)] $b(s_{j,k}d_j^{-1})b^{-1}=s_{j,k}d_j^{-1}$,
\item[(D3d)] $a_{m;l}(s_{j,k}d_j^{-1})a_{m;l}^{-1}=\\
		\left\{
		\begin{array}{ll}
		[(s_{l,k}d_l^{-1})^{-1},(a_{m;k}a_m^{-1})^{-1}]^{-1}(s_{j,k}d_j^{-1})[(s_{l,k}d_l^{-1})^{-1},(a_{m;k}a_m^{-1})^{-1}]\\
 \text{for }l>j,\\
		(a_{m;k}a_m^{-1})^{-1}(s_{j,k}d_j^{-1})(a_{m;k}a_m^{-1})\hspace{0.5cm}\text{for }l=j,\\
		s_{j,k}d_j^{-1}\hspace{0.5cm}\text{for }l=j,
		\end{array}
		\right.$
\item[(D3e)] $r_{m;l}(s_{j,k}d_j^{-1})r_{m;l}^{-1}=\\
		\left\{
		\begin{array}{ll}
		{[(s_{l,k}d_l^{-1})^{-1},r_{m;k}^{-1}]}^{-1}(s_{j,k}d_j^{-1}){[(s_{l,k}d_l^{-1})^{-1},r_{m;k}^{-1}]}\hspace{0.5cm}\text{for }l>j,\\
		r_{m;k}^{-1}(s_{j,k}d_j^{-1})r_{m;k}\hspace{0.5cm}\text{for }l=j,\\
		s_{j,k}d_j^{-1}\hspace{0.5cm}\text{for }l<j,
		\end{array}
		\right.$
\item[(D3f)] $s_{l,t}(s_{j,k}d_j^{-1})s_{l,t}^{-1}=\\
		\left\{
		\begin{array}{ll}
		\{(s_{t,k}d_t^{-1})(s_{j,k}d_j^{-1})\}^{-1}(s_{j,k}d_j^{-1})\{(s_{t,k}d_t^{-1})(s_{j,k}d_j^{-1})\}\hspace{0.5cm}\text{for }l=j,\\
		{[(s_{t,k}d_t^{-1})^{-1},(s_{l,k}d_l^{-1})^{-1}]}^{-1}(s_{j,k}d_j^{-1}){[(s_{t,k}d_t^{-1})^{-1},(s_{l,k}d_l^{-1})^{-1}]}\\
\text{for }l<j<t,\\
		(s_{l,k}d_l^{-1})^{-1}(s_{j,k}d_j^{-1})(s_{l,k}d_l^{-1})\hspace{0.5cm}\text{for }t=j,\\
		s_{j,k}d_j^{-1}\hspace{0.5cm}\text{for the other cases},
		\end{array}
		\right.$
\item[(D3g)] $\bar{s}_{l,t}(s_{j,k}d_j^{-1})\bar{s}_{l,t}^{-1}=\\
		\left\{
		\begin{array}{ll}
		\{[\bar{s}_{l,k}d_l^{-1},(s_{t,k}d_t^{-1})^{-1}][s_{l,k}d_l^{-1},r_{1;k}(\bar{s}_{t,k}d_t^{-1})^{-1}r_{1;k}^{-1}]\}^{-1}(s_{j,k}d_j^{-1})\\
		\{[\bar{s}_{l,k}d_l^{-1},(s_{t,k}d_t^{-1})^{-1}][s_{l,k}d_l^{-1},r_{1;k}(\bar{s}_{t,k}d_t^{-1})^{-1}r_{1;k}^{-1}]\}\hspace{0.5cm}\text{for }l>j,\\
		\{[\bar{s}_{j,k}d_j^{-1},(s_{t,k}d_t^{-1})^{-1}](s_{j,k}d_j^{-1})r_{1;k}(\bar{s}_{t,k}d_t^{-1})^{-1}r_{1;k}^{-1}\}^{-1}(s_{j,k}d_j^{-1})\\
		\{[\bar{s}_{j,k}d_j^{-1},(s_{t,k}d_t^{-1})^{-1}](s_{j,k}d_j^{-1})r_{1;k}(\bar{s}_{t,k}d_t^{-1})^{-1}r_{1;k}^{-1}\}\hspace{0.5cm}\text{for }l=j,\\
		{[(\bar{s}_{l,k}d_l^{-1})^{-1},s_{t,k}d_t^{-1}]}^{-1}(s_{j,k}d_j^{-1}){[(\bar{s}_{l,k}d_l^{-1})^{-1},s_{t,k}d_t^{-1}]}\hspace{0.5cm}\text{for }l<j<t,\\
		\{ (\bar{s}_{l,k}d_l^{-1})(s_{j,k}d_j^{-1})^{-1}\} ^{-1}(s_{j,k}d_j^{-1})\{ (\bar{s}_{l,k}d_l^{-1})(s_{j,k}d_j^{-1})^{-1}\} \hspace{0.5cm}\text{for }t=j,\\
		s_{j,k}d_j^{-1}\hspace{0.5cm}\text{for }t<j,
		\end{array}
		\right.$
\item[(D4a)] $a_m(\bar{s}_{j,k}d_j^{-1})a_m^{-1}=\\
		\left\{
		\begin{array}{ll}
		\{r_{1;k}^{-1}r_{2;k}^{-1}(a_{1;k}a_1^{-1})\}^{-1}(\bar{s}_{j,k}d_j^{-1})\{r_{1;k}^{-1}r_{2;k}^{-1}(a_{1;k}a_1^{-1})\}\hspace{0.5cm}\text{for }m=1,\\
		\bar{s}_{j,k}d_j^{-1}\hspace{0.5cm}\text{for }m\geq2,
		\end{array}
		\right.$
\item[(D4b)] $y(\bar{s}_{j,k}d_j^{-1})y^{-1}=\\
\{ r_{1;k}^{-1}(a_{1;k}a_1^{-1})^{-2}r_{2;k}r_{1;k}\} ^{-1}(\bar{s}_{j,k}d_j^{-1})\{ r_{1;k}^{-1}(a_{1;k}a_1^{-1})^{-2}r_{2;k}r_{1;k}\},$
\item[(D4c)] $b(\bar{s}_{j,k}d_j^{-1})b^{-1}=\\
\{r_{1;k}^{-1}(a_{2;k}a_2^{-1})^{-1}r_{4;k}^{-1}(a_{3;k}a_3^{-1})r_{3;k}^{-1}(a_{2;k}a_2^{-1})r_{2;k}^{-1}(a_{1;k}a_1^{-1})\}^{-1}(\bar{s}_{j,k}d_j^{-1})\\
		\{r_{1;k}^{-1}(a_{2;k}a_2^{-1})^{-1}r_{4;k}^{-1}(a_{3;k}a_3^{-1})r_{3;k}^{-1}(a_{2;k}a_2^{-1})r_{2;k}^{-1}(a_{1;k}a_1^{-1})\},$
\item[(D4d)] $a_{m;l}(\bar{s}_{j,k}d_j^{-1})a_{m;l}^{-1}=\\
		\left\{
		\begin{array}{ll}
		\{r_{1;k}^{-1}r_{2;k}^{-1}(a_{1;k}a_1^{-1})(\bar{s}_{l,k}d_l^{-1})^{-1}\}^{-1}(\bar{s}_{j,k}d_j^{-1})
		\{r_{1;k}^{-1}r_{2;k}^{-1}(a_{1;k}a_1^{-1})(\bar{s}_{l,k}d_l^{-1})^{-1}\} \\
\text{for }m=1,l<j,\\
		\{(\bar{s}_{l,k}d_l^{-1})^{-1}r_{1;k}^{-1}r_{2;k}^{-1}(a_{1;k}a_1^{-1})\}^{-1}(\bar{s}_{j,k}d_j^{-1})
		\{(\bar{s}_{l,k}d_l^{-1})^{-1}r_{1;k}^{-1}r_{2;k}^{-1}(a_{1;k}a_1^{-1})\}\\
\text{for }m=1,l>j,\\
		\{(a_{m-1;k}a_{m-1}^{-1})r_{m-1;k}\cdots(a_{2;k}a_2^{-1})r_{2;k}^{-1}(a_{1;k}a_1^{-1})\}^{-1}
		(\bar{s}_{j,k;m+1}d_j^{-1})\\
		\{(a_{m-1;k}a_{m-1}^{-1})r_{m-1;k}\cdots(a_{2;k}a_2^{-1})r_{2;k}^{-1}(a_{1;k}a_1^{-1})\}\hspace{0.5cm}\text{for }m\geq2,l=j,\\
		\{(a_{m-1;k}a_{m-1}^{-1})r_{m-1;k}\cdots(a_{2;k}a_2^{-1})r_{2;k}^{-1}(a_{1;k}a_1^{-1})\}^{-1}\\
		\{(\bar{s}_{l,k;m}d_l^{-1})^{-1}r_{m;k}^{-1}(\bar{s}_{l,k;m+1}d_l^{-1})\}^{-1}
		(\bar{s}_{j,k;m}d_j^{-1})\\
		\{(\bar{s}_{l,k;m}d_l^{-1})^{-1}r_{m;k}^{-1}(\bar{s}_{l,k;m+1}d_l^{-1})\}\\
		\{(a_{m-1;k}a_{m-1}^{-1})r_{m-1;k}\cdots(a_{2;k}a_2^{-1})r_{2;k}^{-1}(a_{1;k}a_1^{-1})\}\hspace{0.5cm}\text{for }m\geq2,l>j,\\
		\bar{s}_{j,k}d_j^{-1}\hspace{0.5cm}\text{for the other cases},
		\end{array}
		\right.$
\item[(D4e)] $r_{m;l}(\bar{s}_{j,k}d_j^{-1})r_{m;l}^{-1}=\\
		\left\{
		\begin{array}{ll}
		\{r_{1;k}^{-1}(\bar{s}_{l,k}d_l^{-1})^{-1}(s_{l,k}d_l^{-1})r_{1;k}\}^{-1}(\bar{s}_{j,k}d_j^{-1})\\
		\{r_{1;k}^{-1}(\bar{s}_{l,k}d_l^{-1})^{-1}(s_{l,k}d_l^{-1})r_{1;k}\}\hspace{0.5cm}\text{for }m=1,l<j,\\
		\{(s_{j,k}d_j^{-1})r_{1;k}\}^{-1}(\bar{s}_{j,k}d_j^{-1})\{(s_{j,k}d_j^{-1})r_{1;k}\}\hspace{0.5cm}\text{for }m=1,l=j,\\
		\{(\bar{s}_{l,k}d_l^{-1})^{-1}r_{1;k}^{-1}(s_{l,k}d_l^{-1})r_{1;k}\}^{-1}(\bar{s}_{j,k}d_j)^{-1}\\
		\{(\bar{s}_{l,k}d_l^{-1})^{-1}r_{1;k}^{-1}(s_{l,k}d_l^{-1})r_{1;k}\} \hspace{0.5cm}\text{for }m=1,l>j,\\
		\{(a_{m-1;k}a_{m-1}^{-1})r_{m-1;k}\cdots(a_{2;k}a_2^{-1})r_{2;k}^{-1}(a_{1;k}a_1^{-1})\}^{-1}\\
		(\bar{s}_{j,k;m}d_j^{-1})\\
		\{(a_{m-1;k}a_{m-1}^{-1})r_{m-1;k}\cdots(a_{2;k}a_2^{-1})r_{2;k}^{-1}(a_{1;k}a_1^{-1})\}\hspace{0.5cm}\text{for }m\geq2,l=j,\\
		\{(a_{m-1;k}a_{m-1}^{-1})r_{m-1;k}\cdots(a_{2;k}a_2^{-1})r_{2;k}^{-1}(a_{1;k}a_1^{-1})\}^{-1}\\
		\{(\bar{s}_{l,k;m}d_l^{-1})^{-1}r_{m;k}^{-1}(\bar{s}_{l,k;m}d_l^{-1})\}^{-1}\\
		(\bar{s}_{j,k;m}d_j^{-1})\\
		\{(\bar{s}_{l,k;m}d_l^{-1})^{-1}r_{m;k}^{-1}(\bar{s}_{l,k;m}d_l^{-1})\}\\
		\{(a_{m-1;k}a_{m-1}^{-1})r_{m-1;k}\cdots(a_{2;k}a_2^{-1})r_{2;k}^{-1}(a_{1;k}a_1^{-1})\}\hspace{0.5cm}\text{for }m\geq2,l>j,\\
		\bar{s}_{j,k}d_j^{-1}\hspace{0.5cm}\text{for }m\geq2,l<j,
		\end{array}
		\right.$
\item[(D4f)] $s_{l,t}(\bar{s}_{j,k}d_j^{-1})s_{l,t}^{-1}=\\
		\left\{
		\begin{array}{ll}
		(\bar{s}_{l,k}d_l^{-1})^{-1}(\bar{s}_{j,k}d_j^{-1})(\bar{s}_{l,k}d_l^{-1})\hspace{0.5cm}\text{for }t=j,\\
		{[(\bar{s}_{t,k}d_t^{-1})^{-1},(\bar{s}_{l,k}d_l^{-1})^{-1}]}^{-1}(\bar{s}_{j,k}d_j^{-1})\\
		{[(\bar{s}_{t,k}d_t^{-1})^{-1},(\bar{s}_{l,k}d_l^{-1})^{-1}]}\hspace{0.5cm}\text{for }l<j<t,\\
		\{(\bar{s}_{t,k}d_t^{-1})(\bar{s}_{j,k}d_j^{-1})\}^{-1}(\bar{s}_{j,k}d_j^{-1})\{(\bar{s}_{t,k}d_t^{-1})(\bar{s}_{j,k}d_j^{-1})\}\hspace{0.5cm}\text{for }l=j,\\
		\bar{s}_{j,k}d_j^{-1}\hspace{0.5cm}\text{for the other cases},
		\end{array}
		\right.$
\item[(D4g)] $\bar{s}_{l,t}(\bar{s}_{j,k}d_j^{-1})\bar{s}_{l,t}^{-1}=\\
		\left\{
		\begin{array}{ll}
		{[(\bar{s}_{t,k}d_t^{-1}),r_{1;k}^{-1}(s_{l,k}d_l^{-1})^{-1}r_{1;k}]}^{-1}(\bar{s}_{j,k}d_j^{-1})\\
		{[(\bar{s}_{t,k}d_t^{-1}),r_{1;k}^{-1}(s_{l,k}d_l^{-1})^{-1}r_{1;k}]} \hspace{0.5cm}\text{for }t<j,\\
		\{r_{1;k}^{-1}(s_{l,k}d_l^{-1})^{-1}r_{1;k}\}(\bar{s}_{j,k}d_j^{-1})\{r_{1;k}^{-1}(s_{l,k}d_l^{-1})^{-1}r_{1;k}\}^{-1}\hspace{0.5cm}\text{for }t=j,\\
		(s_{t,k}d_t^{-1})(\bar{s}_{j,k}d_j^{-1})(s_{t,k}d_t^{-1})^{-1}\hspace{0.5cm}\text{for }l=j,\\
		{[(\bar{s}_{l,k}d_l^{-1})^{-1},(s_{t,k}d_t^{-1})]}^{-1}(\bar{s}_{j,k}d_j^{-1}){[(\bar{s}_{l,k}d_l^{-1})^{-1},(s_{t,k}d_t^{-1})]}\hspace{0.5cm}\text{for }l>j,\\
		\bar{s}_{j,k}d_j^{-1}\hspace{0.5cm}\text{for }l<j<t.
		\end{array}
		\right.$
\end{enumerate}
\end{prop}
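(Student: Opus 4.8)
The plan is to argue by induction on the number of boundary components $n$, taking Stukow's presentation (Theorem~\ref{thm_Stukow}) as the base case $n=1$, where all the added generators and relations are vacuous. For the inductive step I would pass from $\mathcal{M}(N_{g,n-1})$ to $\mathcal{M}(N_{g,n})$ by capping the boundary component $\delta_{n-1}$. Capping $\delta_{n-1}$ with a once-punctured disk centred at $x_{n-1}$ yields the central extension
\[
1 \longrightarrow \langle d_{n-1}\rangle \longrightarrow \mathcal{M}(N_{g,n}) \longrightarrow \mathcal{M}(N_{g,n-1},x_{n-1}) \longrightarrow 1 ,
\]
in which $d_{n-1}=t_{\delta_{n-1}}$ is a central infinite-cyclic generator, and forgetting $x_{n-1}$ yields the Birman exact sequence
\[
1 \longrightarrow \pi_1(N_{g,n-1},x_{n-1}) \xrightarrow{\ j_{x_{n-1}}\ } \mathcal{M}(N_{g,n-1},x_{n-1}) \longrightarrow \mathcal{M}(N_{g,n-1}) \longrightarrow 1 ,
\]
where $\pi_1(N_{g,n-1},x_{n-1})$ is free since $N_{g,n-1}$ has nonempty boundary. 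Composing, the kernel $K$ of $\mathcal{M}(N_{g,n})\to\mathcal{M}(N_{g,n-1})$ is a central extension of a free group by $\langle d_{n-1}\rangle$. I would then feed the inductively known presentation of the quotient $Q=\mathcal{M}(N_{g,n-1})$ and a presentation of $K$ into the standard procedure for writing a presentation of $G$ from a short exact sequence $1\to K\to G\to Q\to 1$.

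Next I would identify the kernel generators with the newly added ones. Choosing the free generating set of $\pi_1(N_{g,n-1},x_{n-1})$ given by loops around the $g$ crosscaps and around $\delta_1,\dots,\delta_{n-2}$, Lemmas~\ref{pushing1}--\ref{pushing4} let me rewrite each point-pushing map $\psi_{x_{n-1}}(\overline{\gamma})$ as a product of Dehn twists and a $Y$-homeomorphism; the outcome is exactly the elements $a_{i;n-1}a_i^{-1}$, $r_{i;n-1}$, $s_{l,n-1}d_l^{-1}$ and $\bar s_{l,n-1}d_l^{-1}$, the correcting factors $a_i^{-1}$ and $d_l^{-1}$ being precisely the discrepancy between a point-push and a single twist recorded in Lemma~\ref{pushing2}. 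Because $\pi_1$ is free, $K\cong F\times\mathbb Z$ and its only internal relations are the centrality of $d_{n-1}$, which is subsumed in (D0).

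It then remains to assemble three families of relations. First, the relators of $Q$ must lift verbatim: I would check that the genus-part relations (A1)--(B8) and all lower-index relations (D$\ast$) hold unchanged in $\mathcal{M}(N_{g,n})$, which follows because the curves $\alpha_1,\dots,\alpha_{g-1},\beta,\mu_1$ and the lower-index configuration curves can be isotoped off $\delta_{n-1}$, so no extra boundary twist appears and every lifted $Q$-relator is trivial. Second, the centrality of $d_{n-1}$ and its commuting with all remaining generators give (D0). Third, the conjugation action of each quotient generator on each kernel generator must be expressed as a kernel word: conjugating $a_{i;k}a_i^{-1}$, $r_{i;k}$, $s_{j,k}d_j^{-1}$ and $\bar s_{j,k}d_j^{-1}$ (the choices labelled D1, D2, D3, D4) by $a_m$, $y$, $b$ and by the older point-pushes $a_{m;l}$, $r_{m;l}$, $s_{l,t}$, $\bar s_{l,t}$ (the choices labelled a--g) produces precisely relations (D1a)--(D4g). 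Each is verified by computing the image of the relevant curve under the mapping class using the braid relation for $Y$-homeomorphisms (Lemma~\ref{braid2}) and the additivity Lemma~\ref{pushing3}, then re-expressing the resulting point-push via Lemmas~\ref{pushing2} and \ref{pushing4}, reading off the powers of $d_j$ and $d_k$ from the chain and lantern relations (Lemmas~\ref{chain}, \ref{lantern}).

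I expect the main obstacle to be the explicit and lengthy verification of (D1a)--(D4g). The delicate point is not identifying the image curves but pinning down the exact powers of the boundary twists $d_j,d_k$ that occur: these live in the kernel $\langle d_{n-1}\rangle$ of the capping map, so they are invisible in $\mathcal{M}(N_{g,n-1})$ and cannot be inferred from the quotient — they must be extracted, case by case, from the orientation bookkeeping in Lemmas~\ref{pushing2} and \ref{pushing4} across the blowup. Having verified all the relations, I would close the argument by invoking the extension-presentation procedure to conclude that (A1)--(B8) together with (D0)--(D4g) form a complete defining set: since $\pi_1(N_{g,n-1},x_{n-1})$ is free, the kernel contributes no relations beyond (D0), and the conjugation relations (D1a)--(D4g) fully determine the action of $Q$ on $K$, so no further relations are required.
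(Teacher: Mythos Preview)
Your overall strategy—induction on $n$, capping $\delta_{n-1}$, and using the group-extension presentation recipe—matches the paper. But the two short exact sequences you wrote down are not correct for a non-orientable base, and this is exactly the point where the non-orientable case diverges from the orientable one.

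The capping map $\mathcal{M}(N_{g,n})\to\mathcal{M}(N_{g,n-1},x_0)$ is \emph{not} surjective: a diffeomorphism fixing $\delta_{n-1}$ pointwise preserves the local orientation near $\delta_{n-1}$, so after capping it preserves the local orientation at $x_0$. The image is the index-$2$ subgroup $\mathcal{M}^+(N_{g,n-1},x_0)$, and the relevant Birman kernel is the index-$2$ subgroup $\pi_1(N_{g,n-1})^+$ of two-sided loops, not the full $\pi_1$. Your chosen free generators of $\pi_1$ ``loops once around each crosscap'' are one-sided; their point-pushes reverse the local orientation at $x_0$ and do \emph{not} lift to $\mathcal{M}(N_{g,n})$ at all, so they cannot produce the kernel elements you list. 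Indeed there is a numerical mismatch: $\pi_1(N_{g,n-1},x_0)$ is free of rank $g+n-2$, while the list $a_{i;n-1}a_i^{-1}$, $r_{i;n-1}$, $s_{l,n-1}d_l^{-1}$, $\bar s_{l,n-1}d_l^{-1}$ has $2g+2n-5$ members—this is the rank of $\pi_1^+$, not of $\pi_1$.

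What the paper does (and what you must do) is work with $\mathcal{M}^+(N_{g,n-1},x_0)$ and $\pi_1(N_{g,n-1})^+$ throughout. A Reidemeister--Schreier computation gives a free basis $x_i^2$, $x_{i+1}x_i$, $y_l$, $x_1^{-1}y_lx_1$ of $\pi_1^+$; these are two-sided loops, so their pushes are products of Dehn twists (Lemma~\ref{pushing2}), and one reads off $j_{x_0}(x_i^2)=r_{i;n-1}$, $j_{x_0}(x_{i+1}x_i)=a_{i;n-1}a_i^{-1}$, $j_{x_0}(y_l)=s_{l,n-1}d_l^{-1}$, $j_{x_0}(x_1^{-1}y_lx_1)=\bar s_{l,n-1}d_l^{-1}$. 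The rest of your outline (lifting the $Q$-relators via an embedded $N_{g,n-1}\hookrightarrow N_{g,n}$, centrality of $d_{n-1}$ giving (D0), conjugation relations giving (D1a)--(D4g)) then goes through. For the boundary-twist powers $d_k^{\varepsilon_r}$, the paper organises the case-by-case check using the ``extended lantern'' relations (L$+$), (L$-$), (L0) for the map $\Delta$, which is more systematic than tracking orientations through Lemmas~\ref{pushing2} and \ref{pushing4} directly.
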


\section{Infinite presentation for $\mathcal{M}(N_{g,n})$}\label{section-mainthm}

The main theorem in this paper is as follows:

\begin{thm}\label{main-thm}
For $g\geq 1$ and $n\geq 0$, $\mathcal{M}(N_{g,n})$ has the following presentation:

generators: $\{ t_{c;+_c}, t_{c;-_c} \mid c:\hspace{0.05cm} \text{two-sided s.c.c. on }N_{g,n} \}$\\
\hspace{2.0cm}$\cup \{ Y_{\mu ,\alpha } \mid \mu :\hspace{0.05cm} \text{one-sided s.c.c. on } N_{g,n},\ \alpha :\hspace{0.05cm} \text{s.c.c. on } N_{g,n},\ |\mu \cap \alpha |=1\}$.\\
Denote the generating set by $X$. 

relations:
\begin{enumerate}
 \item[(0)] 
\begin{enumerate}
\item[(i)] $t_{c;\theta _c}=1$ when $\theta _c\in \{ +_c,-_c\}$ and $c$ bounds a disk or a M\"{o}bius band in $N_{g,n}$,
\item[(ii)] $t_{c;+_c}=t_{c^{-1};+_c}=t_{c;-_c}^{-1}$,
\item[(iii)] $Y_{\mu ,\alpha }=Y_{\mu ,\alpha ^{-1}}^{-1}=Y_{\mu ^{-1},\alpha }$,
\end{enumerate}
 \item[(I)] All the braid relations\\
 \[
\left\{ \begin{array}{lll}
(i) &t_{f(c);\theta ^\prime }=ft_{c;\theta }f^{-1}& \text{when }f_\ast (\theta )=\theta ^\prime \text{ and }f\in X,\\
&t_{f(c);\theta ^\prime }^{-1}=ft_{c;\theta }f^{-1}& \text{when }f_\ast (\theta )\not =\theta ^\prime \text{ and }f\in X,  \\
(ii) &Y_{f(\mu ),\beta }=fY_{\mu ,\alpha }f^{-1}& \text{when }f(\alpha )=\beta \text{ and }f\in X,\\
&Y_{f(\mu ),\beta }^{-1}=fY_{\mu ,\alpha }f^{-1}& \text{when }f(\alpha )=\beta ^{-1} \text{ and }f\in X,
 \end{array} \right.
 \]
 \item[(I\hspace{-0.04cm}I)] All the 2-chain relations,
 \item[(I\hspace{-0.04cm}I\hspace{-0.04cm}I)] All the lantern relations,
 \item[(I\hspace{-0.04cm}V)] All the relations in Lemma~\ref{pushing3},
 i.e. $Y_{\mu ,\alpha \beta }=Y_{\mu ,\alpha }Y_{\mu ,\beta }$,
 \item[(V)] All the relations in Lemma~\ref{pushing2},
 i.e. $Y_{\mu ,\alpha }=t_{\delta _1;\theta _{\delta _1}}^{\varepsilon _{\delta _1}}t_{\delta _2;\theta _{\delta _2}}^{-\varepsilon _{\delta _2}}$.
\end{enumerate} 
\end{thm}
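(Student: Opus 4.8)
The plan is to compare the group $G$ defined by the infinite presentation in the statement with $\mathcal{M}(N_{g,n})$ by producing mutually inverse homomorphisms, exactly as in Gervais' passage from a finite to an infinite presentation. Every relation (0)--(V) holds in $\mathcal{M}(N_{g,n})$: (0)(i) is Lemma~\ref{trivial}, (0)(ii),(iii) are the definitional identities for $t_{c;\pm_c}$ and $Y_{\mu,\alpha}$, and (I)--(V) are Lemmas~\ref{braid1}, \ref{braid2}, \ref{chain}, \ref{lantern}, \ref{pushing3} and \ref{pushing2}. Since $X$ generates $\mathcal{M}(N_{g,n})$ (classically, the mapping class group of a non-orientable surface is generated by Dehn twists and crosscap slides), the assignment of each generator of $X$ to the corresponding mapping class induces a surjective homomorphism $\pi\colon G\to\mathcal{M}(N_{g,n})$. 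Write $Y_{\mathrm{fin}}$ for the finite generating set of Theorem~\ref{thm_Stukow} (when $n\in\{0,1\}$) or of Proposition~\ref{thm_finite_presentation} (when $n\geq 2$); each of its elements is a specific Dehn twist or the crosscap pushing map $y=Y_{\mu_1,\alpha_1}$, hence already lies in $X\subseteq G$. Let $F$ be the free group on $Y_{\mathrm{fin}}$, let $q\colon F\to\mathcal{M}(N_{g,n})$ be the finite presentation's quotient map, so that $\ker q=\langle\langle R_{\mathrm{fin}}\rangle\rangle$, and let $p\colon F\to G$ send each generator to itself, so that $\pi p=q$ and $\ker p\subseteq\ker q$. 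It then suffices to prove two claims: (1) $p$ is surjective, and (2) $\ker q\subseteq\ker p$, i.e. every relation of $R_{\mathrm{fin}}$ holds in $G$.

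For Claim~(1) I would show that in $G$ every generator of $X$ equals a word in $Y_{\mathrm{fin}}$. The engine is the braid relation~(I): by induction, conjugation in $G$ by a word $w$ in $Y_{\mathrm{fin}}$ carries $t_{c;\theta}$ to $t_{w(c);\theta'}^{\pm 1}$ and carries $Y_{\mu,\alpha}$ to $Y_{w(\mu),w(\alpha)}^{\pm 1}$, where $w$ also denotes the mapping class it represents. Because $Y_{\mathrm{fin}}$ generates $\mathcal{M}(N_{g,n})$ and, by the change-of-coordinates principle, $\mathcal{M}(N_{g,n})$ acts transitively on the two-sided simple closed curves of each fixed topological type, any such $c$ can be carried to one of the standard curves $\alpha_i,\beta,\delta_i,\dots$ by some $w$; the braid relation then places $t_c$ in $\langle Y_{\mathrm{fin}}\rangle$. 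Twists along separating curves and along the remaining topological types are reduced to these by the $2$-chain relations~(II) and the lantern relations~(III), as in Luo's rewriting (Theorem~\ref{pres_Gervais}). Finally, a crosscap pushing map $Y_{\mu,\alpha}$ with $\alpha$ one-sided is rewritten as a product of two Dehn twists by relation~(V); one with $\alpha$ two-sided is conjugated to $y$ by~(I)(ii); and the general case is split by relation~(IV). In every case we land in $\langle Y_{\mathrm{fin}}\rangle$.

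For Claim~(2) I would verify that each defining relation of Theorem~\ref{thm_Stukow} and Proposition~\ref{thm_finite_presentation} is a consequence of (0)--(V). The relations fall into recognizable families: the commuting and braid relations (A1)--(A4) are the braid relations $T_0,T_1$ of~(I)(i); the relations (A5),(A6) and the auxiliary elements $b_i$ come from $2$-chain relations~(II); the relations involving $y$, namely (B1)--(B8), are instances of~(I)(ii) once one identifies $y(\alpha)$ for each relevant curve $\alpha$, combined with~(IV) and~(V); and the boundary relations (D0)--(D4g) are obtained the same way after reading off the images of the relevant curves under the generators, using~(V) to convert crosscap pushing maps into twists and the lantern relation~(III) wherever the boundary twists $d_i$ occur. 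This step is long but mechanical: it is a curve-by-curve check against Figures~\ref{scc_nonorisurf1}--\ref{sigmabar_jki}.

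Granting Claims~(1) and~(2), we get $\ker p=\ker q$, and since $p$ is surjective, $\pi$ descends to an isomorphism $G\cong F/\ker q\cong\mathcal{M}(N_{g,n})$, which is the assertion (the case $n\in\{0,1\}$ recovers the second author's theorem, so the genuinely new content is $n\geq 2$). The main obstacle is Claim~(1): although the braid relations reduce everything to a transitivity statement, one must ensure that the conjugating mapping classes realizing the change-of-coordinates moves are themselves expressible in $Y_{\mathrm{fin}}$ within $G$ using only (0)--(V), and that every topological type of two-sided curve --- together with every crosscap configuration, including the boundary-parallel data new for $n\geq 2$ --- is reached. Organizing this reduction so that it proceeds through curves whose twists are already known to lie in $\langle Y_{\mathrm{fin}}\rangle$, avoiding circularity, is the delicate point; the verification of Claim~(2), though lengthy, is routine by comparison.
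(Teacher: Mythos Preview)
Your proposal is correct and follows essentially the same architecture as the paper: the map $\pi$ is the paper's $\varphi$, your Claim~(2) is the paper's verification that $\psi$ is a homomorphism (with the (A)- and (B)-relations handled by citing \cite{Omori} and the (D)-relations handled via the extended lantern machinery of Section~\ref{subsection_extended_lantern}), and your Claim~(1) is the paper's surjectivity of $\psi$, carried out by the same case analysis on topological types of curves that you sketch. One small inaccuracy: relations (A5), (A6), (A9) are not themselves $2$-chain relations but rather hold in an orientable subsurface and are therefore consequences of (0)--(III) via Remark~\ref{orientable}; and for the (D)-relations the paper's ``mechanical'' step is organized through the map $\Delta$ and the relations (L$+$), (L$-$), (L0), which is the one piece of bookkeeping machinery your sketch does not anticipate.
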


The second author~\cite{Omori} proved Theorem~\ref{main-thm} when $g\geq 1$ and $n\in \{ 0,1\}$. Since we do not distinguish $t_{c;+_c}$, $t_{c^{-1};+_c}$ and $t_{c;-_c}^{-1}$, and also do not distinguish $Y_{\mu ,\alpha }$, $Y_{\mu ,\alpha ^{-1}}^{-1}$ and $Y_{\mu ^{-1},\alpha }$ in \cite{Omori}, the presentation in Theorem~3.1 of \cite{Omori} is different from the presentation in Theorem~\ref{main-thm}. However, these presentation are equivalent by Relation~(0)(ii) and (0)(iii). In fact, we can apply the proof of Theorem~3.1 in \cite{Omori} to the presentation in Theorem~\ref{main-thm}. In (I) and (I\hspace{-0.04cm}V) one can substitute the right hand side of (V) for each generator $Y_{\mu ,\alpha }$ with one-sided $\alpha $. Then one can remove the generators $Y_{\mu ,\alpha }$ with one-sided $\alpha $ and relations~(V) from the presentation.

We denote by $G$ the group which has the presentation in Theorem~\ref{main-thm}. Let $\iota :\Sigma _{h,m}\hookrightarrow N_{g,n}$ be a smooth embedding and let $G^\prime $ be the group whose presentation has all Dehn twists along simple closed curves on $\Sigma _{h,m}$ as generators and Relations~(0$^\prime $), (I$^\prime $), (I\hspace{-0.04cm}I) and (I\hspace{-0.04cm}I\hspace{-0.04cm}I) in Theorem~\ref{pres_Gervais}. By Theorem~\ref{pres_Gervais}, $\mathcal{M}(\Sigma _{h,m})$ is isomorphic to $G^\prime $, and we have the homomorphism $G^\prime \rightarrow G$ defined by the correspondence of $t_{c;+_c}$ to $t_{\iota (c);\iota _\ast(+_c)}$.
 Then we remark the following.  

\begin{rem}\label{orientable}
The composition $\iota _\ast :\mathcal{M}(\Sigma _{h,m})\rightarrow G$ of the isomorphism $\mathcal{M}(\Sigma _{h,m})\rightarrow G^\prime $ and the homomorphism $G^\prime \rightarrow G$ is a homomorphism. In particular, if a product $t_{c_1;\theta _{c_1}}^{\varepsilon _1}t_{c_2;\theta _{c_2}}^{\varepsilon _2}\cdots t_{c_k;\theta _{c_k}}^{\varepsilon _k}$ of Dehn twists along simple closed curves $c_1$, $c_2$, $\dots $, $c_k$ on a connected compact orientable subsurface of $N_{g,n}$ is equal to the identity map in the mapping class group of the subsurface, then $t_{c_1;\theta _{c_1}}^{\varepsilon _1}t_{c_2;\theta _{c_2}}^{\varepsilon _2}\cdots t_{c_k;\theta _{c_k}}^{\varepsilon _k}$ is equal to $1$ in $G$. That means such a relation $t_{c_1;\theta _{c_1}}^{\varepsilon _1}t_{c_2;\theta _{c_2}}^{\varepsilon _2}\cdots t_{c_k;\theta _{c_k}}^{\varepsilon _k}=1$ is obtained from Relations~(0), (I), (I\hspace{-0.04cm}I) and (I\hspace{-0.04cm}I\hspace{-0.04cm}I).
\end{rem}

Set $X^\pm :=X\cup \{ x^{-1} \mid x\in X \}$. By Relation~(I), we have the following lemma.

\begin{lem}\label{braid3}
For $f\in G$, suppose that $f=f_1f_2\dots f_k$, where $f_1$, $f_2$, $\dots $, $f_k\in X^\pm $. Then we have
 \[
\left\{ \begin{array}{lll}
(i) &t_{f(c);\theta ^\prime }=ft_{c;\theta }f^{-1}& \text{when }f_\ast (\theta )=\theta ^\prime ,\\
&t_{f(c);\theta ^\prime }^{-1}=ft_{c;\theta }f^{-1}& \text{when }f_\ast (\theta )\not =\theta ^\prime ,  \\
(ii) &Y_{f(\mu ),\beta }=fY_{\mu ,\alpha }f^{-1}& \text{when }f(\alpha )=\beta,\\
&Y_{f(\mu ),\beta }^{-1}=fY_{\mu ,\alpha }f^{-1}& \text{when }f(\alpha )=\beta ^{-1}.
 \end{array} \right.
 \]
\end{lem}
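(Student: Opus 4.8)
The plan is to prove the statement by induction on the word length $k$, after first recasting Relation~(I) into a form that is insensitive to the choice of orientation. The key observation is that, using Relation~(0)(ii), the two cases of Relation~(I)(i) merge into the single identity $ft_{c;\theta }f^{-1}=t_{f(c);f_\ast (\theta )}$ for every $f\in X$ and every two-sided simple closed curve $c$: when $f_\ast (\theta )=\theta ^\prime $ this is the first line of Relation~(I)(i), and when $f_\ast (\theta )\neq \theta ^\prime $ the second line together with $t_{f(c);\theta ^\prime }^{-1}=t_{f(c);f_\ast (\theta )}$ (Relation~(0)(ii)) yields the same conclusion. Likewise, using Relation~(0)(iii) in the form $Y_{f(\mu ),\beta }^{-1}=Y_{f(\mu ),\beta ^{-1}}$, the two cases of Relation~(I)(ii) collapse to $fY_{\mu ,\alpha }f^{-1}=Y_{f(\mu ),f(\alpha )}$ for every $f\in X$. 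Proving the lemma is then equivalent to establishing these two unified identities for an arbitrary product $f=f_1f_2\cdots f_k$ of elements of $X^\pm $.

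First I would settle the base case $k=1$. For $f\in X$ this is exactly the unified form of Relation~(I) noted above. For $f=x^{-1}$ with $x\in X$, I would apply the unified Relation~(I) to $x$ with the curve $x^{-1}(c)$ in place of $c$: writing $d=x^{-1}(c)$ so that $x(d)=c$, the identity $xt_{d;\eta }x^{-1}=t_{x(d);x_\ast (\eta )}$ rearranges to $x^{-1}t_{c;\zeta }x=t_{x^{-1}(c);(x^{-1})_\ast (\zeta )}$ upon setting $\zeta =x_\ast (\eta )$, which is precisely the desired identity for $f=x^{-1}$. The argument for $Y_{\mu ,\alpha }$ is identical, reading $xY_{\mu ,\alpha }x^{-1}=Y_{x(\mu ),x(\alpha )}$ backwards.

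For the inductive step I would write $f=f_1g$ with $g=f_2\cdots f_k$ and apply the inductive hypothesis to $g$ followed by the base case to $f_1$. For the Dehn-twist identity this gives
\[
ft_{c;\theta }f^{-1}=f_1\bigl(gt_{c;\theta }g^{-1}\bigr)f_1^{-1}=f_1t_{g(c);g_\ast (\theta )}f_1^{-1}=t_{f_1(g(c));(f_1)_\ast (g_\ast (\theta ))}=t_{f(c);f_\ast (\theta )},
\]
using $(f_1)_\ast \circ g_\ast =f_\ast $ and $f_1\circ g=f$; the computation for $Y_{\mu ,\alpha }$ is verbatim the same with $t_{c;\theta }$ replaced by $Y_{\mu ,\alpha }$ and the single curve $c$ replaced by the pair $(\mu ,\alpha )$. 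Finally, unwinding the unified identities back into the two-case form stated in the lemma (again via Relations~(0)(ii) and (0)(iii)) completes the proof.

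I do not expect a serious obstacle here. Once the orientation bookkeeping is absorbed into the unified identities, the argument is a routine induction; the only points requiring genuine care are the consistent composition of the orientation pushforwards $f_\ast $ across the inductive step and the verification that the base case covers inverses $x^{-1}$ of generators as well as the generators themselves.
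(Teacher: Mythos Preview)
Your proof is correct and is precisely the argument the paper has in mind: the paper does not spell out a proof at all, stating only ``By Relation~(I), we have the following lemma,'' and your induction on $k$ together with the unification via Relations~(0)(ii)--(iii) is exactly the routine verification this sentence is pointing to.
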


The next lemma follows from an argument of the combinatorial group theory (for instance, see \cite[Lemma~4.2.1, p42]{Johnson1}).

\begin{lem}\label{combinatorial}
For groups $\Gamma $, $\Gamma ^\prime $ and $F$, a surjective homomorphism $\pi :F\rightarrow \Gamma $ and a homomorphism $\nu :F\rightarrow \Gamma ^\prime $, we define a map $\nu ^\prime :\Gamma \rightarrow \Gamma ^\prime $ by $\nu ^\prime (x):=\nu (\widetilde{x})$ for $x\in \Gamma $, where $\widetilde{x}\in F$ is a lift of $x$ with respect to $\pi $ (see the diagram below).

Then if $\ker \pi \subset \ker \nu $, $\nu ^\prime $ is well-defined and a homomorphism.
\end{lem}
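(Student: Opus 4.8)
The plan is to prove Lemma~\ref{combinatorial} by first verifying that $\nu'$ is well-defined, then checking it respects the group operation. Both parts hinge on the single hypothesis $\ker \pi \subset \ker \nu$.

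First I would establish well-definedness. The map $\nu'$ is defined by choosing, for each $x \in \Gamma$, some lift $\widetilde{x} \in F$ with $\pi(\widetilde{x}) = x$; such a lift exists because $\pi$ is surjective. The potential ambiguity is the choice of lift. Suppose $\widetilde{x}$ and $\widetilde{x}'$ are two lifts of the same $x$, so $\pi(\widetilde{x}) = \pi(\widetilde{x}') = x$. Then $\pi(\widetilde{x}^{-1}\widetilde{x}') = x^{-1}x = 1$, so $\widetilde{x}^{-1}\widetilde{x}' \in \ker \pi \subset \ker \nu$. Applying $\nu$ gives $\nu(\widetilde{x})^{-1}\nu(\widetilde{x}') = \nu(\widetilde{x}^{-1}\widetilde{x}') = 1$, hence $\nu(\widetilde{x}) = \nu(\widetilde{x}')$. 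Thus the value $\nu'(x) = \nu(\widetilde{x})$ does not depend on the chosen lift, and $\nu'$ is a well-defined map.

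Next I would verify the homomorphism property. For $x, y \in \Gamma$, fix lifts $\widetilde{x}, \widetilde{y} \in F$. Since $\pi$ is a homomorphism, $\pi(\widetilde{x}\widetilde{y}) = \pi(\widetilde{x})\pi(\widetilde{y}) = xy$, so $\widetilde{x}\widetilde{y}$ is a legitimate lift of $xy$. By well-definedness we may use this particular lift to compute $\nu'(xy)$, giving $\nu'(xy) = \nu(\widetilde{x}\widetilde{y}) = \nu(\widetilde{x})\nu(\widetilde{y}) = \nu'(x)\nu'(y)$, where the middle equality uses that $\nu$ is a homomorphism. This shows $\nu'$ is a homomorphism, completing the proof.

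There is no real obstacle here; the argument is a standard diagram-chase in combinatorial group theory and is cited as such in the text (\cite[Lemma~4.2.1]{Johnson1}). The only point requiring any care is to ensure that the computation of $\nu'(xy)$ uses the lift $\widetilde{x}\widetilde{y}$ rather than an arbitrary independent lift of $xy$; this is precisely where well-definedness is invoked, so the two parts of the proof must be carried out in that order.
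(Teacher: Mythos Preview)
Your proof is correct and is the standard argument. The paper itself does not give a proof of this lemma at all; it simply states that the result follows from combinatorial group theory and cites \cite[Lemma~4.2.1, p42]{Johnson1}, so your write-up in fact supplies more detail than the paper does.
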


\[
\xymatrix{
 F \ar@{->>}[d]_\pi \ar[dr]^{\nu } &  \\
\Gamma  \ar@{-->}[r]_{\nu ^\prime}  & \Gamma ^\prime  \\
}
\]

We start the proof of Theorem~\ref{main-thm}.
When $n\in \{ 0,1\}$, we proved Theorem~\ref{main-thm} in \cite{Omori}. Assume $g\geq 1$ and $n\geq 2$. 
Then we obtain Theorem~\ref{main-thm} if $\mathcal{M}(N_{g,n})$ is isomorphic to $G$. Let $\varphi :G\rightarrow \mathcal{M}(N_{g,n})$ be the surjective homomorphism defined by $\varphi (t_{c;+_c}):=t_{c;+_c}$, $\varphi (t_{c;-_c}):=t_{c;-_c}$ and $\varphi (Y_{\mu ,\alpha }):=Y_{\mu ,\alpha }$. 

Denote by $X_0\subset \mathcal{M}(N_{g,n})$ the generating set of the finite presentation for $\mathcal{M}(N_{g,n})$ in Proposition~\ref{thm_finite_presentation}. Let $F(X_0)$ be the free group which is freely generated by $X_0$ and let $\pi :F(X_0)\rightarrow \mathcal{M}(N_{g,n})$ be the natural projection. We define the homomorphism $\nu :F(X_0)\rightarrow G$ by $\nu (a_i):=a_i$, $\nu (b):=b$, $\nu (y):=y$, $\nu (a_{i;j}):=a_{i;j}$, $\nu (r_{i;j}):=r_{i;j}$, $\nu (s_{i,j}):=s_{i,j}$ and $\nu (\bar{s}_{i,j}):=\bar{s}_{i,j}$, and a map $\psi =\nu ^\prime :\mathcal{M}(N_{g,n})\rightarrow G$ by $\psi (a_i^{\pm 1}):=a_i^{\pm 1}$, $\psi (b^{\pm 1}):=b^{\pm 1}$, $\psi (y^{\pm 1}):=y^{\pm 1}$, $\psi (a_{i;j}^{\pm 1}):=a_{i;j}^{\pm 1}$, $\psi (r_{i;j}^{\pm 1}):=r_{i;j}^{\pm 1}$, $\psi (s_{i,j}^{\pm 1}):=s_{i,j}^{\pm 1}$, $\psi (\bar{s}_{i,j}^{\pm 1}):=\bar{s}_{i,j}^{\pm 1}$ and $\psi (f):=\nu (\widetilde{f})$ for the other $f\in \mathcal{M}(N_{g,n})$, where $\widetilde{f}\in F(X_0)$ is a lift of $f$ with respect to $\pi $ (see the diagram below).

\[
\xymatrix{
 F(X_0) \ar@{->>}[d]_\pi \ar[dr]^{\nu } &  \\
\mathcal{M}(N_{g,n}) \ar@{-->}[r]_{\psi }  & G  \\
}
\]

If $\psi $ is a homomorphism, $\varphi \circ \psi ={\rm id}_{\mathcal{M}(N_{g,n})}$ by the definition of $\varphi $ and $\psi $. Thus it is sufficient to show that $\psi $ is a homomorphism and surjective for proving that $\psi $ is isomorphism.

\subsection{Proof that $\psi $ is a homomorphism}

By Lemma~\ref{combinatorial}, if the relations of the presentation in Proposition~\ref{thm_finite_presentation} are obtained from Relations~(0), (I), (I\hspace{-0.04cm}I), (I\hspace{-0.04cm}I\hspace{-0.04cm}I), (I\hspace{-0.04cm}V) and (V), then $\psi $ is a homomorphism. 

Let $N$ be the subsurface of $N_{g,n}$ as in Figure~\ref{nonorisurf_emb1_boundary}. $N$ is diffeomorphic to $N_{g,1}$ and includes simple closed curves $\alpha _1, \dots , \alpha _{g-1}$, $\mu $ and $\beta $. We regard $\mathcal{M}(N)$ as a subgroup of $\mathcal{M}(N_{g,n})$. Relations~(A1), $\dots $, (A9b) and (B1), $\dots $, (B8) of the presentation for $\mathcal{M}(N_{g,n})$ in Proposition~\ref{thm_finite_presentation} are relations of $\mathcal{M}(N)\cong \mathcal{M}(N_{g,1})$. By Theorem~3.1 in \cite{Omori}, Relations~(A1), $\dots $, (A9b) and (B1), $\dots $, (B8) are obtained from Relations~(0), (I), (I\hspace{-0.04cm}I), (I\hspace{-0.04cm}I\hspace{-0.04cm}I), (I\hspace{-0.04cm}V) and (V).

By arguments in the last part of Section~\ref{proof_finite_presentation} and Section~\ref{section_epsilon_r}, Relations~(D0)-(D4g) are obtained from Relations~(I) and (I\hspace{-0.04cm}I\hspace{-0.04cm}I). 
We have proved that $\psi $ is a homomorphism.
 
\begin{figure}[h]
\includegraphics[scale=0.65]{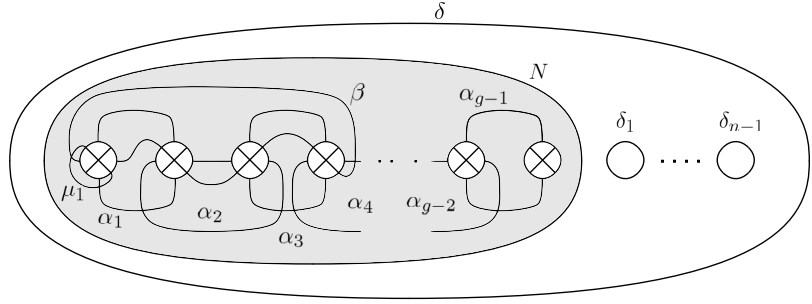}
\caption{The subsurface $N$ of $N_{g,n}$ which is diffeomorphic to $N_{g,1}$.}\label{nonorisurf_emb1_boundary}
\end{figure}

\subsection{Surjectivity of $\psi $}

For some convenience, we write $t_{c;+_c}=t_c$ in this subsection. We show that there exist lifts of $t_c$'s and $Y_{\mu ,\alpha }$'s with respect to $\psi $ for cases below, to prove the surjectivity of $\psi $. 
\begin{itemize}
\item[(1)] $t_c$; $c$ is non-separating and $N_{g,n}-c$ is non-orientable,
\item[(2)] $t_c$; $c$ is non-separating and $N_{g,n}-c$ is orientable,
\item[(3)] $t_c$; $c$ is separating,
\item[(4)] $Y_{\mu ,\alpha }$; $\alpha $ is two-sided and $N_{g,n}-\alpha $ is non-orientable,
\item[(5)] $Y_{\mu ,\alpha }$; $\alpha $ is two-sided and $N_{g,n}-\alpha $ is orientable,
\item[(6)] $Y_{\mu ,\alpha }$; $\alpha $ is one-sided.
\end{itemize}

Set $X_0^\pm :=X_0\cup \{ x^{-1} \mid x\in X_0 \}$. For a simple closed curve $c$ on $N_{g,n}$, we denote by $(N_{g,n})_c$ the surface obtained from $N_{g,n}$ by cutting $N_{g,n}$ along $c$ and denote by $\Sigma $ the component of $(N_{g,n})_c$ which does not include $\delta $. 


For generators of type~(1), (2), (4), (5), (6), by similar arguments in Section~3.2 of \cite{Omori}, there exist their lifts with respect to $\psi $. We note that we use the existence for lifts of generators of type~(3) for the proof of the existence for lifts of generators of type~(6).  

\textbf{Case~(3) where $\Sigma $ is diffeomorphic to $\Sigma _{0,m+1}$ for $m\geq 0$.} 
We proceed by induction on $m\geq 0$. When $m=0$, $t_c$ is trivial by Relation~(0). When $m=1$, $c=\delta _i$ for some $1\leq i\leq b-1$. Hence $d_i$ is the lift of $t_c$. 

When $m=2$, there exists a product $f=f_1f_2\cdots f_k\in \mathcal{M}(N_{g,n})$ of $f_1$, $f_2$, $\cdots $, $f_k \in X_0^\pm $ which satisfy either $c=f(\sigma _{i,j})$ or $c=f(\bar{\sigma }_{i,j})$ for some $1\leq i<j\leq b-1$. 
Thus, if $c=f(\sigma _{i,j})$, we have
\begin{eqnarray*}
\psi (fs_{i,j}f^{-1})
&=& f_1 f_2\cdots f_ks_{i,j}f_k^{-1}\cdots f_2^{-1}f_1^{-1}\\
&\stackrel{\text{Lem.~\ref{braid3}}}{=}& t_{f(\sigma _{i,j})}^\varepsilon \\
&=& t_c^\varepsilon ,
\end{eqnarray*}
where $\varepsilon $ is $1$ or $-1$. Thus $fs_{i,j}^\varepsilon f^{-1}\in \mathcal{M}(N_{g,n})$ is a lift of $t_c\in G$ with respect to $\psi $ for some $\varepsilon \in \{ -1,1\}$. By a similar argument, when $c=f(\bar{\sigma }_{i,j})$, $f\bar{s}_{i,j}^\varepsilon f^{-1}\in \mathcal{M}(N_{g,n})$ is also a lift of $t_c\in G$ with respect to $\psi $ for some $\varepsilon \in \{ -1,1\}$.

For $m\geq 3$, there exists a simple closed curve $c^\prime $ on $\Sigma $ such that $c^\prime $ separates $\Sigma $ into $\Sigma ^\prime $ and $\Sigma ^{\prime \prime }$ which are diffeomorphic to $\Sigma _{0,4}$ and $\Sigma _{0,m-1}$, respectively, and $c\subset \Sigma ^\prime $. By using a lantern relation on $\Sigma ^\prime $, there exist simple closed curves $c_1=c^\prime ,c_2,\dots ,c_6$ on $\Sigma ^\prime $ such that $t_c=t_{c_1}^{\varepsilon _1}t_{c_2}^{\varepsilon _2}\cdots t_{c_6}^{\varepsilon _6}\in G$ for some $\varepsilon _1, \varepsilon _2,\dots ,\varepsilon _6\in \{ -1,1\}$. Since each $c_i$ $(i=1,2,\dots ,6)$ bounds a subsurface of $N_{g,n}$ which does not include $c$ and is diffeomorphic to $\Sigma _{0,m_i+1}$ for some $m_i<m$, by the inductive assumption, there exist lifts $h_1,\dots ,h_6\in \mathcal{M}(N_{g,n})$ of $t_{c_1},\dots ,t_{c_6}\in G$ with respect to $\psi $, respectively. Thus $h_1^{\varepsilon _1}h_2^{\varepsilon _2}\cdots h_6^{\varepsilon _6}\in \mathcal{M}(N_{g,n})$ is a lift of $t_c$ with respect to $\psi $.

\textbf{Case~(3) where $\Sigma $ is diffeomorphic to $\Sigma _{h,m+1}$ for $h\geq 1$ $m\geq 0$.}  
In this case, there exists a simple closed curve $c^\prime $ on $\Sigma $ such that $c^\prime $ separates $\Sigma $ into $\Sigma ^\prime $ and $\Sigma ^{\prime \prime }$ which are diffeomorphic to $\Sigma _{h,2}$ and $\Sigma _{0,m+1}$, respectively. Then there exists a $2h+1$-chain $c_1$, $c_2$, $\dots $, $c_{2h+1}$ on $\Sigma ^\prime $ such that $\mathcal{N}_{N_{g,n}}(c_1\cup c_2\cup \cdots \cup c_{2h+1})=\Sigma ^\prime $. By the chain relation, $(t_{c_1}^{\varepsilon _1}t_{c_2}^{\varepsilon _2}\cdots t_{c_{2h+1}}^{\varepsilon _{2h+1}})^{2h+2}=t_ct_{c^\prime }^{\varepsilon ^\prime }$ for some $\varepsilon _1$, $\varepsilon _2$, $\dots $, $\varepsilon _{2h+1}, \varepsilon ^\prime \in \{ -1,1\}$. Since $t_{c_1}$, $t_{c_2}$, $\dots $, $t_{c_{2h+1}}$ are Dehn twists of type~(1) and $c^\prime $ bounds $\Sigma ^{\prime \prime }$, $t_{c_1}$, $t_{c_2}$, $\dots $, $t_{c_{2h+1}}, t_{c^\prime }\in G$ have lifts $h_1$, $h_2$, $\dots $, $h_{2h+1},h^\prime \in \mathcal{M}(N_{g,n})$ with respect to $\psi $, respectively. Thus we have
\begin{eqnarray*}
\psi ((h_1^{\varepsilon _1}h_2^{\varepsilon _2}\dots h_{2h+1}^{\varepsilon _{2h+1}})^{2h+2}(h^\prime )^{-\varepsilon ^\prime })&=& (t_{c_1}^{\varepsilon _1}t_{c_2}^{\varepsilon _2}\cdots t_{c_{2h+1}}^{\varepsilon _{2h+1}})^{2h+2}t_{c^\prime }^{-\varepsilon ^\prime }\\
&\stackrel{(0),(\text{I}),(\text{I\hspace{-0.04cm}I}),(\text{I\hspace{-0.04cm}I\hspace{-0.04cm}I})}{=}& t_c.
\end{eqnarray*}
We remark that the last relation is obtained from Theorem~\ref{pres_Gervais} and Remark~\ref{orientable}. Thus $((h_1^{\varepsilon _1}h_2^{\varepsilon _2}\dots h_{2h+1}^{\varepsilon _{2h+1}})^{2h+2}(h^\prime )^{-\varepsilon ^\prime }\in \mathcal{M}(N_{g,n})$ is a lift of $t_c\in G$ with respect to $\psi $.

\textbf{Case~(3) where $\Sigma $ is diffeomorphic to $N_{h,m+1}$ for $h\geq 1$ $m\geq 0$.}
We proceed by induction on $m\geq 0$. When $m=0$, by similar arguments in Section~3.2 in \cite{Omori}, there exists their a lift of $t_c\in G$ with respect to $\psi $.

When $m=1$, we proceed by induction on $h\geq 1$. When $h=1$, there exists a product $f=f_1f_2\cdots f_k\in \mathcal{M}(N_{g,n})$ of $f_1$, $f_2$, $\cdots $, $f_k \in X_0^\pm $ such that $c=f(\rho _{1;j})$ for some $1\leq j\leq n-1$. By a similar argument in the case where $\Sigma $ is diffeomorphic to $\Sigma _{0,m+1}$, we can obtain a lift of $t_c$ with respect to $\psi $. Suppose $h\geq 2$. Then there exist simple closed curves $c_1$ and $c_2$ on $\Sigma $ such that $c_1\sqcup c_2$ separates $\Sigma $ into $\Sigma ^\prime $, $\Sigma ^{\prime \prime }$ and $\Sigma ^{\prime \prime \prime }$ which are diffeomorphic to $\Sigma _{0,4}$, $N_{1,1}$ and $N_{h-1,1}$, respectively, and $c\subset \Sigma ^\prime $. By using a lantern relation on $\Sigma ^\prime $, there exist simple closed curves $c_3,\dots ,c_6$ on $\Sigma ^\prime $ such that $t_c=t_{c_1}^{\varepsilon _1}t_{c_2}^{\varepsilon _2}t_{c_3}^{\varepsilon _3}\cdots t_{c_6}^{\varepsilon _6}\in G$ for some $\varepsilon _1, \varepsilon _2,\varepsilon _3,\dots ,\varepsilon _6\in \{ -1,1\}$. Since each $c_i$ $(i=1,\dots ,6)$ is a boundary component of a subsurface of $\Sigma $ which is diffeomorphic to an orientable surface, $N_{h_i,1}$ for some $h_i\leq h$ or $N_{h_i,2}$ for some $h_i<h$, by the inductive assumption, there exist lifts $h_1,\dots ,h_6\in \mathcal{M}(N_{g,n})$ of $t_{c_1},\dots ,t_{c_6}\in G$ with respect to $\psi $, respectively. Thus $h_1^{\varepsilon _1}h_2^{\varepsilon _2}\cdots h_6^{\varepsilon _6}\in \mathcal{M}(N_{g,n})$ is a lift of $t_c$ with respect to $\psi $.

Suppose $m\geq 2$. Then there exist simple closed curves $c_1$ and $c_2$ on $\Sigma $ such that $c_1\sqcup c_2$ separates $\Sigma $ into $\Sigma ^\prime $, $\Sigma ^{\prime \prime }$ and $\Sigma ^{\prime \prime \prime }$ which are diffeomorphic to $\Sigma _{0,4}$, $\Sigma _{0,m}$ and $N_{h,1}$, respectively, and $c\subset \Sigma ^\prime $.
By using a lantern relation on $\Sigma ^\prime $, there exist simple closed curves $c_3,\dots ,c_6$ on $\Sigma ^\prime $ such that $t_c=t_{c_1}^{\varepsilon _1}t_{c_2}^{\varepsilon _2}t_{c_3}^{\varepsilon _3}\cdots t_{c_6}^{\varepsilon _6}\in G$ for some $\varepsilon _1, \varepsilon _2,\varepsilon _3,\dots ,\varepsilon _6\in \{ -1,1\}$. Since each $c_i$ $(i=1,\dots ,6)$ is a boundary component of a subsurface of $\Sigma $ which is diffeomorphic to an orientable surface or $N_{h,m_i+1}$ for some $m_i<m$, by the inductive assumption, there exist lifts $h_1,\dots ,h_6\in \mathcal{M}(N_{g,n})$ of $t_{c_1},\dots ,t_{c_6}\in G$ with respect to $\psi $, respectively. Thus $h_1^{\varepsilon _1}h_2^{\varepsilon _2}\cdots h_6^{\varepsilon _6}\in \mathcal{M}(N_{g,n})$ is a lift of $t_c$ with respect to $\psi $.


We have completed the proof of Theorem~\ref{main-thm}.

\section{Proof of Proposition~\ref{thm_finite_presentation} and preliminaries for the proof}\label{section_finite_presentation}

In this section, we give a proof of Proposition~\ref{thm_finite_presentation} which is used in the proof of Theorem~\ref{main-thm}. The proof is given in Section~\ref{proof_finite_presentation} and \ref{section_epsilon_r}. For giving the proof, we prepare Section~\ref{subsection_presentation_groupextention}, \ref{subsection_extended_lantern} and \ref{subsection_gen_pi1plus}.

\subsection{Group presentations and short exact sequence}\label{subsection_presentation_groupextention}

Let $G$ be a group and let $H=\bigl< X\mid R\bigr>$, $Q=\bigl< Y\mid S\bigr>$ be presented groups which have the exact sequence 
\[
1\longrightarrow H\stackrel{\iota }{\longrightarrow }G\stackrel{\nu }{\longrightarrow }Q\longrightarrow 1.
\]
We take a lift $\tilde{y}\in G$ of $y\in Q$ with respect to $\nu $ for each $y\in Q$. Then we put $\widetilde{X}:=\{ \iota (x) \mid x\in X\} \subset G$ and $\widetilde{Y}:=\{ \tilde{y} \mid y\in Y\} \subset G$. Denote by $\tilde{r}$ the word in $\widetilde{X}$ which obtained from $r\in R$ by replacing each $x\in X$ by $\iota (x)$ and denote by $\tilde{s}$ the word in $\widetilde{Y}$ which obtained from $s\in S$ by replacing each $y\in Y$ by $\tilde{y}$. We note that $\tilde{r}=1$ in $G$. For each $s\in S$, since $\tilde{s}\in G$ is an element of $\ker \nu $, there exists a word $v_s$ in $\widetilde{X}$ such that $\tilde{s}=v_s$ in $G$. Since $\iota (H)$ is a normal subgroup of $G$, for each $x\in X$ and $y\in Y$, $\tilde{y}\iota (x)\tilde{y}^{-1}$ is an element of $\iota (H)$. Hence there exists a word $w_{x,y}$ in $\widetilde{X}$ such that $\tilde{y}\iota (x)\tilde{y}^{-1}=w_{x,y}$ in $G$. The next lemma follows from an argument of the combinatorial group theory (for instance, see \cite[Proposition~10.2.1, p139]{Johnson1}).
\begin{lem}\label{presentation_exact}
In this situation above, the group $G$ has the following presentation:\\
generators: $\{ \iota (x), \tilde{y} \mid x\in X, y\in Y\}$.\\
relations:
\begin{enumerate}
 \item[(A)] $\tilde{r}=1$\hspace{0.5cm} for $r\in R$,
 \item[(B)] $\tilde{s}=v_s$\hspace{0.5cm} for $s\in S$,
 \item[(C)] $\tilde{y}\iota (x)\tilde{y}^{-1}=w_{x,y}$\hspace{0.5cm} for $x\in X$, $y\in Y$.
\end{enumerate} 
\end{lem}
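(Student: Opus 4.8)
The plan is to recognize this as the standard presentation of a group extension and verify it by a five-lemma argument. Write $P=\langle \widetilde{X}\cup\widetilde{Y}\mid (A),(B),(C)\rangle$ for the group defined by the proposed presentation, and let $\bar{x},\bar{y}$ denote the generators of $P$ corresponding to $\iota(x),\tilde{y}$. Since every relation $(A)$, $(B)$, $(C)$ holds in $G$ by the very construction of the words $v_s$ and $w_{x,y}$, there is a well-defined homomorphism $\phi\colon P\to G$ with $\phi(\bar{x})=\iota(x)$ and $\phi(\bar{y})=\tilde{y}$. First I would check that $\phi$ is surjective: given $g\in G$, express $\nu(g)$ as a word in $Y^{\pm 1}$ and lift it to the corresponding word $w$ in $\widetilde{Y}$; then $gw^{-1}\in\ker\nu=\iota(H)$ is a word in $\widetilde{X}$, so $g$ lies in the subgroup generated by $\widetilde{X}\cup\widetilde{Y}$, and hence in the image of $\phi$.

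The core of the argument is to produce, on the $P$ side, a short exact sequence matching the given one. Let $K\le P$ be the subgroup generated by $\{\bar{x}\mid x\in X\}$. The relations $(C)$ show that conjugating each $\bar{x}$ by $\bar{y}^{\pm 1}$ lands again in $K$; since the $\bar{x}$ and $\bar{y}$ together generate $P$, this makes $K$ normal in $P$. In the quotient $P/K$ every $\bar{x}$ becomes trivial, so relations $(A)$ and $(C)$ are vacuous and relations $(B)$ reduce to $\bar{s}=1$; hence $P/K$ has presentation $\langle Y\mid S\rangle$, that is, $P/K\cong Q$, compatibly with the map induced by $\nu\circ\phi$.

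Next I would identify $K$ with $H$. The relations $(A)$ say precisely that the defining relators of $H=\langle X\mid R\rangle$ hold among the $\bar{x}$, so $x\mapsto\bar{x}$ defines a surjection $\eta\colon H\twoheadrightarrow K$. Composing with $\phi|_K$ gives $\phi|_K\circ\eta\colon H\to\iota(H)$, which sends $x\mapsto\iota(x)$ and is therefore exactly the isomorphism $\iota$ onto its image. Consequently $\eta$ is injective as well, so $\eta\colon H\xrightarrow{\sim}K$ is an isomorphism, and therefore $\phi|_K=\iota\circ\eta^{-1}\colon K\to\iota(H)$ is an isomorphism too.

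Finally, I would assemble the commutative ladder
\[
\begin{array}{ccccccccc}
1 & \longrightarrow & K & \longrightarrow & P & \longrightarrow & P/K & \longrightarrow & 1 \\
  &                 & \big\downarrow &                 & \big\downarrow\phi & & \big\downarrow & & \\
1 & \longrightarrow & \iota(H) & \longrightarrow & G & \longrightarrow & Q & \longrightarrow & 1,
\end{array}
\]
whose rows are the two short exact sequences above and whose outer vertical maps are the isomorphisms $\phi|_K$ and the identification $P/K\cong Q$. The five lemma then forces the middle map $\phi$ to be an isomorphism, which is the assertion. The step I expect to demand the most care is the normality-and-quotient computation: one must note that relations $(C)$, stated only as $\bar{y}\bar{x}\bar{y}^{-1}=w_{x,y}$, also control the conjugates $\bar{y}^{-1}\bar{x}\bar{y}$ (obtained by inverting and re-expressing in $\widetilde{X}$), so that $K$ is genuinely normal and the presentation of $P/K$ is correctly read off. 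Everything else is formal bookkeeping.
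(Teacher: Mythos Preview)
The paper does not prove this lemma; it simply cites a standard reference in combinatorial group theory (Johnson, \emph{Presentations of Groups}, Proposition~10.2.1). Your argument is the standard five-lemma proof and is correct in outline.

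One point deserves tightening. You correctly flag that normality of $K$ is the delicate step, but your proposed fix (``obtained by inverting and re-expressing in $\widetilde{X}$'') does not quite work as stated: inverting the relation $\bar y\bar x\bar y^{-1}=w_{x,y}$ yields information about $\bar y\bar x^{-1}\bar y^{-1}$, not about $\bar y^{-1}\bar x\bar y$, and in general an inclusion $\bar yK\bar y^{-1}\subseteq K$ need not be an equality (think of $BS(1,2)$). The clean resolution is to reorder your argument: establish first that $\eta\colon H\to K$ is an isomorphism and hence that $\phi|_K\colon K\to\iota(H)$ is an isomorphism (exactly as you do in your third paragraph). Then conjugation by $\bar y$ on $K$ corresponds, via $\phi|_K$, to conjugation by $\tilde y$ on $\iota(H)$, which is an automorphism because $\iota(H)\trianglelefteq G$; hence $\bar yK\bar y^{-1}=K$ and $K$ is normal in $P$. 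With that reordering, the computation of $P/K$ and the five-lemma conclusion go through as you wrote.
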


\subsection{Extended lantern relations}\label{subsection_extended_lantern}
Let $S$ be a connected compact surface and let $D$ be a disk on ${\rm int}S$ with the center point $x_0$. Then we have the point pushing map (defined in Section~\ref{rel_twist_crosscappushing}) $j_{x_0}:\pi _1(S,x_0)\rightarrow \mathcal{M}(S,x_0)$. For a two-sided simple loop $\gamma $ on $S$ based at $x_0$, we take an orientation $\theta _{\gamma }\in \{ +_\gamma ,-_\gamma \}$ of $\mathcal{N}_{S}(\gamma )$. Denote by $c_1$ (resp. $c_2$) the boundary component of $\mathcal{N}_{S}(\gamma )$ on the right (resp. left) side of $\gamma $, and by $\theta _i\in \{ +_{c_i}, -_{c_i}\}$ $(i=1,2)$ the orientation of $\mathcal{N}_{S}(c_i)$ which is induced by $\theta _\gamma $. We regard $\gamma $ as an element of $\pi _1(S,x_0)$. Then we have a well-known relation
\[
j_{x_0}(\gamma )=t_{c_1;\theta _1}t_{c_2;\theta _2}^{-1}.
\] 

Let $\mathcal{L}^+=\mathcal{L}^+(S,x_0)$ be the subset of $\pi _1(S,x_0)$ which consists of elements represented by two-sided simple loops. Then we define a map
\[
\Delta =\Delta _{x_0}: \mathcal{L}^+\rightarrow \mathcal{M}(S-{\rm int}D)
\]
as follows. For any two-sided simple loop $\gamma $ on $S$ based at $x_0$, we take $\mathcal{N}_{S}(\gamma )$ whose interior contains $D$. Then we take $c_1$, $c_2$, $\theta _1$ and $\theta _2$ as above.
Define the inclusion $\iota :S-{\rm int}D\rightarrow S$ and $\tilde{c}_i:=\iota ^{-1}(c_i)$ for $i=1,2$. Then we define 
\[
\Delta (\gamma ):=t_{\tilde{c}_1;\theta _{\tilde{c}_1}}t_{\tilde{c}_2;\theta _{\tilde{c}_2}}^{-1}\in \mathcal{M}(S-{\rm int}D),
\]
where $\theta _{\tilde{c}_i}:=(\iota |_{S-{\rm int}D})^{-1}_\ast (\theta _{c_i})\in \{ +_{\tilde{c}_i},-_{\tilde{c}_i}\}$ for $i=1,2$.
 
The next two lemmas are obtained from an argument in Section~3 of \cite{Johnson1}.
\begin{lem}\label{L+}
Let $\alpha $ and $\beta $ be two-sided simple loops on $S$ based at $x_0$ such that $\alpha $ tangentially intersects with $\beta $ at $x_0$ only and the composition $\alpha \beta \in \pi _1(S,x_0)$ is represented by a simple loop. We take the orientation of $\mathcal{N}_{S}(\alpha \cup \beta )$ which is induced by the orientation of $\mathcal{N}_{S-{\rm int}D}(\partial D)$. Then we have 
\[
\Delta (\alpha )\Delta (\beta )=\Delta (\alpha \beta )t_{\partial D}^\varepsilon ,
\]   
where $\varepsilon =1$ if $\alpha $ and $\beta $ are counterclockwise around $x_0$ as on the left-hand side of Figure~\ref{extended_lantern1} and $\varepsilon =-1$ if $\alpha $ and $\beta $ are clockwise around $x_0$ as on the right-hand side of Figure~\ref{extended_lantern1}.
\end{lem}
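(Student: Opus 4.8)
The plan is to realize the claimed identity as a single \emph{lantern relation} (Lemma~\ref{lantern}) on a four-holed sphere, with the exponent $\varepsilon$ accounting for the orientation bookkeeping at $x_0$. First I would pin down the local topology. Since $\alpha$ and $\beta$ are two-sided simple loops meeting tangentially only at $x_0$ and $\alpha\beta$ is again simple, the union $\alpha\cup\beta$ deformation retracts a regular neighborhood $P:=\mathcal{N}_S(\alpha\cup\beta)$ onto a wedge of two circles; hence $\chi(P)=-1$, and the induced orientation (the one coming from $\mathcal{N}_{S-{\rm int}D}(\partial D)$) makes $P$ an oriented pair of pants. Its three boundary components are isotopic in $P$ to $\alpha$, to $\beta$, and to $\alpha\beta$ (the last being the outer boundary produced by smoothing the wedge). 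Removing ${\rm int}D$, where $D\subset{\rm int}P$ is the disk centered at $x_0$, yields a four-holed sphere $\Sigma_{0,4}=P-{\rm int}D$ whose four boundary curves are these three together with $\partial D$.

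Next I would identify the six curves entering $\Delta(\alpha)$, $\Delta(\beta)$ and $\Delta(\alpha\beta)$. For each $\gamma\in\{\alpha,\beta,\alpha\beta\}$ one takes the annulus $\mathcal{N}_S(\gamma)\supset D$ with boundary circles $\tilde c_1^{\gamma},\tilde c_2^{\gamma}$, so that $\Delta(\gamma)=t_{\tilde c_1^{\gamma};\theta}\,t_{\tilde c_2^{\gamma};\theta}^{-1}$. Inspecting $\Sigma_{0,4}$ shows that exactly one of $\tilde c_1^{\gamma},\tilde c_2^{\gamma}$ is boundary-parallel (isotopic to the boundary component of $P$ that is $\approx\gamma$), while the other is one of the three interior lantern curves, namely the one separating $\partial D$ together with the boundary $\approx\gamma$ from the remaining two boundary components. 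Thus the six curves reduce, up to isotopy, to the four boundary curves $\partial D,\alpha,\beta,\alpha\beta$ and the three interior lantern curves of $\Sigma_{0,4}$.

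I would then substitute these identifications into $\Delta(\alpha)\Delta(\beta)\Delta(\alpha\beta)^{-1}$ and apply the lantern relation on $\Sigma_{0,4}$. Since the Dehn twists along the four boundary components are central in $\mathcal{M}(\Sigma_{0,4})$, one may freely collect them; after this rearrangement the three interior-curve twists cancel against the lantern relation, leaving only a power of $t_{\partial D}$. This gives $\Delta(\alpha)\Delta(\beta)=\Delta(\alpha\beta)\,t_{\partial D}^{\varepsilon}$ with $\varepsilon=\pm1$. As a conceptual check on the form of the answer, capping $\partial D$ with a once-marked disk carries $\Delta$ to the point-pushing map $j_{x_0}$, which is a genuine homomorphism by the Birman exact sequence; as the kernel of the capping homomorphism is generated by $t_{\partial D}$, the element $\Delta(\alpha)\Delta(\beta)\Delta(\alpha\beta)^{-1}$ must in any case be \emph{some} power of $t_{\partial D}$, and the lantern computation pins the exponent to $\pm1$.

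The hard part will be the determination of the sign of $\varepsilon$ and the proof that it is governed by whether $\alpha,\beta$ run counterclockwise or clockwise about $x_0$. This is pure orientation bookkeeping, but it must be done carefully: the orientations $\theta_i$ of the curves $\tilde c_i^{\gamma}$ are all induced from the fixed orientation of $\mathcal{N}_{S-{\rm int}D}(\partial D)$, and the exponents $\varepsilon_c$ in Lemma~\ref{lantern} are then fixed by comparing each $\theta_{\tilde c_i}$ with the orientation of $\Sigma_{0,4}$. Reversing the common tangent direction of $\alpha$ and $\beta$ at $x_0$ (the passage from the left-hand to the right-hand configuration in Figure~\ref{extended_lantern1}) interchanges the two sides of each annulus and hence flips the relevant $\varepsilon_c$ simultaneously, so it switches $\varepsilon$ between $+1$ and $-1$. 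I would record the two cases by a direct reading of the local picture at $x_0$, thereby fixing the sign asserted in the statement.
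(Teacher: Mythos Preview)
Your approach is correct and is exactly what the paper has in mind: immediately after stating Lemma~\ref{L+} the authors write ``The relations in Lemma~\ref{L+} are original lantern relations'' and point to Johnson's argument rather than giving a proof. Your identification of $P=\mathcal{N}_S(\alpha\cup\beta)$ as a pair of pants, of $P-{\rm int}D$ as $\Sigma_{0,4}$, and of the six curves appearing in $\Delta(\alpha),\Delta(\beta),\Delta(\alpha\beta)$ as the four boundary curves together with the three interior lantern curves is precisely the content of that remark, spelled out in detail.

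One small point of phrasing to tighten: the three interior twists do not ``cancel against'' the lantern relation; rather, the lantern relation \emph{replaces} their product (in the correct cyclic order) by the product of the four boundary twists, after which the twists along $\partial_\alpha,\partial_\beta,\partial_{\alpha\beta}$ cancel with those already present in $\Delta(\alpha)\Delta(\beta)\Delta(\alpha\beta)^{-1}$, leaving $t_{\partial D}^{\pm1}$. When you carry out the sign determination, be careful that for $\gamma=\alpha\beta$ the ``inside'' and ``outside'' are reversed relative to $\alpha$ and $\beta$ (the boundary component $\partial_{\alpha\beta}$ lies on the \emph{left} of $\alpha\beta$ in the counterclockwise picture), so the boundary-parallel curve appears as $\tilde c_2$ rather than $\tilde c_1$; this is what makes all three interior twists enter with the same sign and produces a genuine lantern relation rather than a mixed expression.
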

The relations in Lemma~\ref{L+} are original lantern relations. We call the relations in Lemma~\ref{L+} Relations~(L+) when $\varepsilon =1$ and Relations~(L-) when $\varepsilon =-1$ (see Figure~\ref{extended_lantern1}).

\begin{figure}[h]
\includegraphics[scale=0.8]{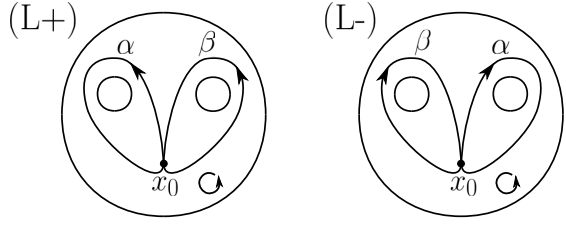}
\caption{Oriented subsurfaces $\mathcal{N}_{S}(\alpha \cup \beta )$.}\label{extended_lantern1}
\end{figure}

\begin{lem}\label{L0}
Let $\alpha $ and $\beta $ be two-sided simple loops on $S$ based at $x_0$ such that $\alpha $ transversely intersects with $\beta $ at $x_0$ only. Then we have 
\[
\Delta (\alpha )\Delta (\beta )=\Delta (\alpha \beta ).
\]   
\end{lem}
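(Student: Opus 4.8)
The plan is to realize $\Delta$ as the simple-loop part of a genuine \emph{disk-pushing} homomorphism and thereby reduce the identity to a single local winding-number computation at $x_0$. First I would introduce the capping homomorphism $\Phi\colon \mathcal{M}(S-\mathrm{int}D)\to \mathcal{M}(S,x_0)$ obtained by regluing $D$ with its centre marked; it sends $t_{\tilde c_i}\mapsto t_{c_i}$, and by the standard capping sequence its kernel is the infinite cyclic group generated by $t_{\partial D}$. Comparing the defining formula $\Delta(\gamma)=t_{\tilde c_1;\theta_{\tilde c_1}}t_{\tilde c_2;\theta_{\tilde c_2}}^{-1}$ with the well-known identity $j_{x_0}(\gamma)=t_{c_1;\theta_1}t_{c_2;\theta_2}^{-1}$ recalled above shows that $\Phi\circ\Delta=j_{x_0}$ on all of $\mathcal{L}^+$.

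Since $j_{x_0}$ is a homomorphism and the concatenation convention matches the composition convention for $\mathcal{M}(S,x_0)$, we have $j_{x_0}(\alpha)j_{x_0}(\beta)=j_{x_0}(\alpha\beta)$. Applying $\Phi$ to $\Delta(\alpha)\Delta(\beta)\Delta(\alpha\beta)^{-1}$ therefore yields the identity in $\mathcal{M}(S,x_0)$, so that element lies in $\ker\Phi$; hence
\[
\Delta(\alpha)\Delta(\beta)=\Delta(\alpha\beta)\,t_{\partial D}^{\,k}
\]
for a single integer $k$. This is precisely the reduction that in the tangential case produces Lemma~\ref{L+} with $k=\varepsilon=\pm1$, so the entire content of the present lemma is the assertion that $k=0$ in the transverse case.

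To pin down $k$ I would pass to the unit tangent bundle. Pushing $D$ around a loop, together with rotations of $D$, assembles into a homomorphism from the central $\mathbb{Z}$-extension $\pi_1(UT(S-\mathrm{int}D))$ of $\pi_1(S,x_0)$ into $\mathcal{M}(S-\mathrm{int}D)$, under which the central fibre generator maps to $t_{\partial D}$. For a two-sided simple loop $\gamma$ the element $\Delta(\gamma)$ is exactly the image of the lift $\hat\gamma$ of $\gamma$ determined by the tangent framing along $\gamma$. Consequently $k$ measures the difference between the lift $\hat\alpha\,\hat\beta$ and the tangent-framed lift $\widehat{\alpha\beta}$ of the one element $\alpha\beta\in\pi_1(S,x_0)$, i.e. the relative rotation number of the tangent framing accumulated when the concatenation is smoothed at $x_0$. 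Because turning is additive away from $x_0$, this $k$ depends only on the cyclic arrangement of the four strands of $\alpha$ and $\beta$ in a small disc about $x_0$.

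The hard part is exactly this local count, which is the one genuinely delicate step. I would fix a trivialization on a disc $D'\supset D$ and compare the total turning of the tangent direction along $\alpha$, along $\beta$, and along the smoothed $\alpha\beta$. In the transverse configuration the two strands of each loop meet at $x_0$ as an honest crossing, so the resolution joining them into an embedded arc introduces no net turn and the framings agree, giving $k=0$; by contrast, in the tangential configuration of Lemma~\ref{L+} the loops lie on the same side and the smoothing turns through a full revolution, producing $t_{\partial D}^{\pm1}$. Carrying out this rotation-number bookkeeping carefully — tracking signs against the chosen orientation of $\mathcal{N}_{S}(\gamma)$ and verifying that the transverse smoothing really is turn-free — is where the real work lies; everything preceding it is formal once $\Phi$ and the disk-pushing homomorphism are in place.
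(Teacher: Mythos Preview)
Your approach is correct in outline and is essentially the framing/winding-number argument of Johnson that the paper cites (without detail) just before stating Lemmas~\ref{L+} and~\ref{L0}. However, the paper also supplies its own explicit proof, immediately afterwards as Lemma~\ref{L0_braid}, and that proof takes a quite different route from yours.

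The paper's argument is a direct Dehn-twist computation. Writing $\Delta(\alpha)=t_{\tilde a_1}t_{\tilde a_2}^{-1}$, $\Delta(\beta)=t_{\tilde b_1}t_{\tilde b_2}^{-1}$, $\Delta(\alpha\beta)=t_{\tilde c_1}t_{\tilde c_2}^{-1}$, one simply observes from the picture that $t_{\tilde a_2}^{-1}(\tilde b_i)=\tilde c_i$ for $i=1,2$; a single application of the braid relation $t_{f(c)}=ft_cf^{-1}$ then gives $t_{\tilde c_1}t_{\tilde c_2}^{-1}=t_{\tilde a_2}^{-1}(t_{\tilde b_1}t_{\tilde b_2}^{-1})t_{\tilde a_2}$, and a short reshuffling (using one more braid relation, since $t_{\tilde b_1}t_{\tilde b_2}^{-1}(\tilde a_2)=\tilde a_1$) yields $\Delta(\alpha)\Delta(\beta)=\Delta(\alpha\beta)$. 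No capping sequence, no unit tangent bundle, no rotation-number bookkeeping.

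What each approach buys: your route is conceptual and explains \emph{why} the transverse case differs from the tangential one of Lemma~\ref{L+} (no net turning at the smoothing), and it treats (L+), (L$-$) and (L0) in a uniform framework. The paper's route is more elementary and, crucially for its purposes, it exhibits (L0) as a \emph{formal consequence of the braid relations~(I) alone}. This is exactly what the paper needs later, when it must show that the relations (D0)--(D4g) of the finite presentation are derivable from Relations~(0)--(V) of the infinite presentation. Your argument proves the identity holds in $\mathcal{M}(S-\mathrm{int}D)$, but does not by itself show that it is a consequence of Relations~(I); for the paper's application that extra content matters.

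One small slip: the disk-pushing homomorphism you invoke is defined on $\pi_1(UTS)$ (the unit tangent bundle of $S$, restricted to the orientation double cover in the non-orientable case), not on $\pi_1(UT(S-\mathrm{int}D))$.
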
 
We call the relations in Lemma~\ref{L0} Relations~(L0). We have the following lemma. 
\begin{lem}\label{L0_braid}
Relations~(L0) are obtained from the braid relations~(i).
\end{lem}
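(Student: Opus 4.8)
The plan is to unwind the definition of $\Delta$ and reduce the asserted relation to a word identity among Dehn twists along curves that pairwise intersect in at most one point, so that braid relation~(i) (Lemma~\ref{braid1}) applies directly. Recall that for a two-sided simple loop $\gamma$ based at $x_0$ the point-pushing map satisfies $j_{x_0}(\gamma)=t_{c_1;\theta _1}t_{c_2;\theta _2}^{-1}$, and hence $\Delta(\gamma)=t_{\tilde c_1;\theta _{\tilde c_1}}t_{\tilde c_2;\theta _{\tilde c_2}}^{-1}$, where $c_1,c_2$ are the right/left boundary curves of $\mathcal{N}_S(\gamma)\supset D$. Writing $a_1,a_2$, $b_1,b_2$ and $e_1,e_2$ for the right/left boundary curves of $\mathcal{N}_S(\alpha)$, $\mathcal{N}_S(\beta)$ and $\mathcal{N}_S(\alpha\beta)$, the relation~(L0) becomes
\[
(t_{a_1}t_{a_2}^{-1})(t_{b_1}t_{b_2}^{-1})=t_{e_1}t_{e_2}^{-1}
\]
(orientations suppressed for now), and this is exactly what must be derived from braid relations alone.

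First I would localize the whole computation to the subsurface $R:=\mathcal{N}_S(\alpha\cup\beta)$. Since $\alpha$ and $\beta$ are two-sided and meet transversely in the single point $x_0$, the four ends of $\alpha\cup\beta$ at $x_0$ alternate, so $R$ is an orientable genus-one subsurface and $R-\operatorname{int}D$ contains all six curves $a_1,a_2,b_1,b_2,e_1,e_2$, any two of which meet in $0$ or $1$ points. I note that one could invoke Remark~\ref{orientable} here because $R$ is orientable, but that would permit Relations~(II) and (III); the content of the present lemma is to carry out the reduction using only~(I), and for this I need a genuinely geometric input. The key step is to read off, from an explicit picture of the pushed-off boundary curves in $R-\operatorname{int}D$, that each $e_i$ is the image of one of $a_j,b_k$ under a single Dehn twist along another of these curves (for instance $e_1=t_{a_1}^{-1}(b_1)$ and $e_2=t_{b_2}(a_2)$, up to orientation and the correct choice of which twist acts). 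Given such identifications, braid relation~(i) rewrites $t_{e_1}$ and $t_{e_2}$ as the corresponding conjugates (e.g. $t_{e_1}=t_{a_1}^{-1}t_{b_1}t_{a_1}$), so the right-hand side of the display becomes a word in $t_{a_1},t_{a_2},t_{b_1},t_{b_2}$.

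I would then transform this word into the left-hand side using only further instances of Lemma~\ref{braid1}: since $a_1,a_2$ are disjoint the twists $t_{a_1},t_{a_2}$ commute, likewise $t_{b_1},t_{b_2}$, while each $a_i$ meets each $b_j$ at most once, so every rearrangement is governed by a braid relation among these twists; throughout, the induced orientations $\theta _{\tilde c_i}$ and the signs are tracked so that the two-sidedness conventions agree on both sides. The main obstacle is precisely this bookkeeping. The naive symmetric guesses for the $e_i$ do not make the two sides collapse, which shows that the correct derivation must mix the twists (one $e_i$ arising from an $a$-twist and the other from a $b$-twist) with carefully matched orientations; pinning down which $e_i$ is the twist-image of which $a_j$ or $b_k$, and along which curve, is the delicate part, after which the algebra is forced. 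Hence the heart of the argument is the configuration of the six boundary curves in $R-\operatorname{int}D$ rather than the subsequent manipulation, and the care needed is to ensure no $2$-chain or lantern relation is ever invoked.
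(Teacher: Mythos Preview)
Your overall strategy---localize to $R=\mathcal N_S(\alpha\cup\beta)$, express the boundary curves of $\mathcal N_S(\alpha\beta)$ as twist-images of the $a_i,b_j$, and finish with braid relations---is exactly the paper's approach. The gap is in the step you yourself flag as delicate: your guessed identifications (one $e_i$ coming from an $a$-twist, the other from a $b$-twist) are not the ones that make the algebra close, and you stop short of producing any that do.

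The paper's key observation is simpler and more uniform than the ``mixed'' picture you anticipate: a \emph{single} twist does the job for both curves simultaneously. Namely (in the case where the algebraic intersection number of $\alpha$ and $\beta$ is $+1$),
\[
t_{\tilde a_2}^{-1}(\tilde b_i)=\tilde c_i\qquad(i=1,2),
\]
so one application of braid relation~(i) gives $\Delta(\alpha\beta)=t_{\tilde c_1}t_{\tilde c_2}^{-1}=t_{\tilde a_2}^{-1}\bigl(t_{\tilde b_1}t_{\tilde b_2}^{-1}\bigr)t_{\tilde a_2}=t_{\tilde a_2}^{-1}\Delta(\beta)t_{\tilde a_2}$. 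After inserting $\Delta(\beta)^{-1}\Delta(\beta)$, a second braid relation---using the geometric fact $\Delta(\beta)(\tilde a_2)=\tilde a_1$ (i.e.\ point-pushing along $\beta$ carries $\tilde a_2$ to $\tilde a_1$)---turns $\Delta(\beta)t_{\tilde a_2}\Delta(\beta)^{-1}$ into $t_{\tilde a_1}$, and the expression collapses to $t_{\tilde a_1}t_{\tilde a_2}^{-1}\cdot t_{\tilde b_1}t_{\tilde b_2}^{-1}=\Delta(\alpha)\Delta(\beta)$. No commutation of disjoint twists, no $2$-chain or lantern relation, is needed. So your plan is right, but the missing ingredient is precisely this pair of curve identities; once you have them the computation is two lines, not the open-ended bookkeeping you describe.
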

\begin{proof}
Let $\alpha $ and $\beta $ be two-sided simple loops on $S$ based at $x_0$ such that $\alpha $ transversely intersects with $\beta $ at $x_0$ only. We take a representative of $\alpha \beta \in \pi _1(S,x_0)$ by a simple loop $\gamma $ and also take the orientations of $\mathcal{N}_{S}(\alpha \cup \beta )\subset S$ and $\mathcal{N}_{S}(\alpha \cup \beta )-{\rm int}D\subset S-{\rm int}D$ which is induced by the orientation of $\mathcal{N}_{S-{\rm int}D}(\partial D)$. Define boundary components $a_1\sqcup a_2=\partial \mathcal{N}_{S}(\alpha )$, $b_1\sqcup b_2=\partial \mathcal{N}_{S}(\beta )$ and $c_1\sqcup c_2=\partial \mathcal{N}_{S}(\gamma )$ such that $a_1$, $b_1$ and $c_1$ are on the right-hand side of $\alpha $, $\beta $ and $\gamma $, respectively. We consider the case where the algebraic intersection number, with respect to the orientation of $\mathcal{N}_{S}(\alpha \cup \beta )$, of $\alpha $ and $\beta $ is $1$ and orientations of $\mathcal{N}_{S-{\rm int}D}(\tilde{a}_i)$, $\mathcal{N}_{S-{\rm int}D}(\tilde{b}_i)$ and $\mathcal{N}_{S-{\rm int}D}(\tilde{c}_i)$ are compatible with the orientation of $\mathcal{N}_{S}(\alpha \cup \beta )$. Figure~\ref{l0_braid1} expresses this situation. Then we have $\Delta (\alpha )=t_{\tilde{a}_1}t_{\tilde{a}_2}^{-1}$, $\Delta (\beta )=t_{\tilde{b}_1}t_{\tilde{b}_2}^{-1}$ and $\Delta (\gamma )=t_{\tilde{c}_1}t_{\tilde{c}_2}^{-1}$. For the other cases, we can prove this lemma by an argument similar to the following argument.

Since $t_{\tilde{a}_2}^{-1}(\tilde{b}_i)=\tilde{c}_i$ for $i=1,2$, we have 
\begin{eqnarray*}
\Delta (\alpha \beta )
&=& t_{\tilde{c}_1}t_{\tilde{c}_2}^{-1}\\
&\stackrel{(\text{I})}{=}& t_{\tilde{a}_2}^{-1}(t_{\tilde{b}_1}t_{\tilde{b}_2}^{-1})t_{\tilde{a}_2}\\
&=& t_{\tilde{a}_2}^{-1}t_{\tilde{b}_1}t_{\tilde{b}_2}^{-1}t_{\tilde{a}_2}\cdot (t_{\tilde{b}_1}t_{\tilde{b}_2}^{-1})^{-1}t_{\tilde{b}_1}t_{\tilde{b}_2}^{-1}\\
&=& t_{\tilde{a}_2}^{-1}\underline{(t_{\tilde{b}_1}t_{\tilde{b}_2}^{-1})t_{\tilde{a}_2}(t_{\tilde{b}_1}t_{\tilde{b}_2}^{-1})^{-1}}\cdot t_{\tilde{b}_1}t_{\tilde{b}_2}^{-1}\\
&\stackrel{(\text{I})}{=}& t_{\tilde{a}_2}^{-1}t_{\tilde{a}_1}\cdot t_{\tilde{b}_1}t_{\tilde{b}_2}^{-1}\\
&=& \Delta (\alpha )\Delta (\beta ).
\end{eqnarray*}
We have the lemma.
\end{proof}  
We remark that Relations~(L+), (L-) and (L0) are obtained from Relations~(I) and (I\hspace{-0.04cm}I\hspace{-0.04cm}I).
 
\begin{figure}[h]
\includegraphics[scale=0.9]{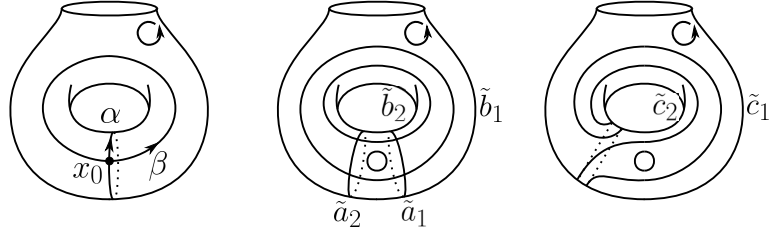}
\caption{Two-sided simple loops $\alpha $ and $\beta $ on $\mathcal{N}_{S}(\alpha \cup \beta )$ such that their algebraic intersection number is $1$ (on the left side), and simple closed curves $\tilde{a}_i$, $\tilde{b}_i$ and $\tilde{c}_i$ on $\mathcal{N}_{S}(\alpha \cup \beta )-{\rm int}D$ for $i=1,2$ (on the center and the right side). }\label{l0_braid1}
\end{figure}

\subsection{Generators for the subgroup of the fundamental group generated by two-sided loops}\label{subsection_gen_pi1plus}
Recall that we take a model of $N_{g,n}$ as in Figure~\ref{scc_nonorisurf1} for $n\geq 1$. Assume $n\geq 2$. We regard $N_{g,n-1}$ as the surface obtained by regluing $D_{g+n-1}$ and $N_{g,n}$. Put the center point $x_0$ of $D_{g+n-1}$. Let $\pi _1(N_{g,n-1})^+$ be the subgroup of $\pi _1(N_{g,n-1},x_0)$ which consists of elements that is represented by loops such that the pushing maps along their loops preserve a local orientation of $x_0$. Let $x_1,\dots ,x_g, y_1,\dots ,y_{n-2}$ be loops on $N_{g,n-1}$ based at $x_0$ as in Figure~\ref{gen_pi1_nonori_boundary1} and we regard $N_{g,n-1}$ as the surface in Figure~\ref{gen_pi1_nonori_boundary1} for some conveniences. Note that $x_i^2$ for $1\leq i\leq g$, $x_{i+1}x_i$ for $1\leq i\leq g-1$ and $x_1^{-1}y_ix_1$ for $1\leq i\leq n-2$ are elements of $\pi _1(N_{g,n-1})^+$. $x_i^2$, $x_{i+1}x_i$ and $x_1^{-1}y_ix_1$ are represented by loops as in Figure~\ref{gen_pi1plus_nonori_boundary1}. Since $\pi _1(N_{g,n-1},x_0)$ is the free group which is freely generated by $x_1,\dots ,x_g, y_1,\dots ,y_{n-2}$, $\pi _1(N_{g,n-1})^+$ is also isomorphic to a free group. We have the following lemma.
\begin{lem}\label{gen_pi+}
For $g\geq 1$ and $n\geq 2$, $\pi _1(N_{g,n-1})^+$ is the free group which is freely generated by $x_1^2, \dots, x_g^2, x_2x_1, \dots , x_gx_{g-1}, y_1, \dots ,y_{n-2}, x_1^{-1}y_1x_1, \dots ,x_1^{-1}y_{n-2}x_1$.
\end{lem}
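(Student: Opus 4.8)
The plan is to identify $\pi _1(N_{g,n-1})^+$ with the kernel of the orientation homomorphism and then produce an explicit free basis by the Reidemeister--Schreier method, converting it afterwards into the asserted basis. First I would set $F:=\pi _1(N_{g,n-1},x_0)$, which is free on $x_1,\dots ,x_g,y_1,\dots ,y_{n-2}$, and introduce the orientation homomorphism $w\colon F\rightarrow \mathbb{Z}/2\mathbb{Z}$ recording whether the pushing map along a loop preserves or reverses a local orientation at $x_0$. Since each $x_i$ is one-sided and each $y_j$ is two-sided, $w(x_i)=1$ and $w(y_j)=0$, and by the very definition of $\pi _1(N_{g,n-1})^+$ we have $\pi _1(N_{g,n-1})^+=\ker w$, an index-$2$ subgroup of $F$. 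By the Nielsen--Schreier index formula, $\ker w$ is free of rank $1+2\bigl((g+n-2)-1\bigr)=2g+2n-5$, which is exactly the number of the asserted generators.

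Next I would compute a free basis directly. The set $T=\{1,x_1\}$ is a Schreier transversal for $\ker w$ in $F$: it is prefix-closed, and since the nontrivial coset consists of the orientation-reversing classes, $x_1\in T$ represents it. Applying Reidemeister--Schreier to $T$ and the generators $x_1,\dots ,x_g,y_1,\dots ,y_{n-2}$ (the pair $t=1$, $a=x_1$ giving the trivial word) yields the free basis
\[
B=\{x_1^2\}\cup \{x_ix_1^{-1},\,x_1x_i\mid 2\le i\le g\}\cup \{y_j,\,x_1y_jx_1^{-1}\mid 1\le j\le n-2\},
\]
which consists of $2g+2n-5$ elements, as expected.

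Finally I would verify that $B$ and the asserted set
\[
B'=\{x_i^2\mid 1\le i\le g\}\cup \{x_{i+1}x_i\mid 1\le i\le g-1\}\cup \{y_j,\,x_1^{-1}y_jx_1\mid 1\le j\le n-2\}
\]
generate the same subgroup of $F$. For $\langle B'\rangle \subseteq \langle B\rangle $ one uses short word identities such as $x_i^2=(x_ix_1^{-1})(x_1x_i)$ for $i\ge 2$, $x_{i+1}x_i=(x_{i+1}x_1^{-1})(x_1x_i)$ (with the $i=1$ case $x_2x_1=(x_2x_1^{-1})(x_1^2)$), and $x_1^{-1}y_jx_1=(x_1^2)^{-1}(x_1y_jx_1^{-1})(x_1^2)$. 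For the reverse inclusion $\langle B\rangle \subseteq \langle B'\rangle $ one expresses $x_ix_1^{-1}$ inductively via $x_ix_1^{-1}=(x_ix_{i-1}^{-1})(x_{i-1}x_1^{-1})$ with $x_ix_{i-1}^{-1}=(x_ix_{i-1})(x_{i-1}^2)^{-1}$, then obtains $x_1x_i=(x_ix_1^{-1})^{-1}(x_i^2)$ and $x_1y_jx_1^{-1}=(x_1^2)(x_1^{-1}y_jx_1)(x_1^2)^{-1}$. Since $|B'|=2g+2n-5$ equals the rank of the free group $\ker w$ and $B'$ generates it, $B'$ is automatically a free basis, because free groups are Hopfian and hence cannot be generated by fewer elements than their rank; this gives the lemma.

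The conceptual content is entirely in the first two steps, namely recognizing $\pi _1(N_{g,n-1})^+$ as $\ker w$ and choosing the transversal adapted to the single crosscap $x_1$. The step I expect to require the most care is the last one: the natural Reidemeister--Schreier basis $B$ is organized around $x_1$, whereas the asserted basis $B'$ is the more symmetric collection $\{x_i^2,x_{i+1}x_i,y_j,x_1^{-1}y_jx_1\}$, so the main (though routine) work is the explicit change of generators between $B$ and $B'$, after which the rank count supplies freeness for free.
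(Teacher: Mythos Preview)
Your proposal is correct and follows essentially the same route as the paper: both identify $\pi_1(N_{g,n-1})^+$ as the index-$2$ kernel of the orientation character, apply Reidemeister--Schreier with the transversal $\{1,x_1\}$, and then change basis. The only cosmetic differences are that the paper uses the convention $\overline{xu}^{-1}xu$ (yielding $x_ix_1$, $x_1^{-1}x_j$, $y_k$, $x_1^{-1}y_kx_1$) and performs the change of basis via explicit Tietze transformations, whereas you use the $ta(\overline{ta})^{-1}$ convention and replace the Tietze step by the observation that a generating set of the correct cardinality in a free group is automatically a free basis by the Hopfian property; both arguments are equally valid and of the same length.
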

\begin{proof}
We use the Reidemeister-Schreier method (for instance see \cite{Johnson2}) for $\pi _1(N_{g,n-1})^+\subset \pi _1(N_{g,n-1},x_0)$ to obtain the generators for $\pi _1(N_{g,n-1})^+$. Since $\pi _1(N_{g,n-1})^+$ is an index $2$ subgroup of $\pi _1(N_{g,n-1},x_0)$ and the non-trivial element of the quotient group $\pi _1(N_{g,n-1},x_0)/\pi _1(N_{g,n-1})^+$ is represented by $x_1$, the set $U:=\{ 1, x_1\} \subset \pi _1(N_{g,n-1})^+$ is a Schreier transversal for $\pi _1(N_{g,n-1})^+$ in $\pi _1(N_{g,n-1},x_0)$. Set $X:=\{ x_1,\dots ,x_g, y_1,\dots ,y_{n-2}\}$. For any word $w$ in $X$, denote by $\overline{w}$ the element of $U$ whose equivalence class in $\pi _1(N_{g,n-1},x_0)/\pi _1(N_{g,n-1})^+$ is the same as that of $w$. Then $\pi _1(N_{g,n-1})^+$ is the free group which is freely generated by 
\begin{eqnarray*}
B&=&\{ \overline{xu}^{-1}xu \mid x\in X, u\in U, xu\not\in U \} \\
&=&\{ x_ix_1,\ x_1^{-1}x_j,\ y_k,\ x_1^{-1}y_kx_1\mid i=1,\dots ,g,\ j=2,\dots ,g,\ k=1,\dots ,n-2 \}.
\end{eqnarray*}
Put $z_1:=x_1^2$, $z_i:=(x_ix_1)(x_1^{-1}x_i)$ for $i=2,\dots ,g$, $w_1:=x_2x_1$ and $w_i:=(x_{i+1}x_1)(x_1^{-1}x_i)$ for $i=2,\dots ,g-1$ as words in $B$. By using the Tietze transformations (for instance see \cite[Proposition~4.4.5, p46]{Johnson2}) and relations $(x_1^{-1}x_i)=(x_ix_1)^{-1}z_i$ and $(x_{i+1}x_1)=w_i(x_1^{-1}x_i)^{-1}$ for $i\geq 2$, we have isomorphisms
\begin{eqnarray*}
&&\bigl< B \mid \bigr> \\
&\cong &\bigl< B\cup \{ z_i, w_j \mid i=1,\dots g,\ j=1,\dots ,g-1\} \mid \\
& &z_1=x_1^2,\ z_i=(x_ix_1)(x_1^{-1}x_i),\ w_1=x_2x_1,\ w_j=(x_{j+1}x_1)(x_1^{-1}x_j)\bigr> \\
&\cong &\bigl< B\cup \{ z_i, w_j \mid i=1,\dots g,\ j=1,\dots ,g-1\} \mid \\
& &z_1=x_1^2,\ w_1=x_2x_1,\ x_ix_1=w_{i-1}z_{i-1}^{-1}\cdots w_2z_2^{-1}w_1,\\
&& x_1^{-1}x_{j+1}=w_1^{-1}z_2w_2^{-1}\cdots z_jw_j^{-1}z_{j+1}\bigr> \\
&\cong &\bigl< \{  z_i,\ w_j,\ y_k,\ x_1^{-1}y_kx_1 \mid i=1,\dots g,\ j=1,\dots ,g-1,\ k=1,\dots ,n-2\} \mid \bigr> .
\end{eqnarray*}
Note that $z_i=x_i^2$ and $w_i=x_{i+1}x_i$ as elements of $\pi _1(N_{g,n-1})^+$. We get this lemma.
\end{proof}

\begin{figure}[h]
\includegraphics[scale=0.7]{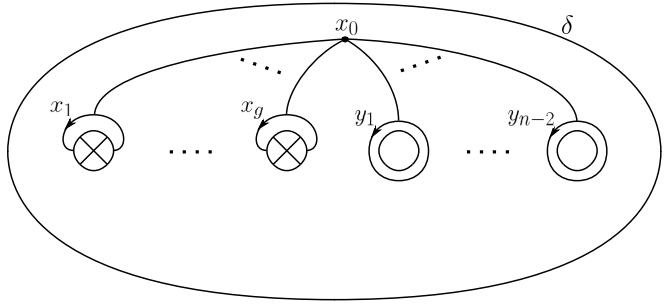}
\caption{Loops $x_1,\dots ,x_g, y_1,\dots ,y_{n-2}$ on $N_{g,n-1}$ based at $x_0$.}\label{gen_pi1_nonori_boundary1}
\end{figure}
\begin{figure}[h]
\includegraphics[scale=0.7]{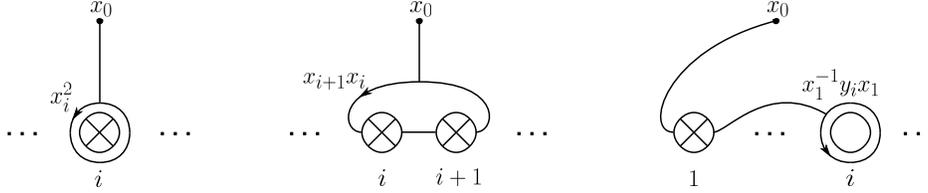}
\caption{Loops $x_i^2,\ x_{i+1}x_i,\  x_1^{-1}y_ix_1$ on $N_{g,n-1}$ based at $x_0$.}\label{gen_pi1plus_nonori_boundary1}
\end{figure}

\subsection{Proof of Proposition~\ref{thm_finite_presentation}}\label{proof_finite_presentation}

Let $\mathcal{M}^+(N_{g,n-1},x_0)$ be the subgroup of $\mathcal{M}(N_{g,n-1},x_0)$ whose elements preserve a local orientation of $x_0$. For $n\geq 2$, the forgetful homomorphism $\mathcal{M}(N_{g,n-1},x_0)\rightarrow \mathcal{M}(N_{g,n-1})$ induces the following exact sequence
\begin{eqnarray}\label{exact1}
\hspace{1.0cm} 1\longrightarrow \pi _1(N_{g,n-1})^+ \stackrel{j_{x_0}|_{\pi _1(N_{g,n-1})^+}}{\longrightarrow }\mathcal{M}^+(N_{g,n-1},x_0)\longrightarrow \mathcal{M}(N_{g,n-1})\longrightarrow 1.
\end{eqnarray}
Since $\pi _1(N_{g,n-1},x_0)$ is isomorphic to a free group, the center of $\pi _1(N_{g,n-1},x_0)$ is trivial. Thus the homomorphism $j_{x_0}$ is injective by \cite[Corollary~1.2]{Birman}.

The natural inclusion $\iota :N_{g,n}\hookrightarrow N_{g,n-1}$ induces the surjective homomorphism $\iota _\ast :\mathcal{M}(N_{g,n})\rightarrow \mathcal{M}^+(N_{g,n-1},x_0)$. By Theorem~3.6 in \cite{Stukow2}, we have the exact sequence 
\begin{eqnarray}\label{exact2}
1\longrightarrow \mathbb Z[d_{n-1}] \longrightarrow \mathcal{M}(N_{g,n})\stackrel{\iota _\ast }{\longrightarrow }\mathcal{M}^+(N_{g,n-1},x_0)\longrightarrow 1.
\end{eqnarray}

We remark that for a relation $v_1^{\varepsilon _1}\cdots v_k^{\varepsilon _k}=w_1^{\delta _1}\cdots w_l^{\delta _l}$ of a group $G$, we call $v_1^{\varepsilon _1}\cdots v_k^{\varepsilon _k}(w_1^{\delta _1}\cdots w_l^{\delta _l})^{-1}$ the relator of $G$ (obtained from the relation).

\begin{proof}[Proof of Proposition~\ref{thm_finite_presentation}]
We proceed by induction on $n\geq 1$. The finite presentation for $\mathcal{M}(N_{g,1})$ is given by Theorem~\ref{thm_Stukow}. 

Assume $n\geq 2$. For some conveniences, we define $\iota _\ast (a_i)=a_i\in \mathcal{M}^+(N_{g,n-1},x_0)$, $\iota _\ast (y)=y\in \mathcal{M}^+(N_{g,n-1},x_0)$, $\iota _\ast (b)=b\in \mathcal{M}^+(N_{g,n-1},x_0)$, etc., and we take lifts $a_i$, $y$, $b$, $\ldots \in \mathcal{M}^+(N_{g,n-1},x_0)$ of $a_i$, $y$, $b$, $\ldots \in \mathcal{M}(N_{g,n-1})$, respectively. By Lemma~\ref{gen_pi+}, the inductive assumption and applying Lemma~\ref{presentation_exact} to the exact sequence~\ref{exact1}, we obtain the following presentation for $\mathcal{M}^+(N_{g,n-1},x_0)$:\\
generators: 
\begin{enumerate}
\item[(1)] $a_i$ $(1\leq i\leq g-1)$, $y$, $b$, $d_i$ $(1\leq i\leq n-2)$, $a_{i;j}$ $(1\leq i\leq g-1, 1\leq j\leq n-2)$, $r_{i;j}$ $(1\leq i\leq g,1\leq j\leq n-2)$, $s_{i,j}$  $(1\leq i<j\leq n-2)$, $\bar{s}_{i,j}$ $(1\leq i<j\leq n-2)$,\\
\item[(2)] $j_{x_0}(x_i^2)=r_{i;n-1}$ $(1\leq i\leq g)$, $j_{x_0}(x_{i+1}x_i)=a_{i;n-1}a_i^{-1}$ $(1\leq i\leq g-1)$, $j_{x_0}(y_i)=s_{i,n-1}d_i^{-1}$ $(1\leq i\leq n-2)$ and $j_{x_0}(x_1^{-1}y_ix_1)=\bar{s}_{i,n-1}d_i^{-1}$ $(1\leq i\leq n-2)$.\\
\end{enumerate}
Denote by $X_1$ the set of generators in (1) and by $X_2$ the set of generators in (2).\\
relations: 
\begin{enumerate}
\item[(1)] For any relator $s$ of the presentation for $\mathcal{M}(N_{g,n-1})$ and the lift $\tilde{s}$ of $s$ to $\mathcal{M}^+(N_{g,n-1},x_0)$, 
\[
\tilde{s}=v_s
\] 
for some product $v_s$ of elements in $X_2$.
\item[(2)] For $x\in X_2$ and $f \in X_1$, 
\[
fxf^{-1}=w_{x,f}
\]
for some product $w_{x,f}$ of elements in $X_2$.
\end{enumerate}
Note that the generators in (1) come from the generators for $\mathcal{M}(N_{g,n-1})$ and the generators in (2) come from the generators for $\pi _1(N_{g,n-1})^+$ in Lemma~\ref{gen_pi+}. 

We calculate each $v_s$ and $w_{x,f}$ in the relation~(1) and (2) above. We take the subsurface $N^\prime $ of $N_{g,n}$ which is diffeomorphic to $N_{g,n-1}$ as in Figure~\ref{nonorisurf_emb2_boundary}. Since every simple closed curve used in a generator of the presentation for $\mathcal{M}(N_{g,n-1})$ is isotopic to a simple closed curve on ${\rm int}N^\prime $, we regard generators of the presentation for $\mathcal{M}(N_{g,n-1})$ as elements of $\mathcal{M}(N^\prime )$. In particular, the inclusion $\iota ^\prime :N^\prime \hookrightarrow N_{g,n}$ induces the injective homomorphism $\iota ^\prime _\ast :\mathcal{M}(N^\prime )\rightarrow \mathcal{M}(N_{g,n})$. By using the composition $\iota _\ast \circ \iota ^\prime _\ast :\mathcal{M}(N^\prime )\rightarrow \mathcal{M}^+(N_{g,n-1},x_0)$, we can take $v_s=1$ for each relator $s$ of the presentation for $\mathcal{M}(N_{g,n-1})$. 

Recall that $\bar{s}_{j,k;i}$ is the Dehn twist along the simple closed curve $\iota (\bar{\sigma }_{j,k;i})$ and $\bar{\sigma }_{j,k;i}$ is defined in Figure~\ref{sigmabar_jki}. For the relation~(2) above, we can take $w_{x,f}$ as follows. Assume $1\leq{i,m}\leq{g}$, $1\leq j\leq n-2$, $1\leq{l<t}\leq n-2$ and $[x_1,x_2]=x_1x_2x_1^{-1}x_2^{-1}$.
\begin{enumerate}
\item[(D1a)$^\prime $] $a_m(a_{i;n-1}a_i^{-1})a_m^{-1}=
		\left\{
		\begin{array}{ll}
		(a_{i;n-1}a_i^{-1})(a_{i-1;n-1}a_{i-1}^{-1}) \hspace{0.5cm}\text{for }m=i-1,\\
		(a_{i+1;n-1}a_{i+1}^{-1})^{-1}(a_{i;n-1}a_i^{-1})\hspace{0.5cm}\text{for }m=i+1,\\
		a_{i;n-1}a_i^{-1}\hspace{0.5cm}\text{for }m\neq{i-1,i+1},
		\end{array}
		\right.$
\item[(D1b)$^\prime $] $y(a_{i;n-1}a_i^{-1})y^{-1}=
		\left\{
		\begin{array}{ll}
		(a_{1;n-1}a_1^{-1})^{-1}r_{2;n-1}r_{1;n-1}\hspace{0.5cm}\text{for }i=1,\\
		(a_{2;n-1}a_2^{-1})r_{1;n-1}\hspace{0.5cm}\text{for }i=2,\\
		a_{i;n-1}a_i^{-1}\hspace{0.5cm}\text{for }i\geq 3,
		\end{array}
		\right.$
\item[(D1c)$^\prime $] $b(a_{i;n-1}a_i^{-1})b^{-1}=\\
		\left\{
		\begin{array}{ll}
		\{(a_{3;n-1}a_3^{-1})(a_{1;n-1}a_1^{-1})\}^{-1}(a_{1;n-1}a_1^{-1})\{(a_{3;n-1}a_3^{-1})(a_{1;n-1}a_1^{-1})\}\\
\text{for }i=1,\\
		\{(a_{3;n-1}a_3^{-1})(a_{1;n-1}a_1^{-1})\}^{-1}(a_{2;n-1}a_2^{-1})\{(a_{3;n-1}a_3^{-1})(a_{1;n-1}a_1^{-1})\}\\
\text{for }i=2,\\
		(a_{1;n-1}a_1^{-1})^{-1}(a_{3;n-1}a_3^{-1})(a_{1;n-1}a_1^{-1})\hspace{0.5cm}\text{for }i=3,\\
		(a_{4;n-1}a_4^{-1})(a_{3;n-1}a_3^{-1})(a_{1;n-1}a_1^{-1})\hspace{0.5cm}\text{for }i=4,\\
		a_{i;n-1}a_i^{-1}\hspace{0.5cm}\text{for }i\geq5,
		\end{array}
		\right.$
\item[(D1d)$^\prime $] $a_{m;l}(a_{i;n-1}a_i^{-1})a_{m;l}^{-1}=\\
		\left\{
		\begin{array}{ll}
		{[(s_{l,n-1}d_l^{-1})^{-1},(a_{m;n-1}a_m^{-1})^{-1}]}^{-1}(a_{i;n-1}a_i^{-1})\\
{[(s_{l,n-1}d_l^{-1})^{-1},(a_{m;n-1}a_m^{-1})^{-1}]}  \hspace{0.5cm}\text{for }m\leq{i-2},\\
		{[(a_{i-1;n-1}a_{i-1}^{-1})^{-1},(s_{l,n-1}d_l^{-1})^{-1}]}(a_{i;n-1}a_i^{-1})(s_{l,n-1}d_l^{-1})(a_{i-1;n-1}a_{i-1}^{-1}) \\ \text{for }m=i-1,\\
		\{(s_{l,n-1}d_l^{-1})(a_{i;n-1}a_i^{-1})\}^{-1}(a_{i;n-1}a_i^{-1})\{(s_{l,n-1}d_l^{-1})(a_{i;n-1}a_i^{-1})\} \\ \text{for }m=i,\\
		(a_{i+1;n-1}a_{i+1}^{-1})^{-1}(s_{l,n-1}d_l^{-1})^{-1}(a_{i;n-1}a_i^{-1}) \hspace{0.5cm}\text{for }m=i+1,\\
		a_{i;n-1}a_i^{-1}\hspace{0.5cm}\text{for }m\geq{i+2},
		\end{array}
		\right.$
\item[(D1e)$^\prime $] $r_{m;l}(a_{i;n-1}a_i^{-1})r_{m;l}^{-1}=\\
		\left\{
		\begin{array}{ll}
		{[(s_{l,n-1}d_l^{-1})^{-1},r_{m;n-1}^{-1}]}^{-1}(a_{i;n-1}a_i^{-1}){[(s_{l,n-1}d_l^{-1})^{-1},r_{m;n-1}^{-1}]}\\ \text{for }m\leq{i-1},\\
		\{ r_{i;n-1}^{-1}(s_{l,n-1}d_l^{-1})^{-1}r_{i;n-1}\} (s_{l,n-1}d_l^{-1})(a_{i;n-1}a_i^{-1})\\
		(\bar{s}_{l,n-1;i}d_l^{-1})^{-1}\{ r_{i;n-1}^{-1}(s_{l,n-1}d_l^{-1})^{-1}r_{i;n-1}\} ^{-1}\hspace{0.5cm}\text{for }m=i,\\
		r_{i+1;n-1}^{-1}(s_{l,n-1}d_l^{-1})^{-1}r_{i+1;n-1}(\bar{s}_{l,n-1;i+1}d_l^{-1})(a_{i;n-1}a_i^{-1})\\ \text{for }m=i+1,\\
		a_{i;n-1}a_i^{-1}\hspace{0.5cm}\text{for }m\geq{i+2},
		\end{array}
		\right.$
\item[(D1f)$^\prime $] $s_{l,t}(a_{i;n-1}a_i^{-1})s_{l,t}^{-1}=a_{i;n-1}a_i^{-1}$,
\item[(D1g)$^\prime $] $\bar{s}_{l,t}(a_{i;n-1}a_i^{-1})\bar{s}_{l,t}^{-1}=\\
		\left\{
		\begin{array}{ll}
		[(\bar{s}_{l,k}d_l^{-1})^{-1},s_{t,k}d_t^{-1}]^{-1}(s_{l,k}d_l^{-1})(a_{1;k}a_1^{-1})r_{1;k}(\bar{s}_{t,k}d_t^{-1})r_{1;k}^{-1}\\
(s_{l,k}d_l^{-1})^{-1}r_{1;k}(\bar{s}_{t,k}d_t^{-1})^{-1}r_{1;k}^{-1}[(\bar{s}_{l,k}d_l^{-1})^{-1},(s_{t,k}d_t^{-1})] \hspace{0.5cm}\text{for }i=1,\\
		\{[\bar{s}_{l,n-1}d_l^{-1},(s_{t,n-1}d_t^{-1})^{-1}][s_{l,n-1}d_l^{-1},r_{1;n-1}(\bar{s}_{t,n-1}d_t^{-1})^{-1}r_{1;n-1}^{-1}]\} \\(a_{i;n-1}a_i^{-1})\\
		\{[\bar{s}_{l,n-1}d_l^{-1},(s_{t,n-1}d_t^{-1})^{-1}][s_{l,n-1}d_l^{-1},r_{1;n-1}(\bar{s}_{t,n-1}d_t^{-1})^{-1}r_{1;n-1}^{-1}]\} ^{-1} \\ \text{for }i\geq2,
		\end{array}
		\right.$
\item[(D1h)$^\prime $] $d_l(a_{i;n-1}a_i^{-1})d_l^{-1}=a_{i;n-1}a_i^{-1}$,
\item[(D2a)$^\prime $] $a_mr_{i;n-1}a_m^{-1}=
		\left\{
		\begin{array}{ll}
		r_{i;n-1}r_{i-1;n-1}(a_{i-1;n-1}a_{i-1}^{-1})^{-1}r_{i;n-1}(a_{i-1;n-1}a_{i-1}^{-1})\\
\text{for }m=i-1,\\
		(a_{i;n-1}a_i^{-1})^{-1}r_{i+1;n-1}^{-1}(a_{i;n-1}a_i^{-1})\hspace{0.5cm}\text{for }m=i,\\
		r_{i;n-1}\hspace{0.5cm}\text{for }m\neq{i-1,i},
		\end{array}
		\right.$
\item[(D2b)$^\prime $] $yr_{i;n-1}y^{-1}=
		\left\{
		\begin{array}{ll}
		\{ (a_{1;n-1}a_1^{-1})^{-1}r_{2;n-1}r_{1;n-1}\} ^{-1}r_{1;n-1}^{-1}\\ \{ (a_{1;n-1}a_1^{-1})^{-1}r_{2;n-1}r_{1;n-1}\} \hspace{0.5cm} \text{for }i=1,\\
		(a_{1;n-1}a_1^{-1})r_{1;n-1}(a_{1;n-1}a_1^{-1})^{-1}r_{2;n-1}r_{1;n-1}\hspace{0.5cm}\text{for }i=2,\\
		r_{i;n-1}\hspace{0.5cm}\text{for }i\geq 3,
		\end{array}
		\right.$
\item[(D2c)$^\prime $] $br_{i;n-1}b^{-1}=
		\left\{
		\begin{array}{ll}
		(a_{1;n-1}a_1^{-1})^{-1}(a_{3;n-1}a_3^{-1})^{-1}(a_{2;n-1}a_2^{-1})^{-1}\\
		r_{4;n-1}^{-1}(a_{3;n-1}a_3^{-1})r_{3;n-1}^{-1}(a_{2;n-1}a_2^{-1})r_{2;n-1}^{-1}(a_{1;n-1}a_1^{-1})\\ \text{for }i=1,\\
		\{ (a_{3;n-1}a_3^{-1})(a_{1;n-1}a_1^{-1})\} ^{-1}(a_{1;n-1}a_1^{-1})(a_{3;n-1}a_3^{-1})r_{2;n-1}\\
		r_{1;n-1}(a_{1;n-1}a_1^{-1})^{-1}r_{2;n-1}(a_{2;n-1}a_2^{-1})^{-1}r_{3;n-1}\\
(a_{3;n-1}a_3^{-1})^{-1}r_{4;n-1}(a_{2;n-1}a_2^{-1})\{ (a_{3;n-1}a_3^{-1})(a_{1;n-1}a_1^{-1})\} \\ \text{for }i=2,\\
		\{ (a_{3;n-1}a_3^{-1})(a_{1;n-1}a_1^{-1})\} ^{-1}r_{4;n-1}^{-1}\\
		(a_{3;n-1}a_3^{-1})r_{3;n-1}^{-1}(a_{2;n-1}a_2^{-1})r_{2;n-1}^{-1}(a_{1;n-1}a_1^{-1})r_{1;n-1}^{-1}\\
		(a_{2;n-1}a_2^{-1})^{-1}r_{3;n-1}(a_{3;n-1}a_3^{-1})^{-1}(a_{1;n-1}a_1^{-1})^{-1}\\
\{ (a_{3;n-1}a_3^{-1})(a_{1;n-1}a_1^{-1})\} \hspace{0.5cm}\text{for }i=3,\\
		r_{4;n-1}(a_{2;n-1}a_2^{-1})r_{1;n-1}(a_{1;n-1}a_1^{-1})^{-1}r_{2;n-1}(a_{2;n-1}a_2^{-1})^{-1}\\
r_{3;n-1}(a_{3;n-1}a_3^{-1})^{-1}r_{4;n-1}(a_{3;n-1}a_3^{-1})(a_{1;n-1}a_1^{-1})\\
\text{for }i=4,\\
		r_{i;n-1}\hspace{0.5cm}\text{for }i\geq5,
		\end{array}
		\right.$
\item[(D2d)$^\prime $] $a_{m;l}r_{i;n-1}a_{m;l}^{-1}=\\
		\left\{
		\begin{array}{ll}
		{[(s_{l,n-1}d_l^{-1})^{-1},(a_{m;n-1}a_m^{-1})^{-1}]}^{-1}r_{i;n-1}{[(s_{l,n-1}d_l^{-1})^{-1},(a_{m;n-1}a_m^{-1})^{-1}]}\\
\text{for }m\leq{}i-2,\\
		\{ (s_{l,n-1}d_l^{-1})(a_{i-1;n-1}a_{i-1}^{-1})\} ^{-1}(a_{i-1;n-1}a_{i-1}^{-1})(s_{l,n-1}d_l^{-1})r_{i;n-1}\\
		r_{i-1;n-1}(a_{i-1;n-1}a_{i-1}^{-1})^{-1}r_{i;n-1}(\bar{s}_{l,n-1;i}d_l^{-1})\{ (s_{l,n-1}d_l^{-1})(a_{i-1;n-1}a_{i-1}^{-1})\} \\
\text{for }m=i-1,\\
		(a_{i;n-1}a_i^{-1})^{-1}(s_{l,n-1}d_l^{-1})^{-1}r_{i+1;n-1}^{-1}(a_{i;n-1}a_i^{-1})(\bar{s}_{l,n-1;i}d_l^{-1})^{-1}\\
\text{for }m=i,\\
		r_{i;n-1}\hspace{0.5cm}\text{for }m\geq{i+1},
		\end{array}
		\right.$
\item[(D2e)$^\prime $] $r_{m;l}r_{i;n-1}r_{m;l}^{-1}=
		\left\{
		\begin{array}{ll}
		{[(s_{l,n-1}d_l^{-1})^{-1},r_{m;n-1}^{-1}]}^{-1}r_{i;n-1}{[(s_{l,n-1}d_l^{-1})^{-1},r_{m;n-1}^{-1}]}\\
\text{for }m\leq{i-1},\\
		\{(s_{l,n-1}d_l^{-1})r_{i;n-1}\}^{-1}r_{i;n-1}\{(s_{l,n-1}d_l^{-1})r_{i;n-1}\} \\
\text{for }m=i,\\
		r_{i;n-1}\hspace{0.5cm}\text{for }m\geq{i+1},
		\end{array}
		\right.$
\item[(D2f)$^\prime $]	$s_{l,t}r_{i;n-1}s_{l,t}^{-1}=r_{i;n-1}$,
\item[(D2g)$^\prime $] $\bar{s}_{l,t}r_{i;n-1}\bar{s}_{l,t}^{-1}=\\
		\left\{
		\begin{array}{ll}
		[s_{t,n-1}d_t^{-1},(\bar{s}_{l,n-1}d_l^{-1})^{-1}][r_{1;n-1}(\bar{s}_{t,n-1}d_t^{-1})r_{1;n-1}^{-1},(s_{l,n-1}d_l^{-1})^{-1}]r_{1;n-1} \\
\text{for }i=1,\\
		\{ [\bar{s}_{l,n-1}d_l^{-1},(s_{t,n-1}d_t^{-1})^{-1}][s_{l,n-1}d_l^{-1},r_{1;n-1}(\bar{s}_{t,n-1}d_t^{-1})^{-1}r_{1;n-1}^{-1}]\} ^{-1}\\r_{1;n-1}
		\{ [\bar{s}_{l,n-1}d_l^{-1},(s_{t,n-1}d_t^{-1})^{-1}][s_{l,n-1}d_l^{-1},r_{1;n-1}(\bar{s}_{t,n-1}d_t^{-1})^{-1}r_{1;n-1}^{-1}]\} \\
\text{for }i\geq2,
		\end{array}
		\right.$
\item[(D2h)$^\prime $] $d_lr_{i;n-1}d_l^{-1}=r_{i;n-1}$,
\item[(D3a)$^\prime $] $a_m(s_{j,n-1}d_j^{-1})a_m^{-1}=s_{j,n-1}d_j^{-1}$,
\item[(D3b)$^\prime $] $y(s_{j,n-1}d_j^{-1})y^{-1}=s_{j,n-1}d_j^{-1}$,
\item[(D3c)$^\prime $] $b(s_{j,n-1}d_j^{-1})b^{-1}=s_{j,n-1}d_j^{-1}$,
\item[(D3d)$^\prime $] $a_{m;l}(s_{j,n-1}d_j^{-1})a_{m;l}^{-1}=\\
		\left\{
		\begin{array}{ll}
		{[(s_{l,n-1}d_l^{-1})^{-1},(a_{m;n-1}a_m^{-1})^{-1}]}^{-1}(s_{j,n-1}d_j^{-1})\\ 
{[(s_{l,n-1}d_l^{-1})^{-1},(a_{m;n-1}a_m^{-1})^{-1}]} \hspace{0.5cm} \text{for }l>j,\\
		(a_{m;n-1}a_m^{-1})^{-1}(s_{j,n-1}d_j^{-1})(a_{m;n-1}a_m^{-1})\hspace{0.5cm}\text{for }l=j,\\
		s_{j,n-1}d_j^{-1}\hspace{0.5cm}\text{for }l=j,
		\end{array}
		\right.$
\item[(D3e)$^\prime $] $r_{m;l}(s_{j,n-1}d_j^{-1})r_{m;l}^{-1}=\\
		\left\{
		\begin{array}{ll}
		{[(s_{l,n-1}d_l^{-1})^{-1},r_{m;n-1}^{-1}]}^{-1}(s_{j,n-1}d_j^{-1}){[(s_{l,n-1}d_l^{-1})^{-1},r_{m;n-1}^{-1}]} \\
\text{for }l>j,\\
		r_{m;n-1}^{-1}(s_{j,n-1}d_j^{-1})r_{m;n-1}\hspace{0.5cm}\text{for }l=j,\\
		s_{j,n-1}d_j^{-1}\hspace{0.5cm}\text{for }l<j,
		\end{array}
		\right.$
\item[(D3f)$^\prime $] $s_{l,t}(s_{j,n-1}d_j^{-1})s_{l,t}^{-1}=\\
		\left\{
		\begin{array}{ll}
		\{(s_{t,n-1}d_t^{-1})(s_{j,n-1}d_j^{-1})\}^{-1}(s_{j,n-1}d_j^{-1})\{(s_{t,n-1}d_t^{-1})(s_{j,n-1}d_j^{-1})\} \\
\text{for }l=j,\\
		{[(s_{t,n-1}d_t^{-1})^{-1},(s_{l,n-1}d_l^{-1})^{-1}]}^{-1}(s_{j,n-1}d_j^{-1})\\
{[(s_{t,n-1}d_t^{-1})^{-1},(s_{l,n-1}d_l^{-1})^{-1}]} \hspace{0.5cm}\text{for }l<j<t,\\
		(s_{l,n-1}d_l^{-1})^{-1}(s_{j,n-1}d_j^{-1})(s_{l,n-1}d_l^{-1})\hspace{0.5cm}\text{for }t=j,\\
		s_{j,n-1}d_j^{-1}\hspace{0.5cm}\text{for the other cases},
		\end{array}
		\right.$
\item[(D3g)$^\prime $] $\bar{s}_{l,t}(s_{j,n-1}d_j^{-1})\bar{s}_{l,t}^{-1}=\\
		\left\{
		\begin{array}{ll}
		\{[\bar{s}_{l,n-1}d_l^{-1},(s_{t,n-1}d_t^{-1})^{-1}][s_{l,n-1}d_l^{-1},r_{1;n-1}(\bar{s}_{t,n-1}d_t^{-1})^{-1}r_{1;n-1}^{-1}]\}^{-1}\\
(s_{j,n-1}d_j^{-1})\\
\{[\bar{s}_{l,n-1}d_l^{-1},(s_{t,n-1}d_t^{-1})^{-1}][s_{l,n-1}d_l^{-1},r_{1;n-1}(\bar{s}_{t,n-1}d_t^{-1})^{-1}r_{1;n-1}^{-1}]\} \\
\text{for }l>j,\\
		\{[\bar{s}_{j,n-1}d_j^{-1},(s_{t,n-1}d_t^{-1})^{-1}](s_{j,n-1}d_j^{-1})r_{1;n-1}(\bar{s}_{t,n-1}d_t^{-1})^{-1}r_{1;n-1}^{-1}\}^{-1}\\
(s_{j,n-1}d_j^{-1})\\
\{[\bar{s}_{j,n-1}d_j^{-1},(s_{t,n-1}d_t^{-1})^{-1}](s_{j,n-1}d_j^{-1})r_{1;n-1}(\bar{s}_{t,n-1}d_t^{-1})^{-1}r_{1;n-1}^{-1}\} \\
\text{for }l=j,\\
		{[(\bar{s}_{l,n-1}d_l^{-1})^{-1},s_{t,n-1}d_t^{-1}]}^{-1}(s_{j,n-1}d_j^{-1}){[(\bar{s}_{l,n-1}d_l^{-1})^{-1},s_{t,n-1}d_t^{-1}]} \\
\text{for }l<j<t,\\
		\{ (\bar{s}_{l,n-1}d_l^{-1})(s_{j,n-1}d_j^{-1})^{-1}\} ^{-1}(s_{j,n-1}d_j^{-1})\{ (\bar{s}_{l,n-1}d_l^{-1})(s_{j,n-1}d_j^{-1})^{-1}\} \\
\text{for }t=j,\\
		s_{j,n-1}d_j^{-1}\hspace{0.5cm}\text{for }t<j,
		\end{array}
		\right.$
\item[(D3h)$^\prime $] $d_l(s_{j,n-1}d_j^{-1})d_l^{-1}=s_{j,n-1}d_j^{-1}$,
\item[(D4a)$^\prime $] $a_m(\bar{s}_{j,n-1}d_j^{-1})a_m^{-1}=\\
		\left\{
		\begin{array}{ll}
		\{r_{1;n-1}^{-1}r_{2;n-1}^{-1}(a_{1;n-1}a_1^{-1})\}^{-1}(\bar{s}_{j,n-1}d_j^{-1})\{r_{1;n-1}^{-1}r_{2;n-1}^{-1}(a_{1;n-1}a_1^{-1})\} \\
\text{for }m=1,\\
		\bar{s}_{j,n-1}d_j^{-1}\hspace{0.5cm}\text{for }m\geq2,
		\end{array}
		\right.$
\item[(D4b)$^\prime $] $y(\bar{s}_{j,n-1}d_j^{-1})y^{-1}=\{ r_{1;n-1}^{-1}(a_{1;n-1}a_1^{-1})^{-2}r_{2;n-1}r_{1;n-1}\} ^{-1}(\bar{s}_{j,n-1}d_j^{-1})\\ 
\{ r_{1;n-1}^{-1}(a_{1;n-1}a_1^{-1})^{-2}r_{2;n-1}r_{1;n-1}\},$
\item[(D4c)$^\prime $] $b(\bar{s}_{j,n-1}d_j^{-1})b^{-1}=\{r_{1;n-1}^{-1}(a_{2;n-1}a_2^{-1})^{-1}r_{4;n-1}^{-1}(a_{3;n-1}a_3^{-1})r_{3;n-1}^{-1}\\
(a_{2;n-1}a_2^{-1})r_{2;n-1}^{-1}(a_{1;n-1}a_1^{-1})\}^{-1}(\bar{s}_{j,n-1}d_j^{-1})
		\{r_{1;n-1}^{-1}(a_{2;n-1}a_2^{-1})^{-1}\\
r_{4;n-1}^{-1}(a_{3;n-1}a_3^{-1})r_{3;n-1}^{-1}(a_{2;n-1}a_2^{-1})r_{2;n-1}^{-1}(a_{1;n-1}a_1^{-1})\},$
\item[(D4d)$^\prime $] $a_{m;l}(\bar{s}_{j,n-1}d_j^{-1})a_{m;l}^{-1}=\\
		\left\{
		\begin{array}{ll}
		\{r_{1;n-1}^{-1}r_{2;n-1}^{-1}(a_{1;n-1}a_1^{-1})(\bar{s}_{l,n-1}d_l^{-1})^{-1}\}^{-1}(\bar{s}_{j,n-1}d_j^{-1})\\
		\{r_{1;n-1}^{-1}r_{2;n-1}^{-1}(a_{1;n-1}a_1^{-1})(\bar{s}_{l,n-1}d_l^{-1})^{-1}\} \hspace{0.5cm} \text{for }m=1,l<j,\\
		\{(\bar{s}_{l,n-1}d_l^{-1})^{-1}r_{1;n-1}^{-1}r_{2;n-1}^{-1}(a_{1;n-1}a_1^{-1})\}^{-1}(\bar{s}_{j,n-1}d_j^{-1})\\
		\{(\bar{s}_{l,n-1}d_l^{-1})^{-1}r_{1;n-1}^{-1}r_{2;n-1}^{-1}(a_{1;n-1}a_1^{-1})\} \hspace{0.5cm} \text{for }m=1,l>j,\\
		\{(a_{m-1;n-1}a_{m-1}^{-1})r_{m-1;n-1}\cdots(a_{2;n-1}a_2^{-1})r_{2;n-1}^{-1}(a_{1;n-1}a_1^{-1})\}^{-1}\\
		(\bar{s}_{j,n-1;m+1}d_j^{-1})\\
		\{(a_{m-1;n-1}a_{m-1}^{-1})r_{m-1;n-1}\cdots(a_{2;n-1}a_2^{-1})r_{2;n-1}^{-1}(a_{1;n-1}a_1^{-1})\} \\
\text{for }m\geq2,l=j,\\
		\{(a_{m-1;n-1}a_{m-1}^{-1})r_{m-1;n-1}\cdots(a_{2;n-1}a_2^{-1})r_{2;n-1}^{-1}(a_{1;n-1}a_1^{-1})\}^{-1}\\
		\{(\bar{s}_{l,n-1;m}d_l^{-1})^{-1}r_{m;n-1}^{-1}(\bar{s}_{l,n-1;m+1}d_l^{-1})\}^{-1}
		(\bar{s}_{j,n-1;m}d_j^{-1})\\
		\{(\bar{s}_{l,n-1;m}d_l^{-1})^{-1}r_{m;n-1}^{-1}(\bar{s}_{l,n-1;m+1}d_l^{-1})\}\\
		\{(a_{m-1;n-1}a_{m-1}^{-1})r_{m-1;n-1}\cdots(a_{2;n-1}a_2^{-1})r_{2;n-1}^{-1}(a_{1;n-1}a_1^{-1})\} \\
\text{for }m\geq2,l>j,\\
		\bar{s}_{j,n-1}d_j^{-1}\hspace{0.5cm}\text{for the other cases},
		\end{array}
		\right.$
\item[(D4e)$^\prime $] $r_{m;l}(\bar{s}_{j,n-1}d_j^{-1})r_{m;l}^{-1}=\\
		\left\{
		\begin{array}{ll}
		\{r_{1;n-1}^{-1}(\bar{s}_{l,n-1}d_l^{-1})^{-1}(s_{l,n-1}d_l^{-1})r_{1;n-1}\}^{-1}(\bar{s}_{j,n-1}d_j^{-1})\\
		\{r_{1;n-1}^{-1}(\bar{s}_{l,n-1}d_l^{-1})^{-1}(s_{l,n-1}d_l^{-1})r_{1;n-1}\}\hspace{0.5cm}\text{for }m=1,l<j,\\
		\{(s_{j,n-1}d_j^{-1})r_{1;n-1}\}^{-1}(\bar{s}_{j,n-1}d_j^{-1})\{(s_{j,n-1}d_j^{-1})r_{1;n-1}\} \\
\text{for }m=1,l=j,\\
		\{(\bar{s}_{l,n-1}d_l^{-1})^{-1}r_{1;n-1}^{-1}(s_{l,n-1}d_l^{-1})r_{1;n-1}\}^{-1}(\bar{s}_{j,n-1}d_j)^{-1}\\
		\{(\bar{s}_{l,n-1}d_l^{-1})^{-1}r_{1;n-1}^{-1}(s_{l,n-1}d_l^{-1})r_{1;n-1}\} \hspace{0.5cm}\text{for }m=1,l>j,\\
		\{(a_{m-1;n-1}a_{m-1}^{-1})r_{m-1;n-1}\cdots(a_{2;n-1}a_2^{-1})r_{2;n-1}^{-1}(a_{1;n-1}a_1^{-1})\}^{-1}\\
		(\bar{s}_{j,n-1;m}d_j^{-1})\\
		\{(a_{m-1;n-1}a_{m-1}^{-1})r_{m-1;n-1}\cdots(a_{2;n-1}a_2^{-1})r_{2;n-1}^{-1}(a_{1;n-1}a_1^{-1})\} \\
\text{for }m\geq2,l=j,\\
		\{(a_{m-1;n-1}a_{m-1}^{-1})r_{m-1;n-1}\cdots(a_{2;n-1}a_2^{-1})r_{2;n-1}^{-1}(a_{1;n-1}a_1^{-1})\}^{-1}\\
		\{(\bar{s}_{l,n-1;m}d_l^{-1})^{-1}r_{m;n-1}^{-1}(\bar{s}_{l,n-1;m}d_l^{-1})\}^{-1}\\
		(\bar{s}_{j,n-1;m}d_j^{-1})\\
		\{(\bar{s}_{l,n-1;m}d_l^{-1})^{-1}r_{m;n-1}^{-1}(\bar{s}_{l,n-1;m}d_l^{-1})\}\\
		\{(a_{m-1;n-1}a_{m-1}^{-1})r_{m-1;n-1}\cdots(a_{2;n-1}a_2^{-1})r_{2;n-1}^{-1}(a_{1;n-1}a_1^{-1})\} \\
\text{for }m\geq2,l>j,\\
		\bar{s}_{j,n-1}d_j^{-1}\hspace{0.5cm}\text{for }m\geq2,l<j,
		\end{array}
		\right.$
\item[(D4f)$^\prime $] $s_{l,t}(\bar{s}_{j,n-1}d_j^{-1})s_{l,t}^{-1}=\\
		\left\{
		\begin{array}{ll}
		(\bar{s}_{l,n-1}d_l^{-1})^{-1}(\bar{s}_{j,n-1}d_j^{-1})(\bar{s}_{l,n-1}d_l^{-1})\hspace{0.5cm}\text{for }t=j,\\
		{[(\bar{s}_{t,n-1}d_t^{-1})^{-1},(\bar{s}_{l,n-1}d_l^{-1})^{-1}]}^{-1}(\bar{s}_{j,n-1}d_j^{-1})\\
		{[(\bar{s}_{t,n-1}d_t^{-1})^{-1},(\bar{s}_{l,n-1}d_l^{-1})^{-1}]}\hspace{0.5cm}\text{for }l<j<t,\\
		\{(\bar{s}_{t,n-1}d_t^{-1})(\bar{s}_{j,n-1}d_j^{-1})\}^{-1}(\bar{s}_{j,n-1}d_j^{-1})\{(\bar{s}_{t,n-1}d_t^{-1})(\bar{s}_{j,n-1}d_j^{-1})\} \\
\text{for }l=j,\\
		\bar{s}_{j,n-1}d_j^{-1}\hspace{0.5cm}\text{for the other cases},
		\end{array}
		\right.$
\item[(D4g)$^\prime $] $\bar{s}_{l,t}(\bar{s}_{j,n-1}d_j^{-1})\bar{s}_{l,t}^{-1}=\\
		\left\{
		\begin{array}{ll}
		{[(\bar{s}_{t,n-1}d_t^{-1}),r_{1;n-1}^{-1}(s_{l,n-1}d_l^{-1})^{-1}r_{1;n-1}]}^{-1}(\bar{s}_{j,n-1}d_j^{-1})\\
		{[(\bar{s}_{t,n-1}d_t^{-1}),r_{1;n-1}^{-1}(s_{l,n-1}d_l^{-1})^{-1}r_{1;n-1}]} \hspace{0.5cm}\text{for }t<j,\\
		\{r_{1;n-1}^{-1}(s_{l,n-1}d_l^{-1})^{-1}r_{1;n-1}\}(\bar{s}_{j,n-1}d_j^{-1})\{r_{1;n-1}^{-1}(s_{l,n-1}d_l^{-1})^{-1}r_{1;n-1}\}^{-1}\\
\text{for }t=j,\\
		(s_{t,n-1}d_t^{-1})(\bar{s}_{j,n-1}d_j^{-1})(s_{t,n-1}d_t^{-1})^{-1}\hspace{0.5cm}\text{for }l=j,\\
		{[(\bar{s}_{l,n-1}d_l^{-1})^{-1},(s_{t,n-1}d_t^{-1})]}^{-1}(\bar{s}_{j,n-1}d_j^{-1}){[(\bar{s}_{l,n-1}d_l^{-1})^{-1},(s_{t,n-1}d_t^{-1})]} \\
\text{for }l>j,\\
		\bar{s}_{j,n-1}d_j^{-1}\hspace{0.5cm}\text{for }l<j<t,
		\end{array}
		\right.$
\item[(D4h)$^\prime $] $d_l(\bar{s}_{j,n-1}d_j^{-1})d_l^{-1}=\bar{s}_{j,n-1}d_j^{-1}$.
\end{enumerate}
Since $j_{x_0}(\tilde{f}(x))=\tilde{f}j_{x_0}(x)\tilde{f}^{-1}$ for $x\in \pi _1(N_{g,n-1}, x_0)$ and $f\in \mathcal{M}(N_{g,n-1})$, it is useful for obtaining $w_{x,f}$ to find a product $w_{x,f}^\prime $ of generators for $\pi _1(N_{g,n-1})^+$ in Lemma~\ref{gen_pi+} such that $\tilde{f}(x)=w_{x,f}^\prime $.
For example, in Relation~(D1e)$^\prime $ for $m=i$, $r_{i;l}(x_{i+1}x_i)$ is represented by the loop as on the right-hand side of Figure~\ref{rel_d1e2prime}. Thus we have 
\begin{eqnarray*}
r_{i;l}(x_{i+1}x_i)&=&\{ x_i^{-2}y_l^{-1}x_i^2\} y_l(x_{i+1}x_i)\bar{y}_{i;l}^{-1}\{ x_i^{-2}y_l^{-1}x_i^2\} ^{-1} \in \pi _1(N_{g,n-1})^+
\end{eqnarray*}
and also have
\begin{eqnarray*}
r_{i;l}(a_{i;n-1}a_i^{-1})r_{i;l}^{-1}&=&\{ r_{i;n-1}^{-1}(s_{l,n-1}d_l^{-1})^{-1}r_{i;n-1}\} (s_{l,n-1}d_l^{-1})(a_{i;n-1}a_i^{-1})\\
&&(\bar{s}_{l,n-1;i}d_l^{-1})^{-1}\{ r_{i;n-1}^{-1}(s_{l,n-1}d_l^{-1})^{-1}r_{i;n-1}\} ^{-1}.
\end{eqnarray*}
Therefor $\mathcal{M}^+(N_{g,n-1},x_0)$ has the presentation which is obtained from the finite presentation for $\mathcal{M}(N_{g,n-1})$ by adding generators $a_{i;n-1}$ for $1\leq i\leq g-1$, $r_{i;n-1}$ for $1\leq i\leq g$, $s_{i,n-1}$ and $\bar{s}_{i,n-1}$ for $1\leq i\leq n-2$, and Relations~(D1a)$^\prime $-(D4h)$^\prime $.

Next, we apply Lemma~\ref{presentation_exact} to the exact sequence~\ref{exact2}. Let $X_2^\prime $ be the set of added generators $a_{i;n-1}$, $r_{i;n-1}$, $s_{i;n-1}$ and $\bar{s}_{i;n-1}$ to the presentation for $\mathcal{M}^+(N_{g,n-1},x_0)$. Note that $d_{n-1}$ commutes with every element of $\mathcal{M}(N_{g,n})$. $\mathcal{M}(N_{g,n})$ admits the presentation which has the generating set $X_1\cup X_2^\prime $ and relations as follows: 
\begin{enumerate}
\item[(1)] For each relator $r$ of the finite presentation for $\mathcal{M}^+(N_{g,n-1},x_0)$ and the lift $\tilde{r}$ of $r$ with respect to $\iota _\ast $, there exists $\varepsilon _r\in \mathbb Z$ such that 
\[
\tilde{r}=d_{n-1}^{\varepsilon _r}.
\] 
\item[(2)] For each generator $x$ of the finite presentation for $\mathcal{M}^+(N_{g,n-1},x_0)$ and the lift $\tilde{x}$ of $x$ with respect to $\iota _\ast $, 
\[
[d_{n-1}, \tilde{x}]=1.
\]
\end{enumerate} 
Thus we obtain Relations~(D0) from the relations in (2) above. Note that Relations~(D0) are obtained from Relation~(I).

We compute $\varepsilon _r$ for each $r$. We have the homomorphism $\iota ^\prime _\ast :\mathcal{M}(N^\prime )\hookrightarrow \mathcal{M}(N_{g,n})$ induced by the inclusion $\iota ^\prime :N^\prime \hookrightarrow N_{g,n}$. Thus for each relator $r$ of the finite presentation for $\mathcal{M}^+(N_{g,n-1},x_0)$ which obtained from a relator of the finite presentation for $\mathcal{M}(N_{g,n-1})$, we have $\varepsilon _r=0$. Hence we obtain Relations~(A1)-(B8) and (D0)-(D4g) for $1\leq k\leq n-2$ in $\mathcal{M}(N_{g,n})$. By the inductive argument on $n$ and the argument below, we can show that Relations~(A1)-(B8) and (D0)-(D4g) for $1\leq k\leq n-2$ are obtained from Relations~(I) and (I\hspace{-0.04cm}I\hspace{-0.04cm}I). 

We remark that the lift of Relations~(D1h)$^\prime $, (D2h)$^\prime $, (D3h)$^\prime $ and (D4h)$^\prime $ are obtained from Relations~(D0). Hence Relations ~(D1h), (D2h), (D3h) and (D4h) are obtained from Relation~(I).

Relations~(D1a)$^\prime $ for $m\not=i-1,i+1$, (D1b)$^\prime $ for $i\geq 3$, (D1c)$^\prime $ for $i\not=4$, (D1d)$^\prime $ for $m\leq i-2$, $m=i$ and $m\geq i+2$, (D1e)$^\prime $ for $m\leq i-1$ and $m\geq i-2$, (D1f)$^\prime $, (D1g)$^\prime $ for $i\geq 2$, (D1h)$^\prime $, (D2a)$^\prime $ for $m\not=i-1$, (D2b)$^\prime $ for $i\not=2$, (D2c)$^\prime $ for $i\geq 5$, (D2d)$^\prime $ for $m\leq i-2$ and $m\geq i+1$, (D2e)$^\prime $, (D2f)$^\prime $, (D2g)$^\prime $ for $i\geq 2$, (D2h)$^\prime $ and (D3a)$^\prime $-(D4h)$^\prime $ are obtained from the braid relations. Hence for their relators $r$'s, we have $\varepsilon _r=0$. Thus we obtain Relations~(D1a) for $m\not=i-1,i+1$, (D1b) for $i\geq 3$, (D1c) for $i\not=4$, (D1d) for $m\leq i-2$, $m=i$ and $m\geq i+2$, (D1e) for $m\leq i-1$ and $m\geq i-2$, (D1f), (D1g) for $i\geq 2$, (D2a) for $m\not=i-1$, (D2b) for $i\not=2$, (D2c) for $i\geq 5$, (D2d) for $m\leq i-2$ and $m\geq i+1$, (D2e), (D2f), (D2g) for $i\geq 2$ and (D3a)-(D4g) when $k=n-1$ by Relation~(I). 

For the other relators, i.e. Relations~(D1a)$^\prime $ for $m=i-1,i+1$, (D1b)$^\prime $ for $i=1, 2$, (D1c)$^\prime $ for $i=4$, (D1d)$^\prime $ for $m=i-1, i+1$, (D1e)$^\prime $ for $m=i, i+1$, (D1g)$^\prime $ for $i=1$, (D2a)$^\prime $ for $m=i-1$, (D2b)$^\prime $ for $i=2$, (D2c)$^\prime $ for $i=1,2,3,4$, (D2d)$^\prime $ for $m=i-1,i$ and (D2g)$^\prime $ for $i=1$ when $k=n-1$, we compute their $\varepsilon _r$ in Section~\ref{section_epsilon_r}. By the computations in Section~\ref{section_epsilon_r}, we can show that the remaining relations of presentation in Proposition~\ref{thm_finite_presentation} are obtained from Relations~(I) and ($\text{I\hspace{-0.04cm}I\hspace{-0.04cm}I}$) and we have completed the proof of Proposition~\ref{thm_finite_presentation} except for computations of their $\varepsilon _r$'s.

\end{proof}

\begin{figure}[h]
\includegraphics[scale=0.7]{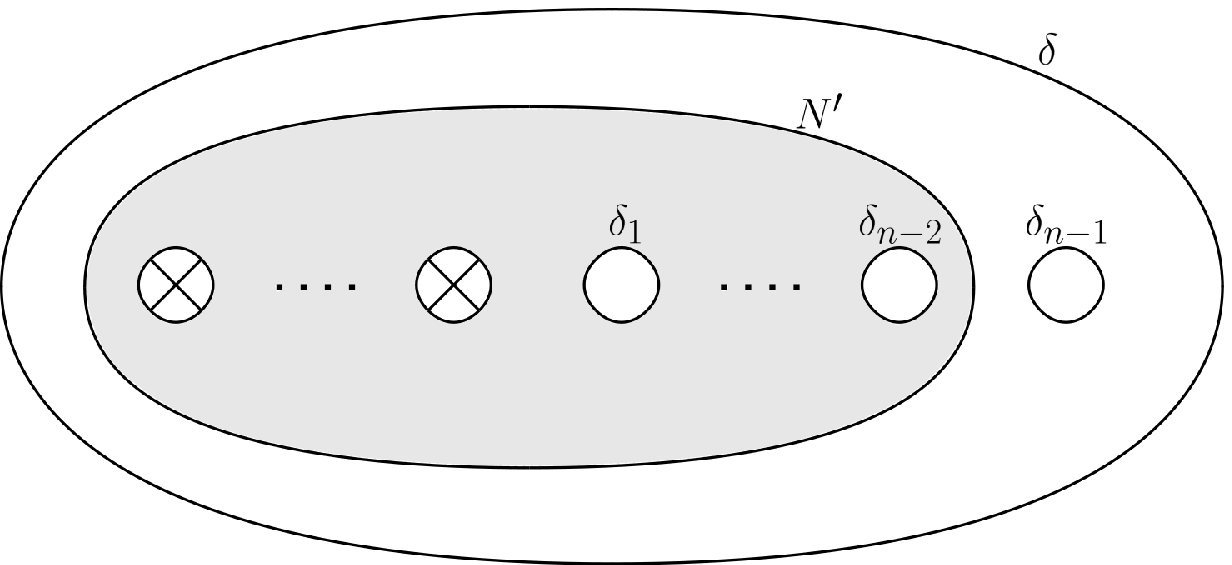}
\caption{The subsurface $N^\prime $ of $N_{g,n}$ which is diffeomorphic to $N_{g,n-1}$.}\label{nonorisurf_emb2_boundary}
\end{figure}
\begin{figure}[h]
\includegraphics[scale=0.7]{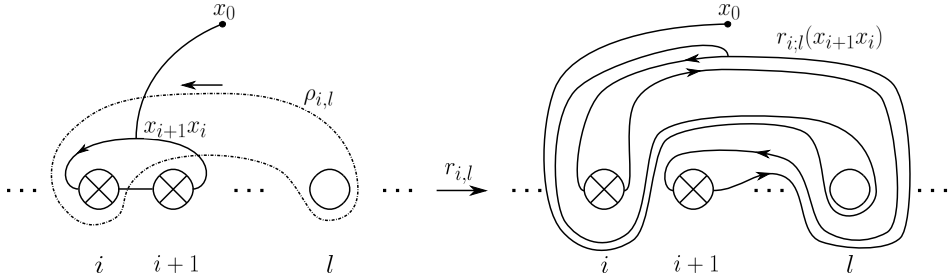}
\caption{Loop $r_{i;l}(x_{i+1}x_i)$ on $N_{g,n-1}$ based at $x_0$ for $1\leq i\leq n-2$.}\label{rel_d1e2prime}
\end{figure}

\subsection{Computing $\varepsilon _r$}\label{section_epsilon_r}

In this section, we compute $\varepsilon _r$ for Relators~(D1a)$^\prime $ for $m=i-1,i+1$, (D1b)$^\prime $ for $i=1, 2$, (D1c)$^\prime $ for $i=4$, (D1d)$^\prime $ for $m=i-1, i+1$, (D1e)$^\prime $ for $m=i, i+1$, (D1g)$^\prime $ for $i=1$, (D2a)$^\prime $ for $m=i-1$, (D2b)$^\prime $ for $i=2$, (D2c)$^\prime $ for $i=1,2,3,4$, (D2d)$^\prime $ for $m=i-1,i$ and (D2g)$^\prime $ for $i=1$ when $k=n-1$. We define 
\begin{eqnarray*}
\bar{y}_i&:=&x_1^{-1}y_ix_1 \hspace{0.3cm}\text{ for } i=1,\dots ,n-2,\\ 
\bar{y}_{i;j}&:=&\{ (x_2x_1)^{-1}x_2^2\cdots (x_ix_{i-1})^{-1}x_i^2\} ^{-1}\bar{y}_i\{ (x_2x_1)^{-1}x_2^2\cdots (x_ix_{i-1})^{-1}x_i^2\} \\
&&\text{for } 1\leq i\leq n-2,\ 2\leq j\leq g. 
\end{eqnarray*}
Remark that $\bar{y}_{i;j}=x_j^{-1}y_ix_j$. By using Relations~(L+), (L-) and (L0) (see Lemma~\ref{L+} and \ref{L0}), when $k=n-1$, we can compute $\varepsilon _r$ as follows: $\varepsilon _r=2$ for Relation~(D1e)$^\prime $ when $m=i+1$, $\varepsilon _r=1$ for Relation~(D1d)$^\prime $ when $m=i+1$, (D1h)$^\prime $ when $m=i-1$ and (D2b)$^\prime $ when $i=2$, $\varepsilon _r=-1$ for Relation~(D1b)$^\prime $ when $i=2$, (D1c)$^\prime $ when $i=4$ and (D1d)$^\prime $, $\varepsilon _r=-2$ for Relation~(D1b)$^\prime $ when $i=1$, (D1e)$^\prime $ when $m=i$ and $\varepsilon _r=0$ for the other relations. In particular, Relations~(D1a) when $m=i-1,i+1$ and (D1b) when $i=2$ are obtained from a single braid relation and one of Relations~(L+), (L-) and (L0), and for Relations~(D1b) when $i=1$, (D1c) when $i=4$, (D1d) when $m=i+1$, (D2a) when $m=i-1$, (D2b) when $i=2$, (D2d) when $m=i$,  we can compute $\varepsilon _r$ easily. As examples, we compute $\varepsilon _r$ for Relation~(D1e)$^\prime $ when $m=i$ and Relation~(D2c)$^\prime $ when $i=2$ by using figures. For the other cases, we give computations of $\varepsilon _r$ by only deformations of the expressions.

For Relation~(D1e)$^\prime $ when $m=i$ and $k=n-1$, we have the following relation in $\mathcal{M}(N_{g,n})$ by Figure~\ref{rel_d1e2prime_epsilon1}:
\begin{eqnarray*}
\underline{\Delta (y_l)\Delta ((x_{i+1}x_i))}\Delta (\bar{y}_{i;l})^{-1}
&\stackrel{(\text{L+})}{=}& \underline{\Delta ((y_l(x_{i+1}x_i))\Delta (\bar{y}_{i;l}^{-1})}d_{n-1} \\
&\stackrel{(\text{L+})}{=}& \Delta (y_l(x_{i+1}x_i)\bar{y}_{i;l}^{-1})d_{n-1}^2 .
\end{eqnarray*}
Note that $\Delta (r_{i;l}(x_ix_{i+1}))=r_{i;l}(a_{i;n-1}a_i^{-1})r_{i;l}^{-1}$ by the braid relation. Hence we have 
\begin{eqnarray*}
&&\{ r_{i;n-1}^{-1}(s_{l,n-1}d_l^{-1})^{-1}r_{i;n-1}\} (s_{l,n-1}d_l^{-1})(a_{i;n-1}a_i^{-1})(\bar{s}_{l,n-1;i}d_l^{-1})^{-1}\\
&&\{ r_{i;n-1}^{-1}(s_{l,n-1}d_l^{-1})^{-1}r_{i;n-1}\} ^{-1}\\
&=&\{ r_{i;n-1}^{-1}(s_{l,n-1}d_l^{-1})^{-1}r_{i;n-1}\} \Delta (y_l)\Delta ((x_{i+1}x_i))\Delta (\bar{y}_{i;l})^{-1}\\
&&\{ r_{i;n-1}^{-1}(s_{l,n-1}d_l^{-1})^{-1}r_{i;n-1}\} ^{-1}\\
&=&\{ r_{i;n-1}^{-1}(s_{l,n-1}d_l^{-1})^{-1}r_{i;n-1}\} \Delta (y_l(x_{i+1}x_i)\bar{y}_{i;l}^{-1})d_{n-1}^2\\
&&\{ r_{i;n-1}^{-1}(s_{l,n-1}d_l^{-1})^{-1}r_{i;n-1}\} ^{-1}\\
&\stackrel{(\text{I})}{=}& \Delta (\{ r_{i;n-1}^{-1}(s_{l,n-1}d_l^{-1})^{-1}r_{i;n-1}\} (y_l(x_{i+1}x_i)\bar{y}_{i;l}^{-1}))d_{n-1}^2\\
&=& \Delta (\{ x_i^{-2}y_l^{-1}x_i^2\} y_l(x_{i+1}x_i)\bar{y}_{i;l}^{-1}\{ x_i^{-2}y_l^{-1}x_i^2\} ^{-1})d_{n-1}^2\\
&=& \Delta (r_{i;l}(x_ix_{i+1}))d_{n-1}^2\\
&\stackrel{(\text{I})}{=}& r_{i;l}(a_{i;n-1}a_i^{-1})r_{i;l}^{-1}d_{n-1}^2.
\end{eqnarray*}
Thus $\varepsilon _r=-2$ for Relation~(D1e)$^\prime $ when $m=i$ and $k=n-1$, and we obtain Relation~(D1e) for $m=i$ and $k=n-1$ by Relations~(I) and (L+). Hence Relation~(D1e) for $m=i$ and $k=n-1$ is obtained from Relations~(I) and ($\text{I\hspace{-0.04cm}I\hspace{-0.04cm}I}$).

\begin{figure}[h]
\includegraphics[scale=0.7]{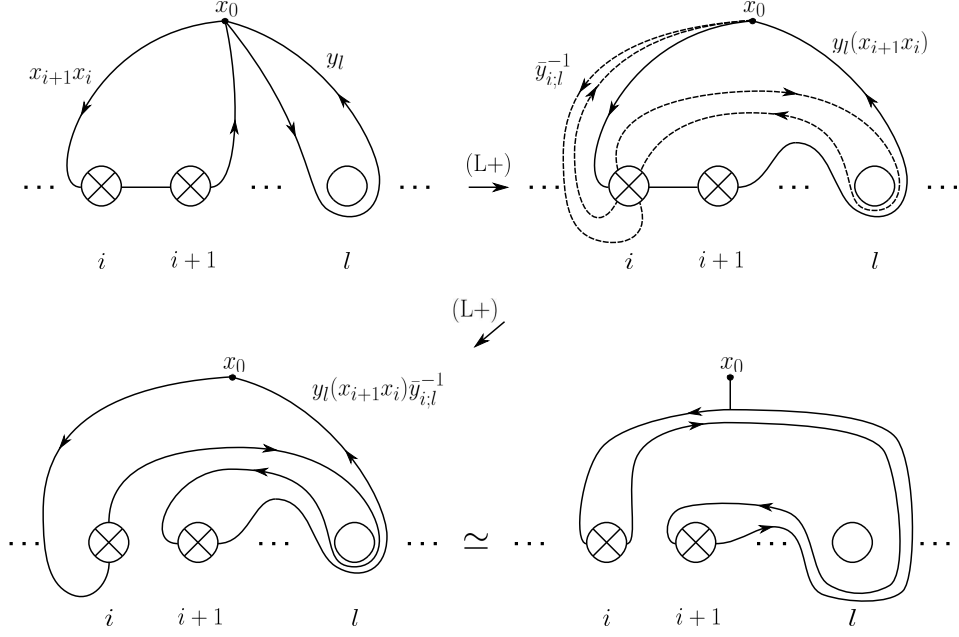}
\caption{On the upper side, we explain that the relation $\Delta (y_l)\Delta ((x_{i+1}x_i))=\Delta ((y_l(x_{i+1}x_i))d_{n-1}$ is obtained from Relation~(L+). As a similarity, the arrow from the upper right side to lower left side explain that the relation $\Delta ((y_l(x_{i+1}x_i))\Delta (\bar{y}_{i;l}^{-1})=\Delta (y_l(x_{i+1}x_i)\bar{y}_{i;l}^{-1})d_{n-1}$ is obtained from Relation~(L+). ``$\simeq $'' means a deformation of the loop by a homotopy fixing $x_0$.}\label{rel_d1e2prime_epsilon1}
\end{figure}

For Relation~(D2c) when $i=2$ and $k=n-1$, we note that $b(x_2^2)$ is represented by a loop as on the lower right side of Figure~\ref{rel_d2c2prime_epsilon5} and $\Delta (b(x_2^2))=br_{2;n-1}b^{-1}$ by the braid relation. By Figure~\ref{rel_d2c2prime_epsilon1}, we have 
\begin{eqnarray*}
&&(a_{1;n-1}a_1^{-1})(a_{3;n-1}a_3^{-1})r_{2;n-1}r_{1;n-1}(a_{1;n-1}a_1^{-1})^{-1}\\
&=&(a_{1;n-1}a_1^{-1}) \Delta (x_4x_3)\underline{\Delta (x_2^2)\Delta (x_1^2)}(a_{1;n-1}a_1^{-1})^{-1}\\
&\stackrel{(\text{L+})}{=}&(a_{1;n-1}a_1^{-1}) \underline{\Delta (x_4x_3)\Delta (x_2^2x_1^2)}(a_{1;n-1}a_1^{-1})^{-1}d_{n-1}\\
&\stackrel{(\text{L+})}{=}&(a_{1;n-1}a_1^{-1}) \Delta ((x_4x_3)x_2^2x_1^2)(a_{1;n-1}a_1^{-1})^{-1}d_{n-1}^2\\
&\stackrel{(\text{I})}{=}& \Delta ((a_{1;n-1}a_1^{-1})((x_4x_3)x_2^2x_1^2))d_{n-1}^2
\end{eqnarray*}
in $\mathcal{M}(N_{g,n})$. By Figure~\ref{rel_d2c2prime_epsilon2} and Figure~\ref{rel_d2c2prime_epsilon3}, we have the following relation in $\mathcal{M}(N_{g,n})$.
\begin{eqnarray*}
&&(a_{2;n-1}a_2^{-1})^{-1}r_{3;n-1}(a_{3;n-1}a_3^{-1})^{-1}r_{4;n-1}(a_{2;n-1}a_2^{-1})\\
&=&(a_{2;n-1}a_2^{-1})^{-1}r_{3;n-1}r_{4;n-1}r_{4;n-1}^{-1}(a_{3;n-1}a_3^{-1})^{-1}r_{4;n-1}(a_{2;n-1}a_2^{-1})\\
&=&(a_{2;n-1}a_2^{-1})^{-1}\underline{\Delta (x_3^2)\Delta (x_4^2)} \underline{r_{4;n-1}^{-1}\Delta (x_4x_3)^{-1}r_{4;n-1}}(a_{2;n-1}a_2^{-1})\\
&\stackrel{(\text{L+}),(\text{I})}{=}&(a_{2;n-1}a_2^{-1})^{-1}\underline{\Delta (x_3^2x_4^2) \Delta (r_{4;n-1}^{-1}((x_4x_3)^{-1}))}(a_{2;n-1}a_2^{-1})d_{n-1}\\
&\stackrel{(\text{L-})}{=}&(a_{2;n-1}a_2^{-1})^{-1}\Delta (x_3^2x_4^2r_{4;n-1}^{-1}((x_4x_3)^{-1}))(a_{2;n-1}a_2^{-1})\\
&\stackrel{(\text{I})}{=}&\Delta ((a_{2;n-1}a_2^{-1})^{-1}(x_3^2x_4^2r_{4;n-1}^{-1}((x_4x_3)^{-1}))). 
\end{eqnarray*}
Let $\zeta _1$ and $\zeta _2$ be simple closed curves on $N_{g,n}$ as in Figure~\ref{scc_zeta1zeta2_nonori1}. Since $\iota _\ast (t_{\zeta _1})\in \mathcal{M}^+(N_{g,n-1},x_0)$ fixes $x_2^2$ and $\Delta ((a_{1;n-1}a_1^{-1})((x_4x_3)x_1^2x_1^2))=t_{\zeta _1}t_{\zeta _2}^{-1}$, we have $\Delta ((a_{1;n-1}a_1^{-1})((x_4x_3)x_1^2x_1^2))(x_2^2)=t_{\zeta _2}^{-1}(x_2^2)$. We remark that the loop as on the upper right side of Figure~\ref{rel_d2c2prime_epsilon4} is homotopic to the loop as on the lower right side of Figure~\ref{rel_d2c2prime_epsilon4} by a homotopy fixing $x_0$ as in Figure~\ref{rel_d2c2prime_epsilon4}. By the relations above and Figure~\ref{rel_d2c2prime_epsilon5}, we have
\begin{eqnarray*}
&&\{ (a_{3;n-1}a_3^{-1})(a_{1;n-1}a_1^{-1})\} ^{-1}\underline{(a_{1;n-1}a_1^{-1})(a_{3;n-1}a_3^{-1})r_{2;n-1}r_{1;n-1}}\\
&&\underline{(a_{1;n-1}a_1^{-1})^{-1}}r_{2;k}(a_{2;n-1}a_2^{-1})^{-1}r_{3;n-1}(a_{3;n-1}a_3^{-1})^{-1}r_{4;n-1}\\
&&(a_{2;n-1}a_2^{-1})\{ (a_{3;n-1}a_3^{-1})(a_{1;n-1}a_1^{-1})\}\\
&\stackrel{(\text{L+}),(\text{I})}{=}&\{ (a_{3;n-1}a_3^{-1})(a_{1;n-1}a_1^{-1})\} ^{-1}\Delta ((a_{1;n-1}a_1^{-1})((x_4x_3)x_2^2x_1^2))r_{2;k}\\
&&\underline{(a_{2;n-1}a_2^{-1})^{-1}r_{3;n-1}(a_{3;n-1}a_3^{-1})^{-1}r_{4;n-1}(a_{2;n-1}a_2^{-1})}\\
&&\{ (a_{3;n-1}a_3^{-1})(a_{1;n-1}a_1^{-1})\} d_{n-1}^2\\
&\stackrel{(\text{L+}),(\text{L-}),(\text{I})}{=}&\{ (a_{3;n-1}a_3^{-1})(a_{1;n-1}a_1^{-1})\} ^{-1}\Delta ((a_{1;n-1}a_1^{-1})((x_4x_3)x_2^2x_1^2))r_{2;k}\\
&&\Delta ((a_{2;n-1}a_2^{-1})^{-1}(x_3^2x_4^2r_{4;n-1}^{-1}((x_4x_3)^{-1})))\{ (a_{3;n-1}a_3^{-1})(a_{1;n-1}a_1^{-1})\} \\
&&d_{n-1}^2\\
&=&\{ (a_{3;n-1}a_3^{-1})(a_{1;n-1}a_1^{-1})\} ^{-1}\underline{\Delta ((a_{1;n-1}a_1^{-1})((x_4x_3)x_2^2x_1^2))\Delta (x_2^2)}\\
&&\underline{\Delta ((a_{1;n-1}a_1^{-1})((x_4x_3)x_2^2x_1^2))^{-1}}\Delta ((a_{1;n-1}a_1^{-1})((x_4x_3)x_2^2x_1^2))\\
&&\Delta ((a_{2;n-1}a_2^{-1})^{-1}(x_3^2x_4^2r_{4;n-1}^{-1}((x_4x_3)^{-1})))\{ (a_{3;n-1}a_3^{-1})(a_{1;n-1}a_1^{-1})\} \\
&&d_{n-1}^2\\
&\stackrel{(\text{I})}{=}&\{ (a_{3;n-1}a_3^{-1})(a_{1;n-1}a_1^{-1})\} ^{-1}\Delta (t_{\zeta _2}^{-1}(x_2^2))\underline{\Delta ((a_{1;n-1}a_1^{-1})((x_4x_3)x_2^2x_1^2))}\\
&&\underline{\Delta ((a_{2;n-1}a_2^{-1})^{-1}(x_3^2x_4^2r_{4;n-1}^{-1}((x_4x_3)^{-1})))}\{ (a_{3;n-1}a_3^{-1})(a_{1;n-1}a_1^{-1})\} \\
&&d_{n-1}^2\\
&\stackrel{(\text{L-})}{=}&\{ (a_{3;n-1}a_3^{-1})(a_{1;n-1}a_1^{-1})\} ^{-1}\underline{\Delta (t_{\zeta _2}^{-1}(x_2^2))}\\
&&\underline{\Delta ((a_{1;n-1}a_1^{-1})((x_4x_3)x_2^2x_1^2)\cdot (a_{2;n-1}a_2^{-1})^{-1}(x_3^2x_4^2r_{4;n-1}^{-1}((x_4x_3)^{-1})))}\\
&&\{ (a_{3;n-1}a_3^{-1})(a_{1;n-1}a_1^{-1})\} d_{n-1}\\
&\stackrel{(\text{L-})}{=}&\{ (a_{3;n-1}a_3^{-1})(a_{1;n-1}a_1^{-1})\} ^{-1}\Delta (t_{\zeta _2}^{-1}(x_2^2)\cdot \\
&&(a_{1;n-1}a_1^{-1})((x_4x_3)x_2^2x_1^2)\cdot (a_{2;n-1}a_2^{-1})^{-1}(x_3^2x_4^2r_{4;n-1}^{-1}((x_4x_3)^{-1})))\\
&&\{ (a_{3;n-1}a_3^{-1})(a_{1;n-1}a_1^{-1})\} \\
&\stackrel{(\text{I})}{=}&\Delta (\{ (a_{3;n-1}a_3^{-1})(a_{1;n-1}a_1^{-1})\} ^{-1}(t_{\zeta _2}^{-1}(x_2^2)\cdot \\
&&(a_{1;n-1}a_1^{-1})((x_4x_3)x_2^2x_1^2)\cdot (a_{2;n-1}a_2^{-1})^{-1}(x_3^2x_4^2r_{4;n-1}^{-1}((x_4x_3)^{-1}))))\\
&=&\Delta (b(x_2^2))\\
&\stackrel{(\text{I})}{=}&br_{2;n-1}b^{-1}.
\end{eqnarray*}
Thus $\varepsilon _r=0$ for Relation~(D2c)$\prime $ when $i=2$ and $k=n-1$, and we obtain Relation~(D2c) for $i=2$ and $k=n-1$ by Relations~(I) and (I\hspace{-0.04cm}I\hspace{-0.04cm}I).

\begin{figure}[h]
\includegraphics[scale=0.7]{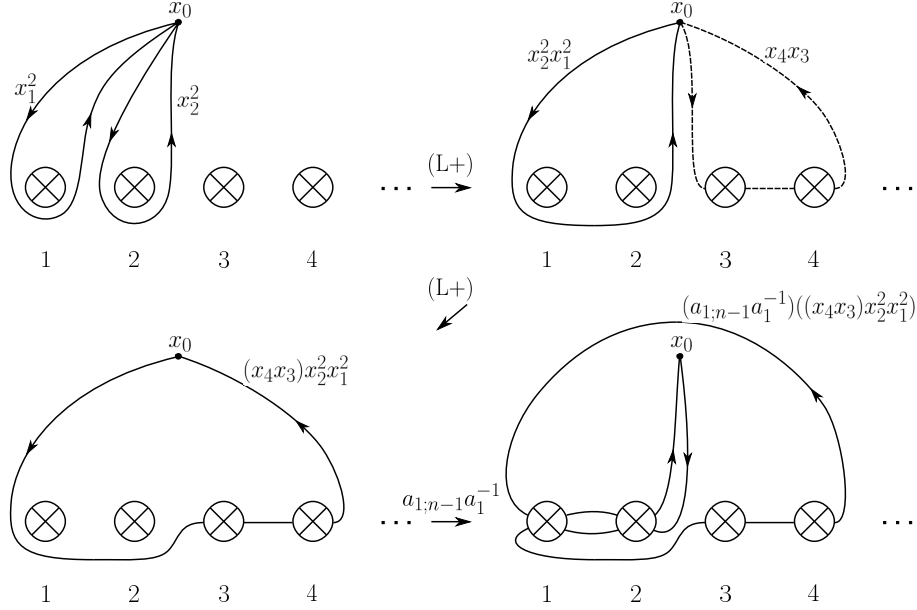}
\caption{Relations $\Delta (x_2^2)\Delta (x_1^2)=\Delta (x_2^2x_1^2)d_{n-1}$ and $\Delta (x_4x_3)\Delta (x_1^2x_2^2)=\Delta ((x_4x_3)x_2^2x_1^2)d_{n-1}$ and loop $(a_{1;n-1}a_1^{-1})((x_4x_3)x_2^2x_1^2)$.}\label{rel_d2c2prime_epsilon1}
\end{figure}

\begin{figure}[h]
\includegraphics[scale=0.7]{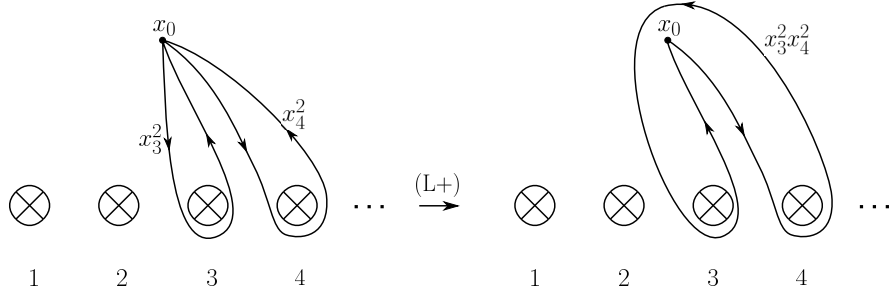}
\caption{Relation $\Delta (x_3^2)\Delta (x_4^2)=\Delta (x_3^2x_4^2)d_{n-1}$.}\label{rel_d2c2prime_epsilon2}
\end{figure}

\begin{figure}[h]
\includegraphics[scale=0.7]{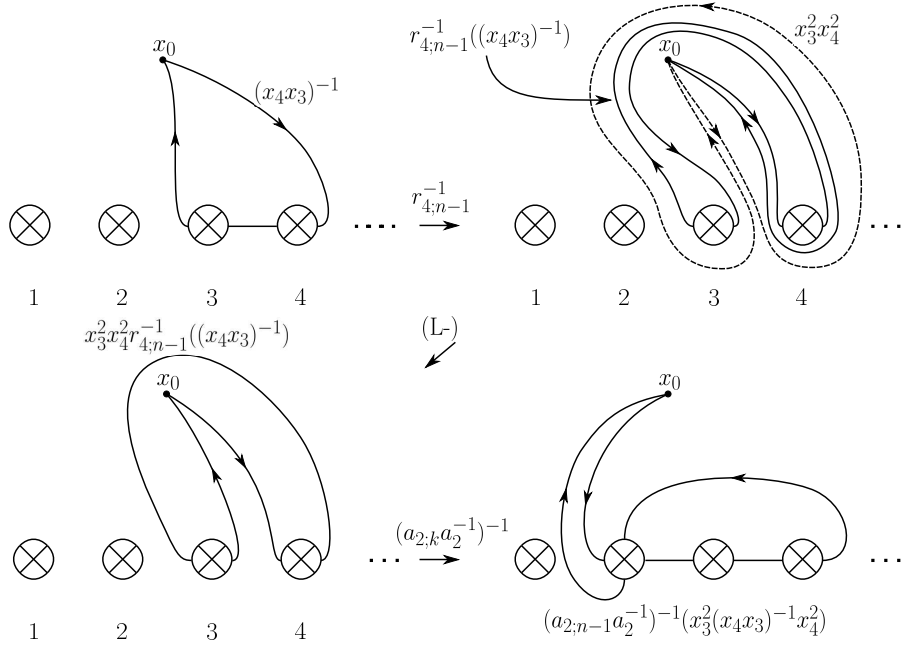}
\caption{Relation $\Delta (x_3^2x_4^2)\Delta (r_{4;n-1}^{-1}((x_4x_3)^{-1}))=\Delta (x_3^2x_4^2r_{4;n-1}^{-1}((x_4x_3)^{-1}))d_{n-1}^{-1}$ and loops $r_{4;n-1}^{-1}((x_4x_3)^{-1})$ and $(a_{2;n-1}a_2^{-1})^{-1}(x_3^2x_4^2r_{4;n-1}^{-1}((x_4x_3)^{-1}))$.}\label{rel_d2c2prime_epsilon3}
\end{figure}

\begin{figure}[h]
\includegraphics[scale=0.6]{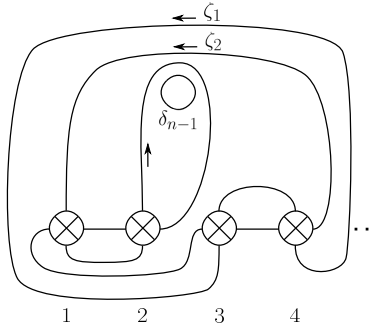}
\caption{Simple closed curves $\zeta _1$ and $\zeta _2$ on $N_{g,n}$.}\label{scc_zeta1zeta2_nonori1}
\end{figure}

\begin{figure}[h]
\includegraphics[scale=0.7]{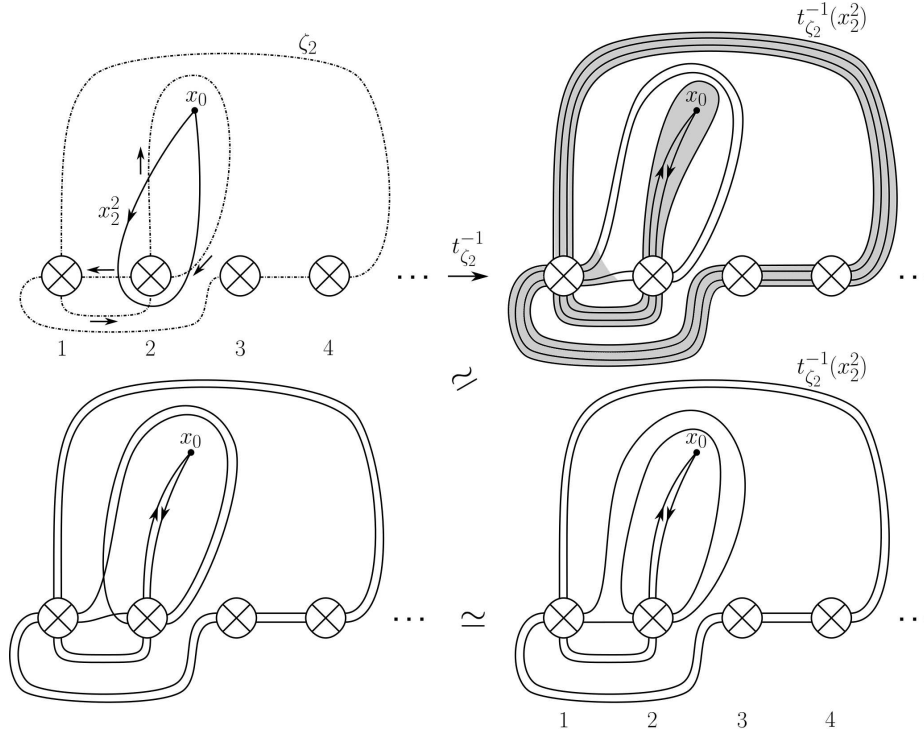}
\caption{Loop $t_{\zeta _2}^{-1}(x_2^2)$.}\label{rel_d2c2prime_epsilon4}
\end{figure}

\begin{figure}[h]
\includegraphics[scale=0.68]{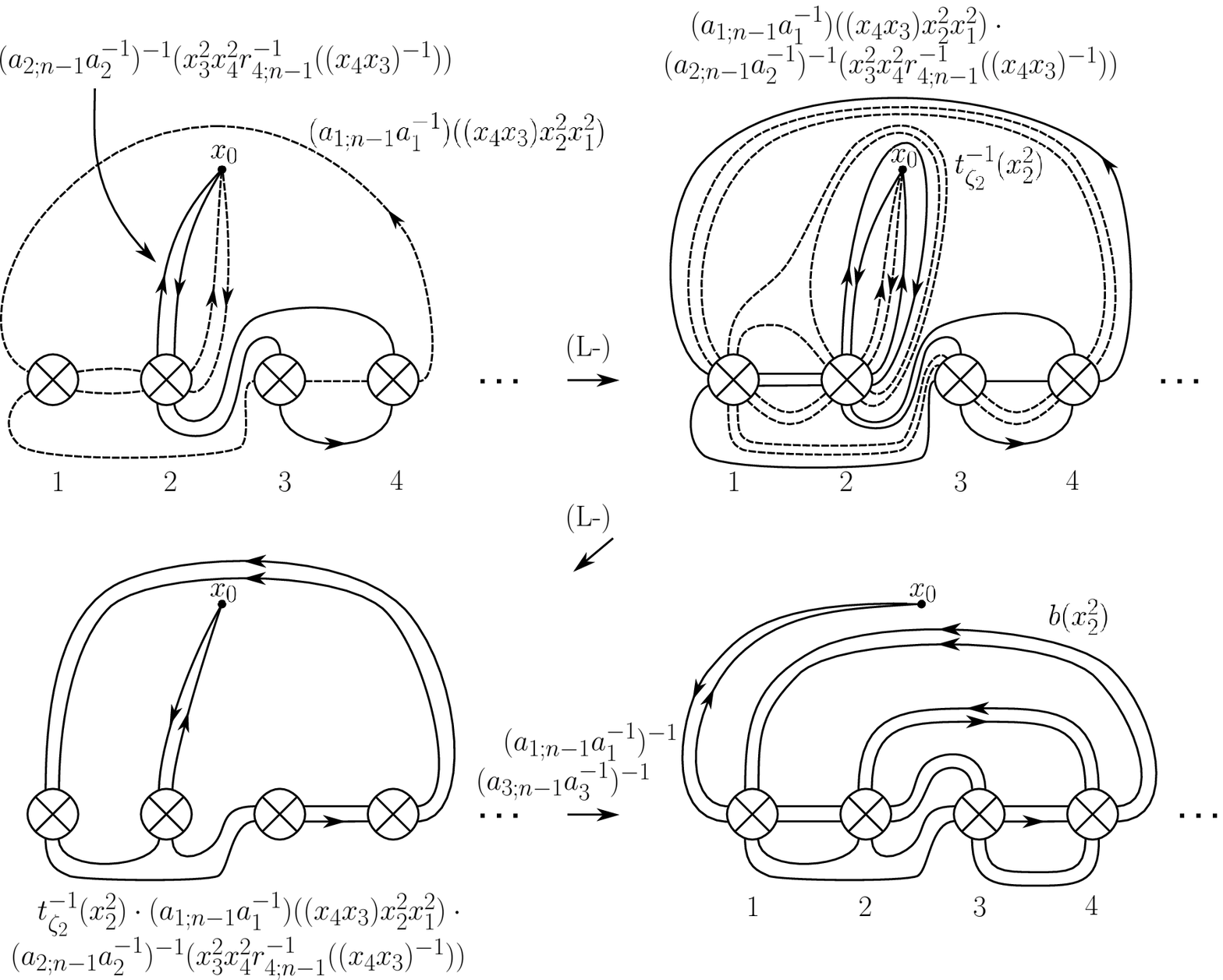}
\caption{Relations $\Delta ((a_{1;n-1}a_1^{-1})((x_4x_3)x_2^2x_1^2))\Delta ((a_{2;n-1}a_2^{-1})^{-1}$ $(x_3^2x_4^2r_{4;n-1}^{-1}((x_4x_3)^{-1})))=\Delta (a_{1;n-1}a_1^{-1})((x_4x_3)x_2^2x_1^2)\cdot $ $(a_{2;n-1}a_2^{-1})^{-1}(x_3^2x_4^2r_{4;n-1}^{-1}((x_4x_3)^{-1})))d_{n-1}^{-1}$ and $\Delta (t_{\zeta _2}^{-1}(x_2^2))\Delta (a_{1;n-1}a_1^{-1})((x_4x_3)x_2^2x_1^2)\cdot (a_{2;n-1}a_2^{-1})^{-1}(x_3^2x_4^2r_{4;n-1}^{-1}((x_4x_3)^{-1})))=$ $\Delta (t_{\zeta _2}^{-1}(x_2^2)\cdot a_{1;n-1}a_1^{-1})((x_4x_3)x_2^2x_1^2)\cdot (a_{2;n-1}a_2^{-1})^{-1}(x_3^2x_4^2r_{4;n-1}^{-1}((x_4x_3)^{-1})))d_{n-1}^{-1}$ and loop $b(x_2^2)$.}\label{rel_d2c2prime_epsilon5}
\end{figure}

For Relation~(D1d) when $m=i-1$ and $k=n-1$, we have
\begin{eqnarray*}
&&{[(a_{i-1;n-1}a_{i-1}^{-1})^{-1},(s_{l,n-1}d_l^{-1})^{-1}]}(a_{i;n-1}a_i^{-1})(s_{l,n-1}d_l^{-1})(a_{i-1;n-1}a_{i-1}^{-1}) \\
&=&\{ (s_{l,n-1}d_l^{-1})(a_{i-1;n-1}a_{i-1}^{-1})\}^{-1}\Delta (x_ix_{i-1})\underline{\Delta (y_l)\Delta (x_{i+1}x_i)}\{ (s_{l,n-1}d_l^{-1})\\
&&(a_{i-1;n-1}a_{i-1}^{-1})\}\\
&\stackrel{(\text{L+})}{=}&\{ (s_{l,n-1}d_l^{-1})(a_{i-1;n-1}a_{i-1}^{-1})\}^{-1}\underline{\Delta (x_ix_{i-1})\Delta (y_l(x_{i+1}x_i))}\{ (s_{l,n-1}d_l^{-1})\\
&&(a_{i-1;n-1}a_{i-1}^{-1})\}d_{n-1} \\
&\stackrel{(\text{L0})}{=}&\{ (s_{l,n-1}d_l^{-1})(a_{i-1;n-1}a_{i-1}^{-1})\}^{-1}\Delta ((x_ix_{i-1})y_l(x_{i+1}x_i))\{ (s_{l,n-1}d_l^{-1})\\
&&(a_{i-1;n-1}a_{i-1}^{-1})\}d_{n-1}\\
&\stackrel{(\text{I})}{=}&\Delta (\{ (s_{l,n-1}d_l^{-1})(a_{i-1;n-1}a_{i-1}^{-1})\}^{-1}((x_ix_{i-1})y_l(x_{i+1}x_i)))d_{n-1}\\
&=&\Delta (a_{i-1;l}(x_{i+1}x_i))d_{n-1}\\
&\stackrel{(\text{I})}{=}&a_{i-1;l}(a_{i;n-1}a_i^{-1})a_{i-1;l}^{-1}d_{n-1}.
\end{eqnarray*}
Thus $\varepsilon _r=-1$ for Relation~(D1d)$\prime $ when $m=i-1$ and $k=n-1$, and we obtain Relation~(D1d) when $m=i-1$ and $k=n-1$ by Relations~(I) and (I\hspace{-0.04cm}I\hspace{-0.04cm}I).

For Relation~(D1e) when $m=i+1$ and $k=n-1$, we have
\begin{eqnarray*}
&&r_{i+1;n-1}^{-1}(s_{l,n-1}d_l^{-1})^{-1}r_{i+1;n-1}(\bar{s}_{l,n-1;i+1}d_l^{-1})(a_{i;n-1}a_i^{-1}) \\
&=&\underline{r_{i+1;n-1}^{-1}\Delta (y_{l})r_{i+1;n-1}}\underline{\Delta (\bar{y}_{l;i+1})\Delta (x_{i+1}x_i)}\\
&\stackrel{(\text{I}),(\text{L-})}{=}&\Delta (r_{i+1;n-1}^{-1}(y_{l}))\Delta (\bar{y}_{l;i+1}(x_{i+1}x_i))d_{n-1}^{-1}\\
&\stackrel{(\text{L-})}{=}&\Delta (r_{i+1;n-1}^{-1}(y_{l})\bar{y}_{l;i+1}(x_{i+1}x_i))d_{n-1}^{-2}\\
&=&\Delta (r_{i+1;l}(x_{i+1}x_i))d_{n-1}^{-2}\\
&\stackrel{(\text{I})}{=}&r_{i+1;l}(a_{i;n-1}a_i^{-1})r_{i+1;l}^{-1}d_{n-1}^{-2}.
\end{eqnarray*}
Thus $\varepsilon _r=-2$ for Relation~(D1e)$\prime $ when $m=i+1$ and $k=n-1$, and we obtain Relation~(D1e) when $m=i+1$ and $k=n-1$ by Relations~(I) and (I\hspace{-0.04cm}I\hspace{-0.04cm}I).

For Relation~(D1g) when $i=1$ and $k=n-1$, we have
\begin{eqnarray*}
&&[(\bar{s}_{l,k}d_l^{-1})^{-1},s_{t,k}d_t^{-1}]^{-1}(s_{l,k}d_l^{-1})(a_{1;k}a_1^{-1})r_{1;k}(\bar{s}_{t,k}d_t^{-1})r_{1;k}^{-1}(s_{l,k}d_l^{-1})^{-1}\\
&&r_{1;k}(\bar{s}_{t,k}d_t^{-1})^{-1}r_{1;k}^{-1}[(\bar{s}_{l,k}d_l^{-1})^{-1},(s_{t,k}d_t^{-1})]\\
&=&[(\bar{s}_{l,k}d_l^{-1})^{-1},s_{t,k}d_t^{-1}]^{-1}\underline{\Delta (y_l)\Delta (x_2x_1)}\underline{r_{1;k}\Delta (\bar{y}_t)r_{1;k}^{-1}}\Delta (y_l)^{-1}\\
&&\underline{r_{1;k}\Delta (\bar{y}_t)^{-1}r_{1;k}^{-1}}[(\bar{s}_{l,k}d_l^{-1})^{-1},(s_{t,k}d_t^{-1})]\\
&\stackrel{(\text{L+}),(\text{I})}{=}&[(\bar{s}_{l,k}d_l^{-1})^{-1},s_{t,k}d_t^{-1}]^{-1}\Delta (y_l(x_2x_1))\underline{\Delta (r_{1;k}(\bar{y}_t))\Delta (y_l)^{-1}\Delta (r_{1;k}(\bar{y}_t))^{-1}}\\
&&[(\bar{s}_{l,k}d_l^{-1})^{-1},(s_{t,k}d_t^{-1})]d_{n-1}\\
&\stackrel{(\text{I})}{=}&[(\bar{s}_{l,k}d_l^{-1})^{-1},s_{t,k}d_t^{-1}]^{-1}\underline{\Delta (y_l(x_2x_1))\Delta (\Delta (r_{1;k}(\bar{y}_t))(y_l))^{-1}}\\
&&[(\bar{s}_{l,k}d_l^{-1})^{-1},(s_{t,k}d_t^{-1})]d_{n-1}\\
&\stackrel{(\text{L-})}{=}&[(\bar{s}_{l,k}d_l^{-1})^{-1},s_{t,k}d_t^{-1}]^{-1}\Delta (y_l(x_2x_1)\Delta (r_{1;k}(\bar{y}_t))(y_l)^{-1})\\
&&[(\bar{s}_{l,k}d_l^{-1})^{-1},(s_{t,k}d_t^{-1})]\\
&\stackrel{(\text{I})}{=}&\Delta ([(\bar{s}_{l,k}d_l^{-1})^{-1},s_{t,k}d_t^{-1}]^{-1}(y_l(x_2x_1)\Delta (r_{1;k}(\bar{y}_t))(y_l)^{-1}))\\
&=&\Delta (\bar{s}_{l;t}(x_{i+1}x_i))\\
&\stackrel{(\text{I})}{=}&\bar{s}_{l;t}(a_{i;n-1}a_i^{-1})\bar{s}_{l;t}^{-1}.
\end{eqnarray*}
Thus $\varepsilon _r=0$ for Relation~(D1g)$\prime $ when $i=1$ and $k=n-1$, and we obtain Relation~(D1g) when $i=1$ and $k=n-1$ by Relations~(I) and (I\hspace{-0.04cm}I\hspace{-0.04cm}I).

For Relation~(D2c) when $i=1$ and $k=n-1$, we have
\begin{eqnarray*}
&&(a_{1;n-1}a_1^{-1})^{-1}(a_{3;n-1}a_3^{-1})^{-1}(a_{2;n-1}a_2^{-1})^{-1}r_{4;n-1}^{-1}(a_{3;n-1}a_3^{-1})\\
&&r_{3;n-1}^{-1}(a_{2;n-1}a_2^{-1})r_{2;n-1}^{-1}(a_{1;n-1}a_1^{-1})\\
&=&(a_{1;n-1}a_1^{-1})^{-1}(a_{3;n-1}a_3^{-1})^{-1}\underline{\Delta (x_3x_2)^{-1}\Delta (x_4^2)^{-1}}(a_{3;n-1}a_3^{-1})\\
&&\underline{r_{3;n-1}^{-1}\Delta (x_3x_2)r_{3;n-1}}\underline{\Delta (x_3^2)^{-1}\Delta (x_2^2)^{-1}}(a_{1;n-1}a_1^{-1})\\
&\stackrel{(\text{L-}),(\text{I})}{=}&(a_{1;n-1}a_1^{-1})^{-1}\underline{(a_{3;n-1}a_3^{-1})^{-1}\Delta ((x_3x_2)^{-1}x_4^{-2})(a_{3;n-1}a_3^{-1})}\\
&&\underline{\Delta (r_{3;n-1}^{-1}(x_3x_2))\Delta (x_3^{-2}x_2^{-2})}(a_{1;n-1}a_1^{-1})d_{n-1}^{-2}\\
&\stackrel{(\text{I}),(\text{L+})}{=}&(a_{1;n-1}a_1^{-1})^{-1}\underline{\Delta ((a_{3;n-1}a_3^{-1})^{-1}((x_3x_2)^{-1}x_4^{-2}))\Delta (r_{3;n-1}^{-1}(x_3x_2)x_3^{-2}x_2^{-2})}\\
&&(a_{1;n-1}a_1^{-1})d_{n-1}^{-1}\\
&\stackrel{(\text{L+})}{=}&(a_{1;n-1}a_1^{-1})^{-1}\Delta ((a_{3;n-1}a_3^{-1})^{-1}((x_3x_2)^{-1}x_4^{-2})r_{3;n-1}^{-1}(x_3x_2)x_3^{-2}x_2^{-2})\\
&&(a_{1;n-1}a_1^{-1})\\
&\stackrel{(\text{I})}{=}&\Delta ((a_{1;n-1}a_1^{-1})^{-1}((a_{3;n-1}a_3^{-1})^{-1}((x_3x_2)^{-1}x_4^{-2})r_{3;n-1}^{-1}(x_3x_2)x_3^{-2}x_2^{-2}))\\
&=&\Delta (b(x_1^2))\\
&\stackrel{(\text{I})}{=}&b(r_{1;n-1})b^{-1}.
\end{eqnarray*}
Thus $\varepsilon _r=0$ for Relation~(D2c)$\prime $ when $i=1$ and $k=n-1$, and we obtain Relation~(D2c) when $i=1$ and $k=n-1$ by Relations~(I) and (I\hspace{-0.04cm}I\hspace{-0.04cm}I).

For Relation~(D2c) when $i=3$ and $k=n-1$, we have
\begin{eqnarray*}
&&\{ (a_{3;n-1}a_3^{-1})(a_{1;n-1}a_1^{-1})\} ^{-1}r_{4;n-1}^{-1}(a_{3;n-1}a_3^{-1})r_{3;n-1}^{-1}(a_{2;n-1}a_2^{-1})\\
&&r_{2;n-1}^{-1}(a_{1;n-1}a_1^{-1})r_{1;n-1}^{-1}(a_{2;n-1}a_2^{-1})^{-1}r_{3;n-1}(a_{3;n-1}a_3^{-1})^{-1}\\
&&(a_{1;n-1}a_1^{-1})^{-1}\{ (a_{3;n-1}a_3^{-1})(a_{1;n-1}a_1^{-1})\}\\
&=&\{ (a_{3;n-1}a_3^{-1})(a_{1;n-1}a_1^{-1})\} ^{-1}r_{4;n-1}^{-1}(a_{3;n-1}a_3^{-1})r_{3;n-1}^{-1}(a_{2;n-1}a_2^{-1})\\
&&\underline{r_{2;n-1}^{-1}\Delta (x_2x_1)r_{2;n-1}}\underline{\Delta (x_2^2)^{-1}\Delta (x_1^2)^{-1}}(a_{2;n-1}a_2^{-1})^{-1}r_{3;n-1}(a_{3;n-1}a_3^{-1})^{-1}\\
&&\Delta (x_2x_1)^{-1}\{ (a_{3;n-1}a_3^{-1})(a_{1;n-1}a_1^{-1})\}\\
&\stackrel{(\text{I}),(\text{L-})}{=}&\{ (a_{3;n-1}a_3^{-1})(a_{1;n-1}a_1^{-1})\} ^{-1}r_{4;n-1}^{-1}(a_{3;n-1}a_3^{-1})r_{3;n-1}^{-1}(a_{2;n-1}a_2^{-1})\\
&&\underline{\Delta (r_{2;n-1}^{-1}(x_2x_1))\Delta (x_2^{-2}x_1^{-2})}(a_{2;n-1}a_2^{-1})^{-1}r_{3;n-1}(a_{3;n-1}a_3^{-1})^{-1}\\
&&\Delta (x_2x_1)^{-1}\{ (a_{3;n-1}a_3^{-1})(a_{1;n-1}a_1^{-1})\}d_{n-1}^{-1}\\
&\stackrel{(\text{L+})}{=}&\{ (a_{3;n-1}a_3^{-1})(a_{1;n-1}a_1^{-1})\} ^{-1}r_{4;n-1}^{-1}\underline{(a_{3;n-1}a_3^{-1})r_{3;n-1}^{-1}(a_{2;n-1}a_2^{-1})}\\
&&\underline{\Delta (r_{2;n-1}^{-1}(x_2x_1)x_2^{-2}x_1^{-2})(a_{2;n-1}a_2^{-1})^{-1}r_{3;n-1}(a_{3;n-1}a_3^{-1})^{-1}}\\
&&\Delta (x_2x_1)^{-1}\{ (a_{3;n-1}a_3^{-1})(a_{1;n-1}a_1^{-1})\} \\
&\stackrel{(\text{I})}{=}&\{ (a_{3;n-1}a_3^{-1})(a_{1;n-1}a_1^{-1})\} ^{-1}\\
&&\underline{r_{4;n-1}^{-1}\Delta (\{ (a_{3;n-1}a_3^{-1})r_{3;n-1}^{-1}(a_{2;n-1}a_2^{-1})\} (r_{2;n-1}^{-1}(x_2x_1)x_2^{-2}x_1^{-2}))r_{4;n-1}}\\
&&\underline{\Delta (x_4^2)^{-1}\Delta (x_2x_1)^{-1}}\{ (a_{3;n-1}a_3^{-1})(a_{1;n-1}a_1^{-1})\} \\
&\stackrel{(\text{I}),(\text{L-})}{=}&\{ (a_{3;n-1}a_3^{-1})(a_{1;n-1}a_1^{-1})\} ^{-1}\\
&&\underline{\Delta (\{ r_{4;n-1}^{-1}(a_{3;n-1}a_3^{-1})r_{3;n-1}^{-1}(a_{2;n-1}a_2^{-1})\} (r_{2;n-1}^{-1}(x_2x_1)x_2^{-2}x_1^{-2}))}\\
&&\underline{\Delta (x_4^{-2}(x_2x_1)^{-1})}\{ (a_{3;n-1}a_3^{-1})(a_{1;n-1}a_1^{-1})\} \\
&\stackrel{(\text{L+})}{=}&\{ (a_{3;n-1}a_3^{-1})(a_{1;n-1}a_1^{-1})\} ^{-1}\\
&&\Delta (\{ r_{4;n-1}^{-1}(a_{3;n-1}a_3^{-1})r_{3;n-1}^{-1}(a_{2;n-1}a_2^{-1})\} (r_{2;n-1}^{-1}(x_2x_1)x_2^{-2}x_1^{-2})x_4^{-2}\\
&&(x_2x_1)^{-1})\{ (a_{3;n-1}a_3^{-1})(a_{1;n-1}a_1^{-1})\} \\
&\stackrel{(\text{I})}{=}&\Delta (\{ (a_{3;n-1}a_3^{-1})(a_{1;n-1}a_1^{-1})\} ^{-1}(\{ r_{4;n-1}^{-1}(a_{3;n-1}a_3^{-1})r_{3;n-1}^{-1}(a_{2;n-1}a_2^{-1})\} \\
&&(r_{2;n-1}^{-1}(x_2x_1)x_2^{-2}x_1^{-2})x_4^{-2}(x_2x_1)^{-1}))\\
&=&\Delta (b(x_3^2))\\
&\stackrel{(\text{I})}{=}&b(r_{3;n-1})b^{-1}.
\end{eqnarray*}
Thus $\varepsilon _r=0$ for Relation~(D2c)$\prime $ when $i=3$ and $k=n-1$, and we obtain Relation~(D2c) when $i=3$ and $k=n-1$ by Relations~(I) and (I\hspace{-0.04cm}I\hspace{-0.04cm}I).

For Relation~(D2c) when $i=4$ and $k=n-1$, we have
\begin{eqnarray*}
&&r_{4;n-1}(a_{2;n-1}a_2^{-1})r_{1;n-1}(a_{1;n-1}a_1^{-1})^{-1}r_{2;n-1}\\
&&(a_{2;n-1}a_2^{-1})^{-1}r_{3;n-1}(a_{3;n-1}a_3^{-1})^{-1}r_{4;n-1}(a_{3;n-1}a_3^{-1})(a_{1;n-1}a_1^{-1})\\
&=&\Delta (x_4^2)\Delta (x_3x_2)\Delta (x_1^2)\underline{(a_{1;n-1}a_1^{-1})^{-1}\Delta (x_2^2)(a_{1;n-1}a_1^{-1})}\Delta (x_2x_1)^{-1}\\
&&\underline{(a_{2;n-1}a_2^{-1})^{-1}\Delta (x_3^2)(a_{2;n-1}a_2^{-1})}\ \underline{\Delta (x_3x_2)^{-1}\Delta (x_4x_3)^{-1}}\\
&&r_{4;n-1}\Delta (x_4x_3)\Delta (x_2x_1)\\
&\stackrel{(\text{I}),(\text{L0})}{=}&\underline{\Delta (x_4^2)\Delta (x_3x_2)\Delta (x_1^2)}\Delta ((a_{1;n-1}a_1^{-1})^{-1}(x_2^2))\Delta (x_2x_1)^{-1}\\
&&\Delta ((a_{2;n-1}a_2^{-1})^{-1}(x_3^2)) \Delta ((x_3x_2)^{-1}(x_4x_3)^{-1})r_{4;n-1}\underline{\Delta (x_4x_3)\Delta (x_2x_1)}\\
&\stackrel{(\text{L+})}{=}&\Delta (x_4^2(x_3x_2)x_1^2)\Delta ((a_{1;n-1}a_1^{-1})^{-1}(x_2^2))\Delta (x_2x_1)^{-1}\\
&&\underline{\Delta ((a_{2;n-1}a_2^{-1})^{-1}(x_3^2)) \Delta ((x_3x_2)^{-1}(x_4x_3)^{-1})}r_{4;n-1}\Delta (x_4x_3x_2x_1)d_{n-1}^3\\
&\stackrel{(\text{L-})}{=}&\Delta (x_4^2(x_3x_2)x_1^2)\Delta ((a_{1;n-1}a_1^{-1})^{-1}(x_2^2))\underline{\Delta (x_2x_1)^{-1}}\\
&&\underline{\Delta ((a_{2;n-1}a_2^{-1})^{-1}(x_3^2)(x_3x_2)^{-1}(x_4x_3)^{-1})}r_{4;n-1}\Delta (x_4x_3x_2x_1)d_{n-1}^2\\
&\stackrel{(\text{L0})}{=}&\Delta (x_4^2(x_3x_2)x_1^2)\underline{\Delta ((a_{1;n-1}a_1^{-1})^{-1}(x_2^2))\Delta ((x_2x_1)^{-1}}\\
&&\underline{(a_{2;n-1}a_2^{-1})^{-1}(x_3^2)(x_3x_2)^{-1}(x_4x_3)^{-1})}r_{4;n-1}\Delta (x_4x_3x_2x_1)d_{n-1}^2\\
&\stackrel{(\text{L-})}{=}&\underline{\Delta (x_4^2(x_3x_2)x_1^2)\Delta ((a_{1;n-1}a_1^{-1})^{-1}(x_2^2)(x_2x_1)^{-1}}\\
&&\underline{(a_{2;n-1}a_2^{-1})^{-1}(x_3^2)(x_3x_2)^{-1}(x_4x_3)^{-1})}r_{4;n-1}\Delta (x_4x_3x_2x_1)d_{n-1}\\
&\stackrel{(\text{L-})}{=}&\underline{\Delta (x_4^2(x_3x_2)x_1^2(a_{1;n-1}a_1^{-1})^{-1}(x_2^2)(x_2x_1)^{-1}(a_{2;n-1}a_2^{-1})^{-1}(x_3^2)(x_3x_2)^{-1}}\\
&&\underline{(x_4x_3)^{-1})\Delta (x_4x_3x_2x_1)}\ \underline{\Delta (x_4x_3x_2x_1)^{-1}\Delta (x_4^2)\Delta (x_4x_3x_2x_1)}\\
&\stackrel{(\text{L+}),(\text{I})}{=}&\underline{\Delta (x_4^2(x_3x_2)x_1^2(a_{1;n-1}a_1^{-1})^{-1}(x_2^2)(x_2x_1)^{-1}(a_{2;n-1}a_2^{-1})^{-1}(x_3^2)(x_3x_2)^{-1}}\\
&&\underline{(x_4x_3)^{-1}x_4x_3x_2x_1)\Delta (\Delta (x_4x_3x_2x_1)^{-1}(x_4^2))}d_{n-1}\\
&\stackrel{(\text{L-})}{=}&\Delta (x_4^2(x_3x_2)x_1^2(a_{1;n-1}a_1^{-1})^{-1}(x_2^2)(x_2x_1)^{-1}(a_{2;n-1}a_2^{-1})^{-1}(x_3^2)(x_3x_2)^{-1}\\
&&(x_4x_3)^{-1}x_4x_3x_2x_1\Delta (x_4x_3x_2x_1)^{-1}(x_4^2))\\
&=&\Delta (b(x_4^2))\\
&\stackrel{(\text{I})}{=}&b(r_{4;n-1})b^{-1}.
\end{eqnarray*}
Thus $\varepsilon _r=0$ for Relation~(D2c)$\prime $ when $i=4$ and $k=n-1$, and we obtain Relation~(D2c) when $i=4$ and $k=n-1$ by Relations~(I) and (I\hspace{-0.04cm}I\hspace{-0.04cm}I).

For Relation~(D2d) when $m=i-1$ and $k=n-1$, we have
\begin{eqnarray*}
&&\{ (s_{l,n-1}d_l^{-1})(a_{i-1;n-1}a_{i-1}^{-1})\} ^{-1}(a_{i-1;n-1}a_{i-1}^{-1})(s_{l,n-1}d_l^{-1})r_{i;n-1}r_{i-1;n-1}\\
&&(a_{i-1;n-1}a_{i-1}^{-1})^{-1}r_{i;n-1}(\bar{s}_{l,n-1;i}d_l^{-1})\{ (s_{l,n-1}d_l^{-1})(a_{i-1;n-1}a_{i-1}^{-1})\} \\
&=&\{ (s_{l,n-1}d_l^{-1})(a_{i-1;n-1}a_{i-1}^{-1})\} ^{-1}(a_{i-1;n-1}a_{i-1}^{-1})\underline{\Delta (y_l)\Delta (x_i^2)\Delta (x_{i-1}^2)}\\
&&(a_{i-1;n-1}a_{i-1}^{-1})^{-1}\Delta (x_i^2)(\bar{s}_{l,n-1;i}d_l^{-1})\{ (s_{l,n-1}d_l^{-1})(a_{i-1;n-1}a_{i-1}^{-1})\} \\
&\stackrel{(\text{L+})}{=}&\{ (s_{l,n-1}d_l^{-1})(a_{i-1;n-1}a_{i-1}^{-1})\} ^{-1}\underline{(a_{i-1;n-1}a_{i-1}^{-1})\Delta (y_lx_i^2x_{i-1}^2)(a_{i-1;n-1}a_{i-1}^{-1})^{-1}}\\
&\stackrel{(\text{I})}{=}&\{ (s_{l,n-1}d_l^{-1})(a_{i-1;n-1}a_{i-1}^{-1})\} ^{-1}\Delta ((a_{i-1;n-1}a_{i-1}^{-1})(y_lx_i^2x_{i-1}^2))\Delta (\bar{y}_{l;i})\\
&&\underline{(\bar{s}_{l,n-1;i}d_l^{-1})^{-1}\Delta (x_i^2)(\bar{s}_{l,n-1;i}d_l^{-1})}\{ (s_{l,n-1}d_l^{-1})(a_{i-1;n-1}a_{i-1}^{-1})\} d_{n-1}^2\\
&\stackrel{(\text{I})}{=}&\{ (s_{l,n-1}d_l^{-1})(a_{i-1;n-1}a_{i-1}^{-1})\} ^{-1}\underline{\Delta ((a_{i-1;n-1}a_{i-1}^{-1})(y_lx_i^2x_{i-1}^2))\Delta (\bar{y}_{l;i})}\\
&&\Delta ((\bar{s}_{l,n-1;i}d_l^{-1})^{-1}(x_i^2))\{ (s_{l,n-1}d_l^{-1})(a_{i-1;n-1}a_{i-1}^{-1})\} d_{n-1}^2\\
&\stackrel{(\text{L-})}{=}&\{ (s_{l,n-1}d_l^{-1})(a_{i-1;n-1}a_{i-1}^{-1})\} ^{-1}\underline{\Delta ((a_{i-1;n-1}a_{i-1}^{-1})(y_lx_i^2x_{i-1}^2)\bar{y}_{l;i})}\\
&&\underline{\Delta ((\bar{s}_{l,n-1;i}d_l^{-1})^{-1}(x_i^2))}\{ (s_{l,n-1}d_l^{-1})(a_{i-1;n-1}a_{i-1}^{-1})\} d_{n-1}\\
&\stackrel{(\text{L-})}{=}&\{ (s_{l,n-1}d_l^{-1})(a_{i-1;n-1}a_{i-1}^{-1})\} ^{-1}\Delta ((a_{i-1;n-1}a_{i-1}^{-1})(y_lx_i^2x_{i-1}^2)\bar{y}_{l;i}\\
&&(\bar{s}_{l,n-1;i}d_l^{-1})^{-1}(x_i^2))\{ (s_{l,n-1}d_l^{-1})(a_{i-1;n-1}a_{i-1}^{-1})\} \\
&\stackrel{(\text{I})}{=}&\Delta (\{ (s_{l,n-1}d_l^{-1})(a_{i-1;n-1}a_{i-1}^{-1})\} ^{-1}((a_{i-1;n-1}a_{i-1}^{-1})(y_lx_i^2x_{i-1}^2)\bar{y}_{l;i}\\
&&(\bar{s}_{l,n-1;i}d_l^{-1})^{-1}(x_i^2)))\\
&=&\Delta (a_{i-1;n-1}(x_i^2))\\
&\stackrel{(\text{I})}{=}&a_{i-1;n-1}(r_{i;n-1})a_{i-1;n-1}^{-1}.
\end{eqnarray*}
Thus $\varepsilon _r=0$ for Relation~(D2d)$\prime $ when $m=i-1$ and $k=n-1$, and we obtain Relation~(D2d) when $m=i-1$ and $k=n-1$ by Relations~(I) and (I\hspace{-0.04cm}I\hspace{-0.04cm}I).

For Relation~(D2g) when $i=1$ and $k=n-1$, we have
\begin{eqnarray*}
&&[s_{t,n-1}d_t^{-1},(\bar{s}_{l,n-1}d_l^{-1})^{-1}][r_{1;n-1}(\bar{s}_{t,n-1}d_t^{-1})r_{1;n-1}^{-1},(s_{l,n-1}d_l^{-1})^{-1}]r_{1;n-1} \\
&=&\underline{(s_{t,n-1}d_t^{-1})\Delta (\bar{y}_l)^{-1}(s_{t,n-1}d_t^{-1})^{-1}}\Delta (\bar{y}_l)\Delta (x_1^2)(\bar{s}_{t,n-1}d_t^{-1})\\
&&\underline{r_{1;n-1}^{-1}(s_{l,n-1}d_l^{-1})^{-1}r_{1;n-1}\Delta (\bar{y}_t)^{-1}r_{1;n-1}^{-1}(s_{l,n-1}d_l^{-1})r_{1;n-1}}\\
&\stackrel{(\text{I})}{=}&\Delta ((s_{t,n-1}d_t^{-1})(\bar{y}_l))^{-1}\Delta (\bar{y}_l)\Delta (\bar{y_t})\underline{(\bar{s}_{t,n-1}d_t^{-1})^{-1}\Delta (x_1^2)(\bar{s}_{t,n-1}d_t^{-1})}\\
&&\Delta (r_{1;n-1}^{-1}(s_{l,n-1}d_l^{-1})^{-1}r_{1;n-1}(\bar{y}_t))^{-1}\\
&\stackrel{(\text{I})}{=}&\Delta ((s_{t,n-1}d_t^{-1})(\bar{y}_l))^{-1}\underline{\Delta (\bar{y}_l)\Delta (\bar{y_t})}\Delta ((\bar{s}_{t,n-1}d_t^{-1})^{-1}(x_1^2))\\
&&\Delta (r_{1;n-1}^{-1}(s_{l,n-1}d_l^{-1})^{-1}r_{1;n-1}(\bar{y}_t))^{-1}\\
&\stackrel{(\text{L-})}{=}&\Delta ((s_{t,n-1}d_t^{-1})(\bar{y}_l))^{-1}\underline{\Delta (\bar{y}_l\bar{y_t})\Delta ((\bar{s}_{t,n-1}d_t^{-1})^{-1}(x_1^2))}\\
&&\Delta (r_{1;n-1}^{-1}(s_{l,n-1}d_l^{-1})^{-1}r_{1;n-1}(\bar{y}_t))^{-1}d_{n-1}^{-1}\\
&\stackrel{(\text{L-})}{=}&\underline{\Delta ((s_{t,n-1}d_t^{-1})(\bar{y}_l))^{-1}\Delta (\bar{y}_l\bar{y_t}(\bar{s}_{t,n-1}d_t^{-1})^{-1}(x_1^2))}\\
&&\Delta (r_{1;n-1}^{-1}(s_{l,n-1}d_l^{-1})^{-1}r_{1;n-1}(\bar{y}_t))^{-1}d_{n-1}^{-2}\\
&\stackrel{(\text{L+})}{=}&\underline{\Delta ((s_{t,n-1}d_t^{-1})(\bar{y}_l)^{-1}\bar{y}_l\bar{y_t}(\bar{s}_{t,n-1}d_t^{-1})^{-1}(x_1^2))}\\
&&\underline{\Delta (r_{1;n-1}^{-1}(s_{l,n-1}d_l^{-1})^{-1}r_{1;n-1}(\bar{y}_t))^{-1}}d_{n-1}^{-1}\\
&\stackrel{(\text{L+})}{=}&\Delta ((s_{t,n-1}d_t^{-1})(\bar{y}_l)^{-1}\bar{y}_l\bar{y_t}(\bar{s}_{t,n-1}d_t^{-1})^{-1}(x_1^2)r_{1;n-1}^{-1}(s_{l,n-1}d_l^{-1})^{-1}r_{1;n-1}(\bar{y}_t)^{-1})\\
&=&\Delta (\bar{s}_{l,t}(x_1^2))\\
&\stackrel{(\text{I})}{=}&\bar{s}_{l,t}(r_{1;n-1})\bar{s}_{l,t}^{-1}.
\end{eqnarray*}
Thus $\varepsilon _r=0$ for Relation~(D2g)$\prime $ when $i=1$ and $k=n-1$, and we obtain Relation~(D2g) when $i=1$ and $k=n-1$ by Relations~(I) and (I\hspace{-0.04cm}I\hspace{-0.04cm}I).

\par
{\bf Acknowledgement: } The authors would like to express his gratitude to Hisaaki Endo, for his encouragement and helpful advices. 
The second author was supported by JSPS KAKENHI Grant number 15J10066.


\begin{thebibliography}{99}
\bibitem{Birman}
J. S. Birman, \emph{Mapping class groups and their relationship to braid groups}, Comm. Pure Appl. Math. \textbf{22} (1969), 213--238.

\bibitem{Birman-Chillingworth} 
J. S. Birman, D. R. J. Chillingworth, \emph{On the homeotopy group of a non-orientable surface}, Proc. Camb. Philos. Soc. \textbf{71} (1972), 437--448.

\bibitem{Epstein} 
D. B. A. Epstein, \emph{Curves on 2-manifolds and isotopies}, Acta Math. \textbf{115} (1966), 83-107.


\bibitem{Gervais}
S. Gervais, \emph{Presentation and central extensions of mapping class groups}, Trans. Amer. Math. Soc. \textbf{348} (1996), 3097--3132.

\bibitem{Gervais2}
S. Gervais, \emph{A finite presentation of the mapping class group of a punctured surface}, Topology \textbf{40} (2001), no. 4, 703--725.
 
\bibitem{Harer} 
L. Harer, \emph{The second homology group of the mapping class groups of orientable surfaces}, Invent. Math. 72, 221 239 (1983) 
 
\bibitem{Hatcher-Thurston}
A. Hatcher, W. Thurston, \emph{A presentation for the mapping class group of a closed orientable surface}, Top. \textbf{19} (1980), 221--237. 

\bibitem{Johnson1}
D. L. Johnson, \emph{The structure of the Torelli group. I. A finite set of generators for $\mathcal{I}$}, Ann. of Math. (2) \textbf{118} (1983), no. 3, 423--442. 

\bibitem{Johnson2}
D. L. Johnson, \emph{Presentations of Groups}, London Math. Soc. Stud. Texts \textbf{15} (1990). 
 

\bibitem{Korkmaz2}
M. Korkmaz, \emph{Mapping class groups of nonorientable surfaces}, Geom. Dedicata. \textbf{89} (2002), 109--133.

 
\bibitem{Labruere-Paris} 
C. Labru\`ere and L. Paris, \emph{Presentations for the punctured mapping class groups in terms of Artin groups}, Algebr. Geom. Topol. \textbf{1} (2001), 73--114. 
 
\bibitem{Lickorish1} 
W. B. R. Lickorish, \emph{Homeomorphisms of non-orientable two-manifolds}, Proc. Camb. Philos. Soc. \textbf{59} (1963), 307--317.
 
\bibitem{Lickorish2} 
W. B. R. Lickorish, \emph{On the homeomorphisms of a non-orientable surface}, Proc. Camb. Philos. Soc. \textbf{61} (1965), 61--64.

\bibitem{Luo} 
F. Luo, \emph{A presentation of the mapping class groups}, Math. Res. Lett. \textbf{4} (1997), 735--739. 


\bibitem{Omori}
G. Omori, \emph{An infinite presentation for the mapping class group of a non-orientable surface}.

\bibitem{Paris-Szepietowski}
L. Paris and B. Szepietowski, \emph{A presentation for the mapping class group of a nonorientable surface}, Bull. Soc. Math. France \textbf{143} (2015), no. 3, 503--566.

\bibitem{Stukow1} 
M. Stukow, \emph{Dehn twists on nonorientable surfaces}, Fund. Math. \textbf{189} (2006), 117--147.

\bibitem{Stukow2} 
M. Stukow, \emph{Commensurability of geometric subgroups of mapping class groups}, Geom. Dedicata \textbf{143} (2009), 117--142. 
 
\bibitem{Stukow3}
M. Stukow, \emph{A finite presentation for the mapping class group of a nonorientable surface with Dehn twists and one crosscap slide as generators}, J. Pure Appl. Algebra \textbf{218} (2014), no. 12, 2226--2239.

\bibitem{Szepietowski1} 
B. Szepietowski. \emph{Crosscap slides and the level 2 mapping class group of a nonorientable surface}, Geom. Dedicata \textbf{160} (2012), 169--183.


 

\bibitem{Wajnryb}
B. Wajnryb, \emph{A simple presentation for the mapping class group of an orientable surface}, Israel J. Math. \textbf{45} (1989), 157--174.

\end{thebibliography}
\end{document}